\definecolor{Green}{rgb}{0.0, 0.5, 0.0}
\renewcommand*{\eqref}[1]{%
  \hyperref[{#1}]{\textup{\tagform@{\ref*{#1}}}}%
}
\newtheorem{thm}{Theorem}[section]
\newtheorem{cor}[thm]{Corollary}
\newtheorem{prop}[thm]{Proposition}
\newtheorem{lem}[thm]{Lemma}
\newtheorem{claim}[thm]{Claim}
\theoremstyle{definition}
\newtheorem{defn}[thm]{Definition}
\newtheorem{ex}[thm]{Example}
\newtheorem{convention}[thm]{Convention}
\newtheorem{question}[thm]{Question}
\newtheorem*{ack}{Acknowledgments}
\numberwithin{equation}{section}
\newcommand{\Z}{\mathbb Z}
\newcommand{\N}{\mathbb N}
\newcommand{\kk}{\mathbf k}
\newcommand{\Ss}{\mathbb S}
\newcommand{\JJ}{\mathcal J_0}
\newcommand{\sgn}{\text{sgn}}
\newcommand{\In}{\text{In}}
\newcommand{\Out}{\text{Out}}
\newcommand{\Ve}{\text{Vert}}
\newcommand{\Ed}{\text{Edge}}
\newcommand{\Le}{\text{Leaf}}
\newcommand{\Ro}{\text{Root}}
\newcommand{\Int}{\text{Intern}}
\newcommand{\Det}{\text{Det}}
\newcommand{\mc}{\mathcal }
\newcommand{\bD}{{\bf D}}
\newcommand{\bC}{{\bf C}}
\newcommand{\ii}{{\,|\,}}
\newcommand{\oo}{\bigcirc}
\newcommand{\iii}{{\mathbf i}}
\newcommand{\ooo}{{\mathbf o}}
\title{Koszuality of the $\mc V^{(d)}$ dioperad}
\author[K.~Poirier]{Kate~Poirier}
  \address{Kate Poirier,
  Department of Mathematics, New York City College of Technology, City University of New York, 300 Jay Street, Brooklyn, NY 11201}
  \email{kpoirier@citytech.cuny.edu}
\author[T.~Tradler]{Thomas~Tradler}
  \address{Thomas Tradler,
  Department of Mathematics, New York City College of Technology, City University of New York, 300 Jay Street, Brooklyn, NY 11201}
  \email{ttradler@citytech.cuny.edu}
\date{\today}
\begin{document}
\maketitle

\begin{abstract}
Define a $\mc V^{(d)}$-algebra as an associative algebra with a symmetric and invariant co-inner product of degree $d$. Here, we consider $\mc V^{(d)}$ as a dioperad which includes operations with zero inputs. We show that the quadratic dual of $\mc V^{(d)}$ is $(\mc V^{(d)})^!=\mc V^{(-d)}$ and prove that $\mc V^{(d)}$ is Koszul. We also show that the corresponding properad is not Koszul contractible.
\end{abstract}

\setcounter{tocdepth}{2}
\tableofcontents

\section{Introduction}\label{SEC:Introduction}

In this paper, we study the dioperad $\mc V^{(d)}$. Informally, we may describe $\mc V^{(d)}$ by saying that a $\mc V^{(d)}$-algebra is given by an associative algebra $(A,\cdot )$ together with a symmetric and invariant element $c\in A\otimes A$ of degree $d$. Writing $c=\sum_i c'_{i}\otimes c''_{i}$, the symmetry and invariance conditions are expressed as 
\begin{align}\label{EQN:symmetry-co-ip}
 &\sum_i c'_{i}\otimes c''_{i}= \sum_i (-1)^{|c'_i||c''_i|}\cdot c''_i\otimes c'_i, \quad \text{ and }\\
 &\sum_i (a\cdot c'_{i})\otimes c''_{i}=\sum_i (-1)^{|a|(|c'_{i}|+|c''_{i}|)} \cdot c'_{i}\otimes (c''_{i}\cdot a) \text{ for all }a\in A,
\end{align}
respectively. 

An important example that we have in mind is $A=H^\bullet(X)$, the cohomology ring of an even-dimensional, oriented manifold $X$, with coefficients in a field $\kk$ of characteristic $0$.
Here, the element $c$ in $H^\bullet(X) \otimes H^\bullet(X)$ is obtained from the diagonal $\Delta: X \to X\times X$ as the image of the Thom class in $H^\bullet (X \times X, X \times X - \Delta(X))$ under the composition of restriction $H^\bullet(X \times X, X \times X - \Delta(X)) \to H^\bullet(X \times X)$ and the K\"unneth isomorphism $H^\bullet(X \times X) \stackrel{\sim}{\to} H^\bullet(X) \otimes H^\bullet(X)$.
Note that if $X$ is of dimension $d$, then the element $c$ satisfies $\sum_i c'_{i}\otimes c''_{i}=(-1)^d\cdot \sum_i (-1)^{|c'_i||c''_i|}\cdot c''_i\otimes c'_i$ (see \cite[page 128, Theorems 11.10 and 11.11]{MS}). Thus, if $d$ is even, $c$ satisfies the above relation \eqref{EQN:symmetry-co-ip}. Due to this example, we will also refer to the element $c$ as a {\em co-inner product}.
Note also that the cohomology class $\sum_i c'_i\cdot c''_i$ in $H^d(X)$ is equal to the Euler class of the tangent bundle of $X$.

For concreteness, we follow Gan's definitions and explicit constructions \cite{G}. Strictly speaking, $\mc V^{(d)}$ is not a dioperad as defined by Gan: in \cite{G} algebras over dioperads only have operations $A^{\otimes n}\to A^{\otimes m}$ with $n\ge 1$ inputs and $m\ge 1$ outputs, but $\mc V^{(d)}$ has as a generating operation the co-inner product, which is a $0$-to-$2$ operation. The more general definition of, for example, \cite{KW:FC} covers the case of $\mc V^{(d)}$ but in order to make things explicit and elementary we have chosen to modify Gan's definitions to include zero outputs.
Much of the content of Section \ref{SEC:dioperd0} (defining the cobar dual dioperad $\bD \mc P$ of a dioperad $\mc P$ and stating some of its basic properties) follows along Gan's discussion \cite[Sections 1-3]{G} but putting special emphasis to the case when $0$ inputs are allowed, especially when considering Koszulity of such a dioperad. One technical difference here is to use a convenient shift in the cobar dual dioperad, which allows generators to be of the form $E(1,2)$ (i.e. having $2$ inputs and $1$ output) and $E(2,0)$ (i.e. having $0$ inputs and $2$ outputs), while the relations live in $R(1,3)$ and $R(2,1)$.

In Section \ref{SEC:Vinfty}, we define $\mc V^{(d)}$ as the quadratic dioperad with generators in $E(1,2)$ (i.e. the product) and $E(2,0)$ (i.e. the co-inner product in degree $d$), and with relations in the free dioperad in degrees $\mc F_0(E)(1,3)$ (i.e. associativity) and $\mc F_0(E)(2,1)$ (i.e. invariance). A calculation then shows that $\mc V^{(d)}$ is closely related to its own quadratic dual. This is our first result.
\begin{prop}
The quadratic dual of $\mc V^{(d)}$ is $(\mc V^{(d)})^!=\mc V^{(-d)}$.
\end{prop}
The main result of this paper is that $\mc V^{(d)}$ is Koszul as a dioperad.
\begin{thm}\label{THM:V-Koszul}
$\mc V^{(d)}$ is Koszul; i.e. the map $\bD \mc V^{(-d)}\to \mc V^{(d)}$ is a quasi-isomorphism.
\end{thm}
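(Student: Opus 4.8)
The plan is to establish the homological criterion directly: for every arity $(m,n)$ I will show that the cobar complex $(\bD\mc V^{(-d)})(m,n)$ has homology concentrated in a single homological degree, where it is isomorphic to $\mc V^{(d)}(m,n)$. The first point is that these complexes are finite-dimensional. Counting vertices of a connected genus-$0$ graph built from the product (arity $(1,2)$) and the co-inner product (arity $(2,0)$) shows that any operation in $\mc V^{(d)}(m,n)$ uses exactly $m-1$ co-inner products and $m+n-2$ products; more generally, since dioperadic composition preserves internal weight, every element of $(\bD\mc V^{(-d)})(m,n)$ carries the same weight $2m+n-3$, and the complex is graded by the number of vertices of the cobar tree, running from $1$ up to the all-generator extreme $2m+n-3$. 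Thus Koszulity reduces to acyclicity of an explicit family of bounded complexes away from top degree $2m+n-3$, where the surviving homology should recover $\mc V^{(d)}(m,n)$.

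The second step is a normal-form description of $\mc V^{(d)}(m,n)$, both to identify the expected top homology and to guide the acyclicity argument. Reading the relations as rewriting rules — associativity reassociates products, while invariance slides a product across the two outputs of a co-inner product — I would fix a monomial order on tree monomials and check confluence by resolving the finitely many overlaps (associativity against associativity in arity $(1,4)$, together with the mixed and co-inner-product overlaps, all in low arity). The normal forms have a transparent shape: the $m-1$ co-inner products form a spine carrying the $m$ outputs, and the $n$ inputs enter through associative words attached along that spine in a fixed cyclic order; this yields both $\dim\mc V^{(d)}(m,n)$ and a basis.

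The crucial structural observation driving the homology computation is that two co-inner products can never be composed with one another: the co-inner product has no inputs, so the only dioperadic compositions present are product-with-product (governed by associativity) and product-with-co-inner-product (governed by invariance). Consequently $\mc V^{(d)}$ is built from the associative operad $\mathcal{A}ss$ and the co-inner product through a distributive law whose rewriting content is exactly the invariance relation. I would exploit this by filtering $(\bD\mc V^{(-d)})(m,n)$ so that the de-compositions using invariance, and those splitting off a co-inner-product vertex, drop the filtration, leaving on the associated graded only the product-product part of the cobar differential. The associated graded then splits as a direct sum, over the spine configurations, of tensor products of (shifted) Koszul complexes of $\mathcal{A}ss$, one tensor factor per associative strand; since $\mathcal{A}ss$ is classically Koszul, each such factor, hence each summand by the K\"unneth theorem, is acyclic away from its top degree. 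The spectral sequence of the filtration collapses, giving acyclicity of $(\bD\mc V^{(-d)})(m,n)$ below degree $2m+n-3$ and $H_{2m+n-3}\cong\mc V^{(d)}(m,n)$ via the normal-form basis; this is precisely the asserted quasi-isomorphism.

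The main obstacle is making the reduction to $\mathcal{A}ss$ genuinely clean, and it concentrates at two points. First, one must choose the filtration so that the associated-graded differential is \emph{exactly} the associative Koszul differential, with every invariance contribution sent to strictly lower filtration; this is where the $0$-input nature of the co-inner product — the technical novelty highlighted in the introduction — has to be controlled, since such vertices are sources carrying no incoming strand and behave unlike ordinary operadic inputs under co-composition. Equivalently, one must verify the diamond/distributive-law condition by hand in the few low arities where products and co-inner products first interact. Second, the entire computation must respect the Koszul sign rule while tracking the degree-$d$ shift on the co-inner product, the degree-$(-d)$ shift on its dual, and the suspension built into the cobar construction; a sign slip would be invisible at the level of dimensions but fatal for the differential. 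I expect the verification that the associated-graded differential coincides, signs included, with that of $\mathcal{A}ss$ to be the most delicate part of the argument.
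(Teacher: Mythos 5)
Your first two steps are sound: the weight count ($m-1$ co-inner products and $m+n-2$ products in every monomial of $\mc V^{(d)}(m,n)$) is correct, and your normal-form description via confluent rewriting reproduces the paper's Proposition \ref{PROP:V(m,n)-as-gammas} (the basis $\gamma_{\llbracket\alpha,\ooo,\iii\rrbracket}$ indexed by cyclic orders). The gap is in your third step, and it is not a sign issue but a structural one. There is no distributive law between $\mathcal{A}ss$ and the co-inner product: a distributive law rewrites compositions of one relative order (``$\nu$ after $\mu$'') into the other (``$\mu$ after $\nu$''), but since $\nu$ has zero inputs only one order of composition exists at all, and the invariance relation is a linear relation \emph{among} the compositions $\mu\,_i\circ_j\nu$ of that single type, not a rewriting between two types. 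Concretely, in a genuine distributive-law situation the underlying collection of the composite equals that of the two-layer product, whereas here $\mc F_0(E_{\mc V^{(d)}})(2,1)$ is $4$-dimensional while $\mc V^{(d)}(2,1)$ is $2$-dimensional: $\mc V^{(d)}$ is a proper quotient of the two-layer structure. So the Markl/Gan distributive-law machinery does not apply.

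This defect propagates to your filtration argument and cannot be repaired by a cleverer choice of filtration. Test the plan on $(m,n)=(2,1)$: the complex $\bD\mc V^{(-d)}(2,1)$ is $\kk^2\to\kk^4$ (one-vertex corollas mapping to two-vertex trees), and \emph{every} term of $d^\circ$ there splits the mixed corolla into a $\mu^*$-vertex and a $\nu^*$-vertex --- there is no product--product component whatsoever, yet the homology is concentrated in top degree because this map is injective (dually: $\mc V^{(-d)}(2,1)$ is generated by compositions of generators). If your filtration leaves only the product--product part of $d^\circ$ on the associated graded, then the $E_1$ page in this arity is $\kk^2\oplus\kk^4$, strictly larger than the true homology $\kk^2$, so the spectral sequence cannot collapse as you claim; and if instead the mixed splittings survive to the associated graded, then it is no longer a tensor product of $\mathcal{A}ss$-Koszul complexes and the K\"unneth step is unavailable. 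The same problem occurs in all arities $(m,0)$. The acyclicity genuinely depends on the interplay of associativity with invariance, which is exactly why the paper does not reduce to Koszulity of $\mathcal{A}ss$: it splits $\bD\mc V^{(d)}(m,n)$ into summands indexed by the classes $\llbracket\alpha,\ooo,\iii\rrbracket$ (Claim \ref{CLM:linear-iso}), identifies each summand, up to sign, with the cellular chains of the assocoipahedron $Z_\alpha$ (Claim \ref{CLM:chain-map}), and then invokes the contractibility of $Z_\alpha$ proved in \cite{PT}. Note that $Z_\alpha$ is a nontrivial cellular \emph{subdivision} of (associahedron $\times$ simplex); the simplex factor and the subdivision encode precisely the invariance contributions that your associated graded discards, and its contractibility is a separate geometric theorem, not a formal K\"unneth consequence of associahedron contractibility.
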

The proof of Theorem \ref{THM:V-Koszul} relies on a result of our previous paper \cite{PT}, where we showed that a certain combinatorial cell complex (the ``assocoipahedron'' $Z_\alpha$) is contractible. The proof of Theorem \ref{THM:V-Koszul} follows from the observation that the dg-chain complex obtained by $\bD\mc V^{(-d)}$ is isomorphic to the cellular complex on a disjoint unions of assocoipahedra, which is thus contractible in each connected component.

Due to this result, we can define the dioperad $\mc V^{(d)}_\infty:=\bD (\mc V^{(d)})^!=\bD \mc V^{(-d)}$. Algebras over $\mc V^{(d)}_{\infty}$ are closely related to $V_\infty$-algebras that appeared in \cite{TZ} in the context of string topology type operations. More precisely, when $d$ is even, the concept of a $\mc V^{(d)}_\infty$-algebras coincides with that of a $V_\infty$-algebra, and, when $d$ is odd, the two concepts differ at most by signs. Moreover, it was shown in \cite{TZ} that there is a PROP $\mc{DG}^\bullet_\infty$ consisting of graph complexes of directed graphs, which has an action on the cyclic Hochschild chain complex $CC^\bullet(A)$ of a $V_\infty$-algebra $A$. Thus, we get the following corollary.
\begin{cor}[\cite{TZ}, Theorem 4.3]
If $d$ is even, and $A$ is a $\mc V^{(d)}_\infty$ algebra, then there is a map of PROPs $\mc{DG}^\bullet_\infty\to \mc End(CC^\bullet(A))$.
\end{cor}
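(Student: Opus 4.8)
The plan is to reduce the statement to \cite[Theorem 4.3]{TZ} by establishing a precise dictionary between $\mc V^{(d)}_\infty$-algebras and the $V_\infty$-algebras of \cite{TZ}, valid when $d$ is even. The only genuinely new ingredient needed beyond \cite{TZ} is the Koszulity of $\mc V^{(d)}$ (Theorem \ref{THM:V-Koszul}), which guarantees that $\bD \mc V^{(-d)}=\bD(\mc V^{(d)})^!$ is a resolution of $\mc V^{(d)}$, so that its algebras are exactly the strongly-homotopy $\mc V^{(d)}$-algebras predicted by the quadratic dual. Everything else is then a definition chase.

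First I would unwind the definition of an algebra over the cobar-dual dioperad $\bD \mc V^{(-d)}$. By the machinery recalled in Section \ref{SEC:dioperd0}, such a structure on a graded vector space $A$ is a collection of operations indexed by the generators of $\bD \mc V^{(-d)}$, constrained by the relations encoded in the differential. I would sort these generators into two families: those descending from the product generator in $E(1,2)$, which assemble into an $A_\infty$-structure on $A$, and those descending from the degree-$d$ co-inner-product generator in $E(2,0)$, which assemble into a homotopy-coherent tower of symmetric invariant co-inner products. The point of invoking Theorem \ref{THM:V-Koszul} here is that Koszulity ensures there are no further generators or higher correction terms beyond those governed by $\mc V^{(-d)}$ itself; this is exactly what makes the enumeration of operations finite and explicit.

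Next I would compare this data term-by-term with the defining data of a $V_\infty$-algebra in \cite{TZ}. The generators, arities, and degrees match by construction, so the comparison reduces to checking that the structure relations agree. Here the hypothesis that $d$ is even is essential: the relations of $\mc V^{(d)}_\infty$ carry Koszul signs depending on the parity of $d$, and for even $d$ these coincide with the signs in the $V_\infty$ relations of \cite{TZ}, whereas for odd $d$ they differ (as already noted in the discussion preceding the corollary). This sign-matching is the main technical point and the step I expect to require the most care. It is, however, a bookkeeping verification rather than a conceptual obstacle, and the parity constraint on $d$ isolates precisely the case in which the two sets of signs agree on the nose.

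Finally, having identified $\mc V^{(d)}_\infty$-algebras with $V_\infty$-algebras for even $d$, I would invoke \cite[Theorem 4.3]{TZ} directly: for any $V_\infty$-algebra $A$ the co-inner-product data makes the cyclic Hochschild complex $CC^\bullet(A)$ well-defined and equips it with an action of the graph-complex PROP $\mc{DG}^\bullet_\infty$, that is, a map of PROPs $\mc{DG}^\bullet_\infty\to \mc End(CC^\bullet(A))$. Since our $A$, as a $\mc V^{(d)}_\infty$-algebra with $d$ even, is such a $V_\infty$-algebra, this map is exactly the one asserted, which completes the proof.
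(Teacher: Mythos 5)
Your proposal is correct and follows essentially the same route as the paper: the paper isolates your ``dictionary'' step as Lemma \ref{LEM:V-infty-algebra-comparison}, where the sign bookkeeping you flag is carried out explicitly (a rescaling $\widetilde{\mathfrak v_\alpha}=(-1)^{m(m-1)/2}\mathfrak v_\alpha$ kills the $(m_1-1)(m_2-1)$ sign and a suspension $A[1]$ absorbs the rest, together with negating the grading to match the degree $-1$ differential of \cite{TZ}), and the corollary is then exactly an application of \cite[Theorem 4.3]{TZ}. One minor attribution quibble: the finite, explicit enumeration of generators of $\bD\mc V^{(-d)}$ comes from the computation of $\mc V^{(-d)}(m,n)$ in Proposition \ref{PROP:V(m,n)-as-gammas}, not from Koszulity, which instead justifies viewing $\bD\mc V^{(-d)}$ as a resolution of $\mc V^{(d)}$ and hence the notation $\mc V^{(d)}_\infty$.
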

Due to this result, $\mc V^{(d)}_\infty$ algebras are a natural context in which one might consider chain level string topology type operations for even dimensional manifolds. In fact, one of our goals for our future work is to give a precise comparison with the chain level string topology operations defined in \cite{DPR}.

In Section \ref{SEC:related-structures} we study some concepts related to $\mc V^{(d)}$. In Section \ref{SEC:V-properad}, we consider $\mc V^{(d)}$ as a properad, i.e. we look at the properad $\mc F_{dioperad}^{properad}\mc V^{(d)}$, which is the universal enveloping properad (as defined by Merkulov and Vallette in \cite[Section 5.6.]{MV}) applied to $\mc V^{(d)}$. We check that $\mc F_{dioperad}^{properad}\mc V^{(d)}$ is not contractible. However, we do not know if $\mc F_{dioperad}^{properad}\mc V^{(d)}$ is Koszul as a properad.

In Section \ref{SEC:W-dioperad}, we consider anti-symmetric co-inner products, such as they appear as the Thom class of an odd dimensional, oriented manifold. The corresponding dioperad $\mc W^{(d)}$ is a quotient of $\mc V^{(d)}$. At this point, we do not know whether $\mc W^{(d)}$ is a Koszul dioperad or not.

\begin{ack}
We would like to thank Gabriel Drummond-Cole for useful conversations concerning the topics of this paper.
\end{ack}

\section{Dioperads and Koszul Duality revisited}\label{SEC:dioperd0}

In \cite{G}, Gan defined dioperads and studied Koszulity of dioperads. This notion only allowed for $n\geq 1$ inputs and $m\geq 1$ outputs for each of its space of operations. Since our dioperad $\mc V^{(d)}$ encodes (among others) a co-inner product, which has $0$ inputs and $2$ outputs, we need dioperads to also allow for $0$ inputs. In this section, we describe these dioperads, which allow for such operations. Most of this section is just a rewrite of Gan's \cite[Sections 1-3]{G}. For this reason, we won't give a complete mirror of all the facts stated in \cite{G}, but merely give a minimal self-contained description, with a particular focus to the case of $0$ inputs.

\subsection{Dioperads}

\begin{convention}\label{CONVENTIONS}
In this paper, we work over a field $\kk$ of characteristic $0$. Unless stated otherwise, differential graded (or dg-)vector spaces $(V,d)$ consist of spaces $V=\{V^i\}_{i\in \Z}$ where the differential is of degree $+1$, i.e. $d^i:V^i\to V^{i+1}$. We denote by $V[k]$ the dg-vector space with $(V[k])^i:=V^{i+k}$, i.e. the dg-vector space that shifts $V$ \emph{down} by $k$. Furthermore, we apply the usual Koszul rules for signs, in particular, the dual dg-vector space $(V^*,d^*)$ is given by $(V^*)^i:=(V^{-i})^*=Hom(V^{-i},\kk)$ with $(d^*)^{i}(f)=(-1)^{|f|} f\circ d^{-i-1}$.

Let $\N=\{1,2,3,\dots\}$ be the natural numbers and $\N_0=\N\cup \{0\}$. Let $\Ss_n$ be the symmetric group on $n\in \N_0$ elements; in particular, for $n=0$, $\Ss_0=\{\phi\}$ is a one element set with the unique map $\phi:\varnothing\to \varnothing$ from the empty set to itself. Recall also the block permutation (\cite[Definition 1.2]{MSS}): for $m=m_1+\dots+m_n$ with $m_1,\dots, m_n\in \N$, and $\sigma \in \Ss_n$, the block permutation $\sigma_{m_1,\dots, m_n}\in \Ss_m$ permutes blocks of size $m_1,\dots, m_n$ just as $\sigma$ permutes $1,\dots, n$.

 For $\sigma \in \Ss_n$, denote by $\sgn(\sigma)\in \{+1,-1\}$ the sign of the permutation, where $\sgn(\phi)=+1$ for $\phi\in \Ss_0$. If $\sigma_1\in \Ss_{n_1}, \sigma_2\in \Ss_{n_2}$ with $n_1,n_2\in \N$, and $i\in \{1,\dots, n_1\}$, define $\sigma_1\circ_i \sigma_2\in \Ss_{n_1+n_2-1}$ to be
\[
\sigma_1\circ_i \sigma_2:=(\sigma_1)_{1,\dots,1, n_2,1,\dots, 1}\circ (id\times \dots\times id\times \sigma_2\times id\times \dots\times id),
\]
where $(\sigma_1)_{1,\dots,1, n_2,1,\dots, 1}$ is the block permutation. Furthermore, for $\phi\in \Ss_0$, define $\sigma_1\circ_i \phi\in \Ss_{n_1-1}$ to be
\[
\sigma_1\circ_i \phi := \bar\rho_{\sigma_1(i)} \circ \sigma_1\circ \rho_i,
\]
where $\rho_j:\{1,\dots, n_1-1\}\to \{1,\dots, n_1\}, \rho_j(k)=\left\{\begin{array}{ll}k,&\text{for }k< j\\ k+1, &\text{for }k\geq j\end{array}\right.$, and $\bar\rho_j:\{1,\dots, n_1\}\to \{1,\dots, n_1-1\}, \bar\rho_j(k)=\left\{\begin{array}{ll}k, &\text{for }k< j\\ k-1, &\text{for }k\geq j\end{array}\right.$. Note, that for all $\sigma_1\in \Ss_{n_1}, \sigma_2\in \Ss_{n_2}$ with $n_1\in \N$, $n_2\in \N_0$, we have
\[
\sgn(\sigma_1\circ_i\sigma_2)=\sgn(\sigma_1)\sgn(\sigma_2)(-1)^{(i-\sigma_1(i))(n_2-1)}.
\]
\end{convention}

We now recall the concept of a dioperad.
\begin{defn}\label{DEF:dioperad0}
Let $\JJ$ be the indexing set for the outputs and inputs of our space of operations; more precisely, we set $\JJ=\N\times \N_0-\{(1,0),(1,1)\}$.

A dioperad consists of 
\begin{itemize}
\item
dg-vector spaces $\mc P(m,n)$ for each $(m,n)\in \JJ$ which are $(\Ss_m,\Ss_n)$-bimodules,
\item
dg-morphisms $_i\circ _j:\mc P(m_1,n_1)\otimes \mc P(m_2,n_2)\to \mc P(m_1+m_2-1,n_1+n_2-1)$, called compositions, of degree $0$ for each $(m_1,n_1), (m_2,n_2)\in \JJ$ with $n_1\neq 0$ and $1\leq i \leq n_1, 1\leq j \leq m_2$,
\end{itemize}
subject to the conditions (a), (b), and (c) below. (For ease of presentation, we do not require a unit morphism, so that a more accurate name for this structure might have been a ``pseudo-dioperad.'')

We require the following (cf. \cite[1.1]{G}):
\begin{enumerate}
\item[(a)] for $(m_1,n_1),(m_2,n_2), (m_3,n_3)\in \JJ$ with $n_1\neq 0$, $n_1+n_2-1\neq 0$, $1\leq i\leq n_1+n_2-1$, $1\leq j\leq m_3$, $1\leq k\leq n_1$, $1\leq l\leq m_2$, there is an equality
\[
_i\circ_j(_k\circ_l\otimes id)=\left\{
\begin{array}{ll}
(\sigma,1)(_{k+n_3-1}\circ_{l})(_i\circ_j\otimes id)(id\otimes \tau) & \text{for }i\leq k-1 \\ 
_{k}\circ_{j+l-1}(id\otimes _{i-k+1}\circ_{j}) & \text{for }k\leq i\leq k+n_2-1 \\ 
(\sigma,1)(_{k}\circ_{l})(_{i-n_2+1}\circ_{j}\otimes id)(id\otimes \tau) & \text{for }k+n_2\leq i,
\end{array}\right.
\]
as morphisms $\mc P(m_1,n_1)\otimes \mc P(m_2, n_2)\otimes \mc P(m_3, n_3)\to \mc P(m_1+m_2+m_3-2,n_1+n_2+n_3-2)$, where $\tau:\mc P(m_2,n_2)\otimes \mc P(m_3, n_3)\to P(m_3,n_3)\otimes \mc P(m_2, n_2)$ is the symmetry isomorphism, and $\sigma\in \Ss_{m_1+m_2+m_3-2}$ is the block permutation $\sigma=((12)(45))_{l-1,j-1,m_1,m_3-j,m_2-l}$.
\item[(b)] for $(m_1,n_1),(m_2,n_2), (m_3,n_3)\in \JJ$ with $n_1\neq 0$ and $n_2\neq 0$, $1\leq i\leq n_1$, $1\leq j\leq m_2+m_3-1$, $1\leq k\leq n_2$, $1\leq l\leq m_3$, there is an equality
\[
_i\circ_j(id\otimes_k\circ_l)=\left\{
\begin{array}{ll}
(1,\sigma)(_{k}\circ_{l+m_1-1})(id\otimes _i\circ_j)(\tau\otimes id) & \text{for }j\leq l-1 \\ 
_{k+i-1}\circ_{l}(_{i}\circ_{j-l+1}\otimes id) & \text{for }l\leq j\leq l+m_2-1 \\ 
(1,\sigma)(_{k}\circ_{l})(id\otimes _{i}\circ_{j-m_2+1})(\tau\otimes id) & \text{for }l+m_2\leq j,
\end{array}\right.
\]
as morphisms $\mc P(m_1,n_1)\otimes \mc P(m_2, n_2)\otimes \mc P(m_3, n_3)\to \mc P(m_1+m_2+m_3-2,n_1+n_2+n_3-2)$, where $\tau:\mc P(m_1,n_1)\otimes \mc P(m_2, n_2)\to \mc P(m_2,n_2)\otimes \mc P(m_1, n_1)$ is the symmetry isomorphism, and $\sigma\in \Ss_{n_1+n_2+n_3-2}$ is the block permutation $\sigma=\left\{\begin{array}{ll}((12)(45))_{i-1,k-1,n_3,n_2-k,n_1-i} & \text{for }n_3>0 \\ ((12)(34))_{i-1,k-1,n_2-k,n_1-i}  & \text{for }n_3=0 \end{array}\right.$.
\item[(c)] for $(m_1,n_1),(m_2,n_2)\in \JJ$ with $n_1\neq 0$, $1\leq i\leq n_1$, $1\leq j\leq m_2$, $\pi_1\in \Ss_{m_1}$, $\sigma_1\in \Ss_{n_1}$, $\pi_2\in \Ss_{m_2}$, $\sigma_2\in \Ss_{n_2}$, there is an equality 
\[
_i\circ _j ((\pi_1,\sigma_1)\otimes (\pi_2,\sigma_2))=(\pi_2\circ_{\pi_2^{-1}(j)} \pi_1, \sigma_1\circ_i \sigma_2)(_{\sigma_1(i)}\circ_{\pi_2^{-1}(j)})
\]
as morphisms $\mc P(m_1,n_1)\otimes \mc P(m_2, n_2)\to \mc P(m_1+m_2-1,n_1+n_2-1)$.
\end{enumerate}

A morphism of dioperads $f:\mc P\to \mc Q$ consists of maps $f(m,n):\mc P(m,n)\to \mc Q(m,n)$ for all $(m,n)\in \JJ$ compatible with all $_i\circ_j$-compositions and $(\Ss_m,\Ss_n)$-actions. A morphism $f$ is a quasi-isomorphism if each $f(m,n)$ is a quasi-isomorphism.
\end{defn}

Note, that every dioperad $\mc P$ gives a dioperad in the sense of \cite{G} by forgetting the spaces $\mc P(m,0)$.
Conversely, every dioperad $\mc P$ in the sense of \cite{G} (in dg-vector spaces) gives a dioperad by setting $\mc P(m,0)=\{0\}$.

\begin{ex}\label{EX:End-dioperad0}
For a dg-vector space $V$, we define the endomorphism dioperad by $\mc End_V(m,n)=Hom(V^{\otimes n}, V^{\otimes m})$, with composition as in \cite[1.2]{G}: for $f\in\mc End(m_1,n_1), g\in \mc End(m_2,n_2)$ with $n_1\neq 0$, set
\[
f_i\circ_j g=(id\otimes\dots\otimes f\otimes \dots\otimes id)\sigma(id\otimes\dots\otimes g\otimes \dots\otimes id)
\]
with $\sigma=((12)(45))_{i-1,j-1,1,m_2-j,n_1-i}\in \Ss_{n_1+m_2-1}$.
\end{ex}
\begin{defn}
For a dioperad $\mc P$, a $\mc P$-algebra is a dg-vector space $A$, together with a morphism of dioperads $\mc P\to \mc End_A$.
\end{defn}
There are also various ``sign'' dioperads which are of interest.
\begin{ex}\label{EX:sign-dioperad0s}\quad 
\begin{enumerate}
\item
There is a dioperad $\Theta$ given by defining $\Theta(m,n)=\kk[1-n]$ to be a $1$-dimensional space concentrated in degree $n-1$. The $(\Ss_m,\Ss_n)$-action on $\Theta(m,n)$ is given by $(\pi,\sigma).1=\sgn(\sigma)$. The composition $_i\circ _j:\Theta(m_1,n_1)\otimes \Theta(m_2,n_2)\to \Theta(m_1+m_2-1,n_1+n_2-1)$, $1_i\circ _j 1=(-1)^{(i-1)(n_2-1)}$ is of degree $0$ and satisfies the required conditions of a dioperad.\\ 
Dually, $\Theta^{-1}(m,n)=\kk[n-1]$ is the dioperad in degree $1-n$ with a similar action $(\pi,\sigma).1=\sgn(\sigma)$ and composition $1_i\circ _j 1=(-1)^{(i-1)(n_2-1)}$.
\item
There is a dioperad $\Gamma$ given by defining $\Gamma(m,n)=\kk[1-m]$ to be a $1$-dimensional space concentrated in degree $m-1$. The $(\Ss_m,\Ss_n)$-action on $\Gamma(m,n)$ is given by $(\pi,\sigma).1=\sgn(\pi)$. The composition $_i\circ _j:\Gamma(m_1,n_1)\otimes \Gamma(m_2,n_2)\to \Gamma(m_1+m_2-1,n_1+n_2-1)$, $1_i\circ _j 1=(-1)^{(j-1)(m_1-1)}$ is of degree $0$ and satisfies the required conditions of a dioperad.\\
Dually, $\Gamma^{-1}(m,n)=\kk[m-1]$ is the dioperad in degree $1-m$ with a similar action $(\pi,\sigma).1=\sgn(\pi)$ and composition $1_i\circ _j 1=(-1)^{(j-1)(m_1-1)}$.
\item
For two dioperads $\mc P$ and $\mc Q$, their tensor product $(\mc P\otimes \mc Q)(m,n)=\mc P(m,n)\otimes \mc Q(m,n)$ is again a dioperad with $(\pi,\sigma).(p\otimes q)=(\pi,\sigma). p\otimes (\pi,\sigma).q$ and $(p_1\otimes q_1)\,_i\circ_j(p_2\otimes q_2)=(-1)^{|q_1|\cdot |p_2|}(p_1\,_i\circ_j p_2)\otimes (q_1\,_i\circ_j q_2)$.
\item
Define $\Sigma=\Theta\otimes \Gamma^{-1}$. Then, $\Sigma$ is the endomorphism dioperad on the $1$-dimensional vector space $\kk[1]$ concentrated in degree $-1$, i.e. $\Sigma(m,n)=Hom((\kk[1])^{\otimes n}, (\kk[1])^{\otimes m})$ is concentrated in degree $n-m$.
\item\label{ITM:Omega-dioperad0}
The most relevant ``sign'' dioperad for us is $\Omega:=\Theta^{-1}\otimes \Gamma^{-1}\otimes \Gamma^{-1}$. Note that $\Omega(m,n)$ is a 1-dimensional space concentrated in degree $3-n-2m$ with the $(\Ss_m,\Ss_n)$-action $(\pi,\sigma).1=\sgn(\sigma)$. The composition $_i\circ _j:\Omega(m_1,n_1)\otimes \Omega(m_2,n_2)\to \Omega(m_1+m_2-1,n_1+n_2-1)$ is given by $1_i\circ _j 1=(-1)^{(i-1)(n_2-1)+(m_1-1)(m_2-1)}$.
\end{enumerate}
\end{ex}

\subsection{Quadratic dioperads}

\begin{defn}
Let $T$ be a (finite) directed tree whose sets of vertices is denoted by $\Ve(T)$ and sets of edges is denoted by $\Ed(T)$. For a vertex $v\in \Ve(T)$, denote by $\In(v)$ the set of incoming edges, and by $\Out(v)$ the set of outgoing edges. Vertices $v$ with $(|\Out(v)|,|\In(v)|)=(1,0)$ are called leaves, while vertices $v$ with $(|\Out(v)|,|\In(v)|)=(0,1)$ are called roots; all other vertices are called internal vertices. The set of these types of vertices is denoted by $\Le(T)$, $\Ro(T)$, and $\Int(T)$, respecifvely.

Recall the indexing set $\JJ=\N\times \N_0-\{(1,0),(1,1)\}$ from Definition \ref{DEF:dioperad0}. A directed tree $T$ is a called a tree$_0$, if $(|\Ro(T)|,|\Le(T)|)\in \JJ$, and $T$ has at least one internal vertex, and, furthermore, for each internal vertex $v\in \Int(T)$, we have $(|\Out(v)|,|\In(v)|)\in  \JJ$. Note that if $T$ is a tree$_0$ with $|\Le(T)|>0$ and such that for each internal vertex $v$ of $T$, $|\In(v)|>0$, then $T$ is a reduced tree in the sense of \cite[2.1]{G}.

For $(m,n)\in \JJ$, an $(m,n)$-tree$_0$ is a tree$_0$ $T$ whose roots are labeled by $\{1,\dots,m\}$, and, when $n>0$, whose leaves are labeled by $\{1,\dots,n\}$. Note, that the maximal number of internal vertices in an $(m,n)$-tree$_0$ is $n+2m-3$; in this case, we call an $(m,n)$-tree$_0$ maximally expanded. A maximally expanded tree$_0$ $T$ has all internal vertices $v\in \Int(T)$ of the form $(|\Out(v)|,|\In(v)|)=(1,2)$ or $(|\Out(v)|,|\In(v)|)=(2,0)$. Furthermore, a general $(m,n)$-tree$_0$ $T$ satisfies
\[
\sum_{v\in \Int(T)} \Big(|\In(v)|+2\cdot |\Out(v)|-3\Big) = n+ 2m-3.
\]
\begin{figure}[h]
\[
 \includegraphics[scale=1]{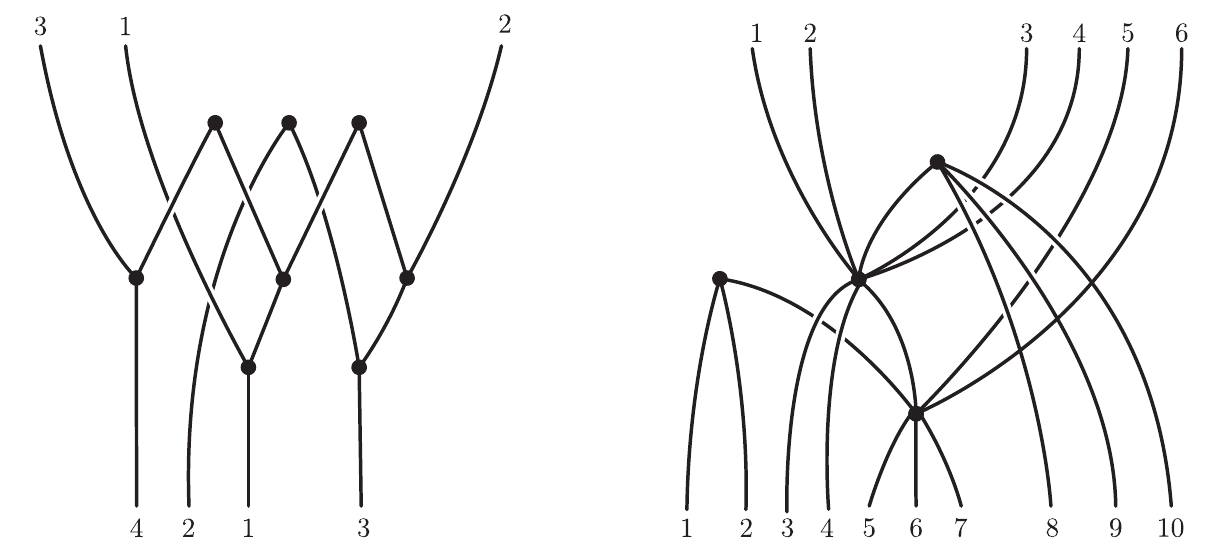} 
\]
\caption{A maximally expanded $(4,3)$-tree$_0$ (left), and a non-maximally expanded $(10,6)$-tree$_0$ (right)}
\end{figure}
\end{defn}

\begin{defn}\label{DEF:quadratic-diop0}
Let $E=\{E(m,n)\}_{(m,n)\in \JJ}$ be a collection of dg-vector spaces, which are $(\Ss_m,\Ss_n)$-bimodules. $E$ can be extended to apply to any two finite sets $X$ and $Y$ in the usual way (see e.g. \cite[Section 1.7]{MSS}) by setting $E(X,Y):= \Bigg( \bigoplus\limits_{ \tiny\begin{array}{c} \text{bijections}\\  f:X\stackrel \sim\to \{1,\dots, m\}\\  g:Y\stackrel \sim\to \{1,\dots, n\}\end{array}} E(m,n)\Bigg)_{(\Ss_m,\Ss_n)}$ for $m=|X|$ and $n=|Y|$. Then, for a tree$_0$ $T$, define, as usual,
\[
E(T)=\bigotimes_{v\in \Int(T)} E(\Out(v),\In(v))
\]
and the free dioperad $\mc F_0(E)$ generated by $E$ to be given by the colimit (cf. \cite[Definition 1.77]{MSS})
\[
\mc F_0(E)(m,n)=\underset{T\in {\bf Iso}((m,n)\text{-tree}_0)}{\text{colim}} E(T),
\]
where ${\bf Iso}((m,n)\text{-tree}_0)$ is the category of $(m,n)\text{-tree}_0$s all of whose morphisms are isomorphisms. Picking a representative $T$ of each isomorphism class of $(m,n)\text{-tree}_0$s, this can also be written as a direct sum over these chosen representatives, $\mc F_0(E)(m,n)=\bigoplus_T E(T)$; cf. \cite[Remark 1.84]{MSS}.

The composition $_i\circ_j$ is given by concatenation of tree$_0$s; the $(\Ss_m,\Ss_n)$-bimodule structure is given by changing the labels of the leaves and roots of the tree$_0$s. The differentials on the $E(m,n)$ induce a differential $d$ on $\mc F_0(E)(m,n)$ so that both composition and symmetric group actions are degree $0$ chain maps. Note that if $E(m,0)=\{0\}$ for all $m\geq 2$, then $\mc F_0(E)=\mc F(E)$ coincides with the free dioperad $\mc F(E)$ as in Gan \cite[2.2]{G}.

Note, that the inclusion $(E\otimes \Omega)(m,n)\hookrightarrow (\mc F_0(E)\otimes \Omega)(m,n)$ induces an isomorphism of dioperads $\mc F_0(E\otimes \Omega)\stackrel \cong\to \mc F_0(E)\otimes \Omega$.

For a dioperad $\mc P$, an ideal $\mc I$ in $\mc P$ is a collection of $(\Ss_m,\Ss_n)$-sub-bimodules $\mc I(m,n)\subset \mc P(m,n)$ such that for $(m_1,n_1),(m_2,n_2)\in \JJ$ with $n_1\neq 0$, $1\leq i \leq n_1$, $1\leq j\leq m_2$, and for any $f\in \mc I(m_1,n_1), g\in \mc P(m_2,n_2)$ or $f\in \mc P(m_1,n_1), g\in \mc I(m_2,n_2)$, we have $f_i\circ_j g\in \mc I(m_1+m+2-1,n_1+n_2-1)$. The quotient $\mc P/\mc I$ of a dioperad $\mc P$ by an ideal $\mc I$ given by $(\mc P/\mc I)(m,n)=\mc P(m,n)/\mc I(m,n)$  is again a dioperad.

Let $E(1,2)$ be a right $\Ss_2$-module and $E(2,0)$ be a left $\Ss_2$-module, which are graded vector spaces possibly in non-zero degree, but with zero differential (cf. co-inner product in Definition \ref{DEF:V-dioperad0}, which is of degree $d$). Assume that all $E(m,n)=\{0\}$ for all $(m,n)\in \JJ-\{(1,2),(2,0)\}$. Note, that in the free dioperad $\mc F_0(E)$ trees with two internal vertices are only of the form $\mc F_0(E)(1,3)$ and $\mc F_0(E)(2,1)$ which have induced left-$\Ss_3$ action, and right-$\Ss_2$-action, respectively.
\[
\begin{array}{rcl}
\mc F_0(E)(1,3) & = & \text{Ind}^{\Ss_3}_{\Ss_2}(E(1,2)\otimes E(1,2)), \\
\mc F_0(E)(2,1) & = & \text{Ind}^{\Ss_2}_{\{1\}}(E(1,2)\otimes E(2,0)).
\end{array}
\]
Let $(R)$ be the ideal in $\mc F_0(E)$ generated by a right $\Ss_3$-submodule $R(1,3)\subset \mc F_0(E)(1,3)$ and a left $\Ss_2$-submodule $R(2,1)\subset \mc F_0(E)(2,1)$. In this case, we denote the quotient $\langle E;R\rangle:=\mc F_0(E)/(R)$ a quadratic dioperad with generators $E$ and relations generated by $R$. Any dioperad of the form $
\mc P=\langle E;R\rangle$ is called a quadratic dioperad.

If $\mc P=\langle E; R\rangle$ is a quadratic dioperad, then the quadratic dual is $\mc P^!=\langle E^\vee;R^\perp\rangle$ where, for $(m,n)=(1,2),(2,0)$, we set $E^\vee(m,n):=E(m,n)^*[-1]\otimes \Omega(m,n)\cong E(m,n)^*\otimes Sgn_n$ and $R^\perp\subset \mc F_0(E^\vee)(2,1)\oplus \mc F_0(E^\vee)(1,3)$ is the orthogonal complement of $R\subset \mc F_0(E)(2,1)\oplus \mc F_0(E)(1,3)$, where we use the identifications
\[
\mc F_0(E^\vee)=\mc F_0(E^*[-1]\otimes \Omega)\cong\mc F_0(E^*[-1])\otimes \Omega\cong \bigoplus_T (\mc F_0(E))^*\otimes  \bigwedge\,^{|\Int(T)|}(\kk[-1])^{\Int(T)} \otimes \Omega.
\]
Note, in particular, that $\mc P^{!!}=\mc P$.
\end{defn}

\subsection{Koszul Duality}

\begin{defn}
Consider a dioperad $\mc P$ so that each $\mc P(m,n)$ is finite dimensional. The $(\Ss_m,\Ss_n)$-bimodules $\mc P(m,n)$ dualize to $(\Ss_m,\Ss_n)$-bimodules $\mc P(m,n)^*=Hom(\mc P(m,n),\kk)$ with the dual representation, i.e. $((\pi,\sigma).f)(x)=f((\pi^{-1},\sigma^{-1}).x)$ for $f\in \mc P(m,n)^*, x\in \mc P(m,n)$. Shifting these modules up by one gives $(\Ss_m,\Ss_n)$-bimodules $\mc P(m,n)^*[-1]$, so that we may take the free dioperad $\mc F_0(\mc P^*[-1])$ on the collection $\mc P^*[-1]=\{\mc P(m,n)^*[-1]\}_{(m,n)\in \JJ}$. Note, that we may factor the shifts ``$[-1]$'' via the identity $(\mc P^*[-1])(T)\cong \mc P^*(T)\otimes \bigwedge^{|\Int(T)|}(\kk[-1])^{\Int(T)}$, where the second factor is the top exterior power on the space of internal vertices of $T$ each of which being in degree $1$.

Next, we define cobar complex $\bC \mc P$ to be $\mc F_0(\mc P^*[-1])$ with the same composition and $(\Ss_m,\Ss_n)$-bimodule structure as $\mc F_0(\mc P^*[-1])$, but whose differential has one more component $\delta=d+d^\circ$. To describe $d^\circ$, dualize the compositions $_i\circ _j:\mc P(m_1,n_1)\otimes \mc P(m_2,n_2)\to \mc P(m_1+m_2-1, n_1+ n_2-1)$ to obtain cocompositions $_i\Delta _j:\mc P(m_1+m_2-1, n_1+n_2-1)^*\to \mc P(m_1,n_1)^*\otimes \mc P(m_2,n_2)^*$ which are chain maps (with respect to $d^*$) of degree $0$. Furthermore, there is a degree $1$ map on the $\bigwedge^{|\Int(T)|}(\kk[-1])^{\Int(T)}$-components of  $(\mc P^*[-1])(T)$ given by replacing a vertex in a tree$_0$ by $v\wedge w$, where $v$ and $w$ are the new vertices obtained from the cocomposition, such that, by convention, $v$ has an edge outgoing into $w$. Putting these maps together induces a differential $d^\circ:\mc F_0(\mc P^*[-1])\to \mc F_0(\mc P^*[-1])$ of degree $1$ so that $(d^\circ)^2=0$ and $dd^\circ+d^\circ d=0$, and thus $\delta^2=0$. Note further, that $d^\circ$ commutes with the composition in $\mc F_0(\mc P^*[-1])$ (concatenation of trees), as well as the $(\Ss_m,\Ss_n)$-action in $\mc F_0(\mc P^*[-1])$ (relabeling of the leaves and roots).

With this, we define the cobar dual dioperad $\bD \mc P$ as the tensor product of $\bC \mc P$ with  $\Omega$ from Example \ref{EX:sign-dioperad0s}\eqref{ITM:Omega-dioperad0}
\[
\bD \mc P :=\bC \mc P \otimes \Omega,
\]
As a dg-vector space, $\bD \mc P(m,n)=\bC \mc P(m,n) \otimes \Omega(m,n)\cong \mc F_0(\mc P^*[-1])(m,n)\otimes \kk[n+2m-3]$\label{PAGE:DP(m,n)} is a direct sum over all $(m,n)$-tree$_0$s $T$ of the spaces $\mc P^*(T)\otimes \bigwedge^{|\Int(T)|}(\kk[-1])^{\Int(T)}\otimes \kk[n+2m-3]$. We will denote the latter factors as $\Det(T):=\bigwedge^{|\Int(T)|}(\kk[-1])^{\Int(T)}\otimes \kk[n+2m-3]$, which is a one-dimensional space concentrated in degree $3+|\Int(T)|-n-2m$, where the number $|\Int(T)|$ of internal vertices in an $(m,n)$-tree$_0$ can range from $1$ to $n+2m-3$. While the differential $d$ only increases the degree in $\mc P^*(T)$, the differential $d^\circ$ increases the number of internal vertices:
\begin{equation}\label{EQN:DP-complex}
\bigoplus_{|\Int(T)|=1} \mc P^*(T)\otimes \Det(T)\stackrel{d^\circ}\to \bigoplus_{|\Int(T)|=2} \mc P^*(T)\otimes \Det(T)\stackrel{d^\circ}\to \dots \stackrel{d^\circ}\to \bigoplus_{|\Int(T)|=n+2m-3} \mc P^*(T)\otimes \Det(T)
\end{equation}
Thus, $\bD\mc P=(\bD\mc P)^{\bullet,\bullet}$ is bigraded with internal degree in $\mc P^*$ (raised by $d$) and vertex degree (raised by $d^\circ$) ranging from $4-n-2m$ to $0$ in the above complex.
\end{defn}
We have the following standard facts.
\begin{prop}
If $\mc P\to \mc Q$ is a quasi-isomorphism of (finite dimensional) dioperads, then there is an induced quasi-isomorphism $\bD\mc Q\to \bD\mc P$.
\end{prop}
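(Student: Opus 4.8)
The plan is to build the comparison map directly from the quasi-isomorphism $f:\mc P\to \mc Q$ by dualizing and applying the free-dioperad functor, and then to prove that it is a quasi-isomorphism by filtering $\bD\mc P$ and $\bD\mc Q$ by the number of internal vertices and invoking K\"unneth on the associated graded.

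First I would construct the map. Since each $f(m,n):\mc P(m,n)\to \mc Q(m,n)$ is a quasi-isomorphism of finite-dimensional dg-vector spaces, its dual $f(m,n)^*:\mc Q(m,n)^*\to \mc P(m,n)^*$ is again a quasi-isomorphism (dualization over a field is exact), and hence so is the shift $f^*[-1]$. Because $f$ is a morphism of dioperads, it commutes with the compositions $_i\circ_j$, so $f^*$ intertwines the cocompositions $_i\Delta_j$ obtained by dualizing them. Applying the covariant functor $\mc F_0(-)$ to $f^*[-1]:\mc Q^*[-1]\to \mc P^*[-1]$ therefore yields a morphism $\bC\mc Q=\mc F_0(\mc Q^*[-1])\to \mc F_0(\mc P^*[-1])=\bC\mc P$ which commutes with concatenation of tree$_0$s and with the $d^\circ$-component of the differential (assembled from the cocompositions together with the degree-$1$ wedge map on the $\Det(T)$-factors), while it commutes with $d$ because $f^*$ is a chain map. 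Tensoring with $\Omega$ produces the desired chain map $\bD\mc Q\to \bD\mc P$, which on each tree $T$ is the map $\mc Q^*(T)\otimes \Det(T)\to \mc P^*(T)\otimes \Det(T)$; in particular it preserves the tree and the number of internal vertices.

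To prove it is a quasi-isomorphism in each arity $(m,n)$, I would filter $\bD\mc P(m,n)=\bigoplus_T \mc P^*(T)\otimes \Det(T)$ by the number $|\Int(T)|$ of internal vertices, which ranges over the finite set $\{1,\dots,n+2m-3\}$, so the filtration is bounded and the comparison map is strictly filtration-preserving. In the associated graded the only surviving part of $\delta=d+d^\circ$ is the internal differential $d$, since $d^\circ$ strictly raises $|\Int(T)|$; thus the $E_0$-term splits as the direct sum over isomorphism classes of $(m,n)$-tree$_0$s $T$ of the complexes $\mc P^*(T)=\bigotimes_{v\in\Int(T)}\mc P(\Out(v),\In(v))^*$, each tensored with the fixed one-dimensional space $\Det(T)$. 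Since each $f(m,n)^*$ is a quasi-isomorphism and we work over a field of characteristic $0$, the K\"unneth theorem shows that $\mc Q^*(T)\to \mc P^*(T)$ is a quasi-isomorphism for every $T$; hence the comparison map is a quasi-isomorphism on $E_0$ and therefore an isomorphism on $E_1$. As the filtration is bounded, the spectral sequences converge strongly, and the comparison theorem for spectral sequences upgrades the $E_1$-isomorphism to an isomorphism on the homology of the total complexes, giving that $\bD\mc Q(m,n)\to \bD\mc P(m,n)$ is a quasi-isomorphism for all $(m,n)\in\JJ$.

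The part that requires genuine care, rather than formal manipulation, is the verification in the second paragraph that the induced map really is a chain map for $d^\circ$: one must check that dualizing the dioperad-morphism relation $f(_i\circ_j(-)) = {}_i\circ_j(f\otimes f)$ converts compatibility with compositions into compatibility with the cocompositions $_i\Delta_j$, and that this is consistent with the degree-$1$ wedge operation on the $\bigwedge^{|\Int(T)|}(\kk[-1])^{\Int(T)}$-factors, including the Koszul signs introduced by the shifts $[-1]$ and by $\Omega$. Once this sign bookkeeping is settled, the remainder — the boundedness of the filtration, the K\"unneth computation of $E_0$, and the comparison theorem — is routine.
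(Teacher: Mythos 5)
Your proposal is correct and takes essentially the same approach as the paper: the paper's proof simply notes that finite-dimensionality makes each $\mc Q^*(T)\to \mc P^*(T)$ a quasi-isomorphism and defers the conclusion to the standard references (Ginzburg--Kapranov and Markl--Shnider--Stasheff), where the argument is precisely the vertex-count filtration you describe. Your write-up just makes explicit what the paper delegates to those citations — the compatibility of the dualized map with $d^\circ$, the K\"unneth identification of the associated graded, and the bounded-filtration comparison.
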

\begin{proof}
This follows just as in \cite[Theorem (3.2.7)(b)]{GK} or \cite[Corollary 3.13]{MSS}. If $\mc P(m,n)$ and $\mc Q(m,n)$ are finite dimensional, the maps maps $\mc Q^*(T)\to \mc P^*(T)$ are quasi-isomorphic, and thus so is the induced map $\bD\mc Q\to \bD\mc P$.
\end{proof}
\begin{prop}
There is a quasi-isomorphism $\bD\bD \mc P\to \mc P$.
\end{prop}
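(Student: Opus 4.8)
The plan is to exhibit the counit $\epsilon\colon\bD\bD\mc P\to\mc P$ of the (dioperadic) bar--cobar construction and to prove it is a quasi-isomorphism by a filtration argument whose associated graded factors, for each underlying tree, into a tensor product of acyclic two-term complexes; this is the analogue for dioperads with $0$ inputs of the cobar--bar quasi-isomorphism \cite[Theorem (3.2.7)(a)]{GK}, \cite[Theorem 3.13]{MSS}. As in the preceding proposition I assume each $\mc P(m,n)$ is finite dimensional, so that $\bD\mc P(m,n)$ is again finite dimensional and $\bD\bD\mc P$ is defined. Since $\bD\bD\mc P\cong\mc F_0\big((\bD\mc P)^*[-1]\otimes\Omega\big)$ is quasi-free, a morphism of dioperads out of it is determined by its values on the generators $(\bD\mc P)^*[-1]\otimes\Omega$; I define $\epsilon$ on generators to be the canonical isomorphism $\mc P(m,n)\cong\mc P(m,n)$ on the corolla summand (where the two shifts cancel after dualizing and $\Omega(m,n)^*\otimes\Omega(m,n)\cong\kk$) and zero on all $T$ with $|\Int(T)|\ge 2$. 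Freely extended, $\epsilon$ is the total dioperadic composition $\mc P(R)\to\mc P(m,n)$ on each summand whose inner trees are all corollas, and zero otherwise.

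Next I would record the combinatorics. A summand of $\bD\bD\mc P(m,n)$ is a two-level object: an outer $(m,n)$-tree$_0$ $S$ together with an inner tree$_0$ $T_w$ at each $w\in\Int(S)$, decorated by $\mc P(R)=\bigotimes_{v\in\Int(R)}\mc P(\Out(v),\In(v))$ over the grafted global tree$_0$ $R=S\{T_w\}_w$ and twisted by the $\Det$ and $\Omega$ factors of \eqref{EQN:DP-complex}. Because the internal vertices of a tree$_0$ span a connected subtree, the summands with fixed $R$ correspond bijectively to subsets $F$ of the $|\Int(R)|-1$ internal edges of $R$, where cutting $F$ produces the blocks $\Int(T_w)$. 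The differential $\delta$ has two relevant components: the outer cobar differential $d^\circ$ enlarges $F$ by one edge and hence preserves $R$, whereas the component dual to the internal differential of the coefficient dioperad $\bD\mc P$ merges two adjacent internal vertices and so passes to a global tree with one fewer internal vertex.

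The core step is to filter $\bD\bD\mc P(m,n)$ by $|\Int(R)|$. The merging component strictly lowers this degree, so the induced differential on the associated graded is $d^\circ$ together with the internal differential of $\mc P$, and the complex splits as a direct sum over global trees$_0$ $R$. For fixed $R$ the decoration $\mc P(R)$ is carried along unchanged by $d^\circ$, which merely adjoins an edge to $F$, so this complex is isomorphic to
\[
\mc P(R)\otimes\bigotimes_{e}\big(\kk\xrightarrow{\ \sim\ }\kk\big),
\]
a tensor product, over the internal edges $e$ of $R$, of acyclic two-term complexes. It is therefore acyclic whenever $R$ has an internal edge, i.e.\ whenever $|\Int(R)|\ge 2$; for the corolla it reduces to $(\mc P(m,n),d)$, on which $\epsilon$ restricts to the identity. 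Passing to the mapping cone of $\epsilon$ and running the same filtration shows the cone is acyclic, so $\epsilon$ is a quasi-isomorphism.

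I expect the main obstacle to be sign bookkeeping, which enters twice. First, $\epsilon$ is a chain map only because the two cobar-type contributions to $\epsilon\circ\delta$ --- contracting an internal edge inside an inner tree versus expanding it into the outer tree --- produce the same composite in $\mc P$ with opposite signs and cancel; matching the $\Det$ and $\Omega$ signs to see this cancellation is delicate. Second, the same signs must be arranged so that each edge factor above is genuinely the acyclic complex $\kk\xrightarrow{\sim}\kk$. Beyond signs, one must check that the identification of $\Int(R)$ as a connected subtree with $|\Int(R)|-1$ internal edges, and the whole filtration argument, survive the presence of $(2,0)$-vertices --- the $0$-input case excluded in \cite{G}. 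Once these points are settled the argument is formally that of \cite{GK,MSS}.
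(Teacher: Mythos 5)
Your proposal is correct and follows essentially the same route as the paper's proof: both decompose $\bD\bD\mc P$ as a sum over the grafted global tree$_0$ with a partition of its internal vertices into blocks, reduce to a fixed global tree, and observe that for a tree with at least one internal edge the $d^\circ$-complex (indexed by subsets of internal edges) is acyclic, leaving only the corolla summands, which recover $(\mc P(m,n),d)$. The only cosmetic difference is that you identify this edge-subset complex as a tensor product of acyclic two-term complexes, whereas the paper identifies it (equivalently, up to dualizing the direction of the differential) as the augmented cellular chain complex of a simplex.
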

\begin{proof}
The proof is standard, and just as in \cite[Proposition 3.3]{G}, going back to \cite[Theorem (3.2.16)]{GK} and described in more detail in \cite[Theorem 3.24]{MSS}. It uses on the fact, that $\bD \bD\mc P$ is given by a direct sum over tree$_0$s $T_v$ placed inside a tree$_0$ $T$ at each internal vertex $v$ of $T$; denote the grafted tree$_0$ by $S=T(T_{v_1},\dots, T_{v_{|\Int(T)|}})$,
\[
\bD\bD\mc P(m,n)= \bigoplus_{(m,n)-\text{tree$_0$s } T}\bigoplus_{\tiny\begin{array}{c}\forall v_i\in \Int(T):\\ (\Out(v_i),\In(v_i))-\text{tree$_0$s } T_{v_i}\end{array}} \mc P(S)\otimes \Det(T)\otimes \bigotimes_{v_i\in \Int(T)} \Det(T_{v_i}).
\]
Then, $(\bD \bD\mc P)^{\bullet,\bullet, \bullet}$ has a triple grading, where the first differential $d$ raises the degree in $\mc P$, the second differential $d_{\bD \mc P}$ collapses an edge in a tree$_0$ $T_v$ (according to the dioperad rules of $\mc P$), and the third differential $d^\circ$ combines two tree$_0$s $T_{v_1}$ and $T_{v_2}$ of two vertices $v_1,v_2$ of $T$ by attaching $T_{v_1}$ to $T_{v_2}$ and thus leaving $S$ invariant. If $S$ has more than one internal vertex, then $\bD \bD\mc P$ with differential $d^\circ$ only changes the way we partition $S$ into subtree$_0$s $T_{v_i}$, and is thus isomorphic to the augmented cellular chains of a simplex, which is acyclic. If $S$ has only one internal vertex, then $d^\circ$ is zero, while $d_{\bD \mc P}$ also vanishes for this case. The argument is then as in \cite{GK} and \cite{MSS}.
\end{proof}

\begin{cor}\label{COR:unique-resolution}
If $\mc P, \mc Q, \mc R$ are dioperads with zero differentials, and there are quasi-isomorphisms $\bD Q\to \mc P$ and $\bD \mc R\to \mc P$, then $\mc Q\cong \mc R$ are isomorphic.
\end{cor}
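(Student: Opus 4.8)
The plan is to reduce the statement to the two preceding propositions—contravariant functoriality of $\bD$ on quasi-isomorphisms, and the existence of the quasi-isomorphism $\bD\bD\mc P\to\mc P$—together with the observation that a quasi-isomorphism \emph{into} a dioperad with zero differential identifies that dioperad with the homology of the source.

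First I would check that $\bD$ may be applied to the given quasi-isomorphism $\bD\mc Q\to\mc P$. For this I need each $\bD\mc Q(m,n)$ to be finite dimensional, which holds because for fixed $(m,n)$ there are only finitely many isomorphism classes of $(m,n)$-tree$_0$s and each summand $\mc Q^*(T)\otimes\Det(T)$ of $\bD\mc Q(m,n)$ is finite dimensional when $\mc Q(m,n)$ is. Applying the functoriality proposition to $\bD\mc Q\to\mc P$ then yields a quasi-isomorphism $\bD\mc P\to\bD\bD\mc Q$, and composing with the quasi-isomorphism $\bD\bD\mc Q\to\mc Q$ gives a quasi-isomorphism $\bD\mc P\to\mc Q$. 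Running the identical argument with $\mc R$ in place of $\mc Q$ produces a quasi-isomorphism $\bD\mc P\to\mc R$.

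Next I would pass to homology. Since the compositions $_i\circ_j$ and the $(\Ss_m,\Ss_n)$-actions of $\bD\mc P$ are degree-$0$ chain maps, the homology $H(\bD\mc P)$ inherits the structure of a dioperad, and any quasi-isomorphism of dg-dioperads induces an isomorphism of dioperads on homology. Because $\mc Q$ has zero differential we have $H(\mc Q)=\mc Q$, so the quasi-isomorphism $\bD\mc P\to\mc Q$ descends to a dioperad isomorphism $H(\bD\mc P)\stackrel{\cong}\to\mc Q$. Likewise $H(\bD\mc P)\stackrel{\cong}\to\mc R$. Composing one isomorphism with the inverse of the other gives the desired isomorphism $\mc Q\cong\mc R$.

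The argument is essentially formal once the two propositions are in hand; the only point requiring care is the first step, namely checking that the finite-dimensionality hypothesis needed to invoke contravariant functoriality of $\bD$ is met for $\bD\mc Q$ and $\bD\mc R$—this is where the standing assumption of this subsection, that the dioperads are finite dimensional in each arity, is used. A secondary point to keep honest is that the isomorphism $H(\bD\mc P)\cong\mc Q$ is genuinely one of dioperads and not merely of bigraded vector spaces; this follows because homology of a dg-dioperad is functorial as a dioperad, so no separate verification of compatibility with compositions is needed beyond the fact that the structure maps are chain maps.
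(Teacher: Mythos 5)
Your proposal is correct and follows essentially the same route as the paper: apply the contravariant functor $\bD$ to both given quasi-isomorphisms, combine with the quasi-isomorphisms $\bD\bD\mc Q\to\mc Q$ and $\bD\bD\mc R\to\mc R$, and conclude by passing to homology, where the zero differentials give $\mc Q=H^\bullet(\mc Q)\cong H^\bullet(\mc R)=\mc R$. Your added checks (finite dimensionality of $\bD\mc Q(m,n)$ so that $\bD$ may be applied again, and that the induced isomorphism is one of dioperads rather than merely of graded vector spaces) are points the paper leaves implicit, but they do not change the argument.
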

\begin{proof}
Applying ``$\bD$'' to $\bD Q\to \mc P\leftarrow \bD \mc R$ induces quasi-isomorphisms $\mc Q\leftarrow\bD\bD Q\to\bD \mc P\leftarrow \bD\bD \mc R\to \mc R$, thus an isomorphism on homology $\mc Q=H^\bullet(\mc Q)\cong H^\bullet(\mc R)=\mc R$.
\end{proof}

\begin{defn}
Let $\mc P$ be a quadratic dioperad. Taking homology in the right-most term of \eqref{EQN:DP-complex} (i.e. in vertex degree $0$), yields precisely the quadratic dual dioperad $\mc P^!\cong H^0(\bD \mc P)$. Thus, there is a map of dioperads $\bD \mc P\to \mc P^!$ by mapping \eqref{EQN:DP-complex} to $\mc P^!$. Now, $\mc P$ is called Koszul, if this map $\bD \mc P\to \mc P^!$ is a quasi-isomorphism, i.e. $H^\bullet(\bD \mc P)$ has all of its homology in $H^0(\bD \mc P)\cong \mc P^!$. By the previous Corollary \ref{COR:unique-resolution}, $\mc P^!$ gives the unique dioperad (unique up to isomorphism) whose cobar dual is quasi-isomorphic to $\mc P$.
\end{defn}

\section{Associative algebras with co-inner products}\label{SEC:Vinfty}

We now examine the dioperad $\mc V^{(d)}$. We show that $(\mc V^{(d)})^!=H^0(\bD \mc V)$ is $(\mc V^{(d)})^!=\mc V^{(-d)}$ and that $\mc V^{(d)}$ is Koszul, i.e. $\bD (\mc V^{(d)})^!\stackrel \sim\to \mc V^{(d)}$ is a quasi-isomorphism. Setting $\mc V^{(d)}_\infty:=\bD \mc V^{(d)}$, we also remark that the notion of $\mc V^{(d)}_\infty$ algebras has already appeared in \cite{TZ}.

\subsection{The $\mc V^{(d)}$ dioperad}

\begin{defn}\label{DEF:V-dioperad0}
Let $d\in \Z$. We define $\mc V^{(d)}$ as the $d$-dimensional quadratic dioperad $\mc V^{(d)}=\langle E_{\mc V^{(d)}},R_{\mc V^{(d)}}\rangle$ generated by $E_{\mc V^{(d)}}(1,2)=\kk\cdot \mu\oplus \kk\cdot \bar \mu$ with right $\Ss_2$-action $\sigma.\mu=\bar \mu$ interchanging $\mu$ and $\bar \mu$, and $E_{\mc V^{(d)}}(2,0)=\kk[-d]\cdot \nu$ be concentrated in degree $d$ with left $\Ss_2$-action $\sigma.\nu=\nu$. The space of relations is spanned by associativity of $\mu$ and invariance of $\nu$,
\[
\mu _1\circ_1\mu=\mu _2\circ_1\mu,\quad\quad \mu _1\circ_2\nu=\mu _2\circ_1 \nu.
\]
\begin{figure}[h]
\[
 \includegraphics[scale=.8]{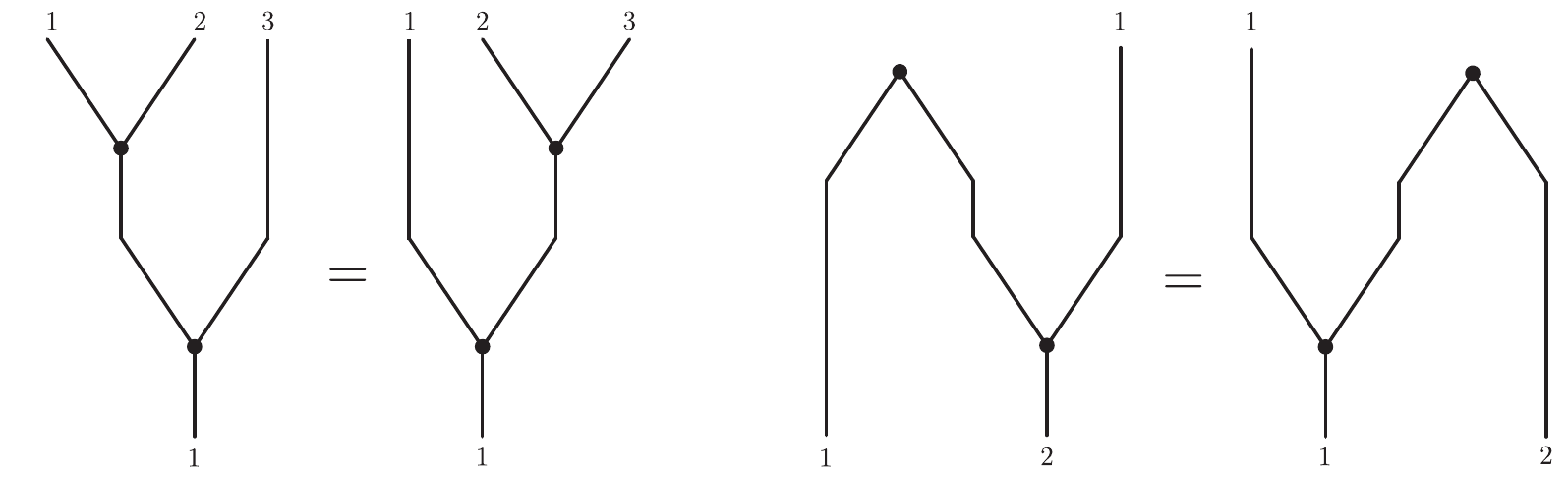} 
\]
\caption{The relations $\mu _1\circ_1\mu=\mu _2\circ_1\mu$ and $\mu _1\circ_2\nu=\mu _2\circ_1 \nu$}\label{FIG:V-relations}
\end{figure}

Recall from Definition \ref{DEF:quadratic-diop0}, that the quadratic dioperad is given by the free dioperad modulo its relations, i.e. $\mc V^{(d)}(m,n)=\mathcal F_0(E_{\mc V^{(d)}})(m,n)/(R_{\mc V^{(d)}})(m,n)$. Here, $\mathcal F_0(E_{\mc V^{(d)}})$ is given by a sum over tree$_0$s with internal vertices only of type $(1,2)$ and $(2,0)$. It will be useful for our purposes to represent the tree$_0$s $T$ of $\mc V^{(d)}$ by planar trees (with a flow from top to bottom) by picking the choice of $\mu$ (instead of $\bar\mu$) in the decoration of the vertices $v\in\Int(T)$ by $E_{\mc V^{(d)}}(\Out(v),\In(v))=\Big( \bigoplus\limits_{ \tiny g:\In(v)\stackrel \sim\to \{1,2\}} E_{\mc V^{(d)}}(1,2)\Big)_{\Ss_2}$ via the convention that the left (respectively right) edge incoming to $v$ is $g^{-1}(1)$ (respectively $g^{-1}(2)$); see e.g. Figure \ref{FIG:V-relations}. 
(Although there appears to be an ambiguity in representing the planar tree concerning $\nu\in E_{\mc V^{(d)}}(2,0)$, we will see in Proposition \ref{PROP:V(m,n)-as-gammas}\eqref{ITM:gamma-cyclic} that this will not matter for our purposes.)
\end{defn}

We first calculate the quadratic dual of $\mc V^{(d)}$.
\begin{prop}
The quadratic dual of $\mc V^{(d)}$ is $(\mc V^{(d)})^!=\mc V^{(-d)}$.
\end{prop}
\begin{proof}
The quadratic dual is generated by $E_{(\mc V^{(d)})^!}(1,2)=\kk\cdot \mu^*\oplus \kk\cdot \bar \mu^*$ with right $\Ss_2$-action $\sigma.\mu^*=-\bar \mu^*$ interchanging $\mu^*$ and $-\bar \mu^*$, and $E_{(\mc V^{(d)})^!}(2,0)=\kk[-d]\cdot \nu^*$ be concentrated in degree $-d$ with trivial left $\Ss_2$-action $\sigma.\nu^*=\nu^*$ (recall that the $(\Ss_m,\Ss_n)$-action on $\Omega(m,n)$ is trivial on $\Ss_m$ and the sign representation on $\Ss_n$). The operation $\mu^*$ satisfies associativity, just as in the usual calculation for the associative operad. It remains to check that we also have the relation $\mu^* \,_1\circ_2\nu^*=\mu^* \,_2\circ_1 \nu^*$ in $(\mc V^{(d)})^!$.

Using that $\mc F_0(E_{\mc V^{(d)}}^\vee)(2,1)\cong \bigoplus_{(2,1)\text{-tree}_0\text{s } T} E_{\mc V^{(d)}}^*(T)\otimes  \bigwedge\,^{|\Int(T)|}(\kk[-1])^{\Int(T)} \otimes \Omega(2,1)$, we can calculate its differential as
\begin{equation}\label{EQN:boundary-(2,1)}
 \includegraphics[scale=.8]{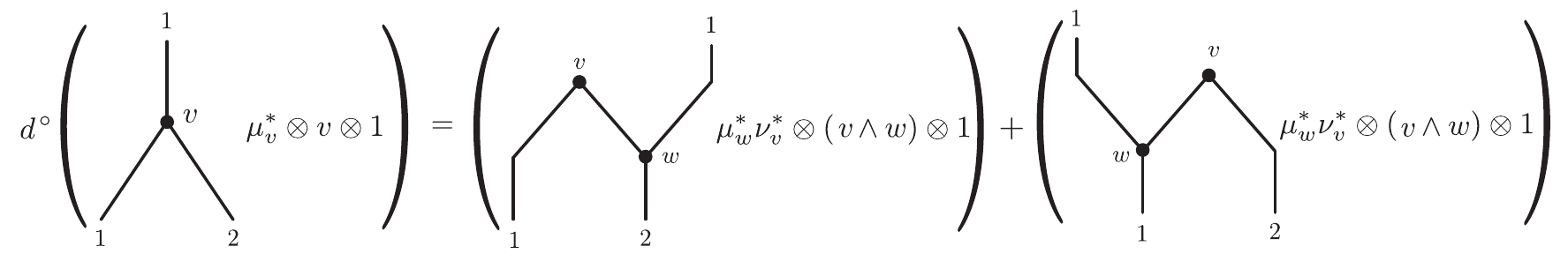} 
\end{equation}
On the other hand, composing operations $\mc F_0(E_{\mc V^{(d)}}^\vee)(1,2)$ and $\mc F_0(E_{\mc V^{(d)}}^\vee)(2,0)$ yields.
\begin{equation}\label{EQN:composition-(2,1)}
 \includegraphics[scale=.8]{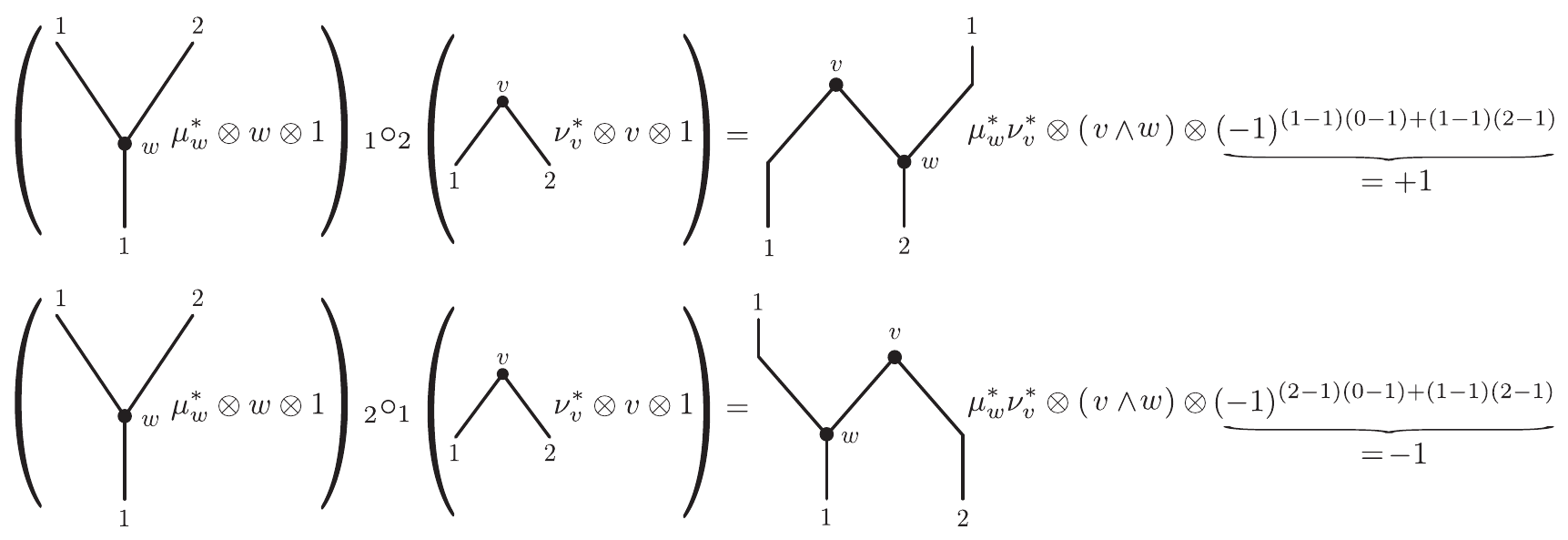} 
\end{equation}
Comparing \eqref{EQN:boundary-(2,1)} with \eqref{EQN:composition-(2,1)}, we see that $\mu^* \,_1\circ_2\nu^*-\mu^* \,_2\circ_1 \nu^*=0$ holds in $H^0(\bD \mc V^{(-d)})$.
\end{proof}

Next, we will explicitly identify the spaces $\mc V^{(d)}(m,n)$. To this end we first define certain ``generating'' operations $\gamma_\alpha$.

\begin{defn}\label{DEF:A(m,n)-gamma}
Let $\alpha:\Z_{n+m}\to \{\oo,\ii\}$ be a map from the cyclic group $\Z_{n+m}$ which maps precisely $m$ elements to the outgoing label ``$\oo$'' and precisely $n$ elements to the incoming label ``$\ii$''. We sometimes write $\alpha$ as a sequence $\alpha=(\alpha_0\dots \alpha_{n+m-1})$, where $\alpha_j:=\alpha(j)$. Now, $\alpha$ is called $(m,n)$-labeled if there is an injection $\ooo:\{1,\dots, m\}\to\Z_{n+m}$ so that the $\alpha_{\ooo(j)}$ are precisely the outgoing labels $\alpha_{\ooo(j)}=\oo$, and there is an injection $\iii:\{1,\dots, n\}\to\Z_{n+m}$ so that the $\alpha_{\iii(j)}$ are precisely the incoming labels $\alpha_{\iii(j)}=\ii$. (Note that $\alpha$ is actually determined by $\ooo$ or $\iii$.) We will need to consider such $(m,n)$-labeled $\alpha$ up to cyclic rotation. More precisely, let $\tau:\Z_{n+m}\to \Z_{n+m}, \tau(p)\equiv p+1$ (mod $n+m$) be the cyclic rotation. We can use $\tau$ to define an equivalence relation on the set of triples $(\alpha, \ooo, \iii)$, by letting it be generated by $(\alpha, \ooo,\iii)\sim(\alpha\circ \tau, \tau^{-1}\circ\ooo, \tau^{-1}\circ \iii)$. The equivalence classes of the induced equivalence relation are denoted by $\llbracket\alpha,\ooo,\iii\rrbracket$. The set of all such equivalence classes is denoted by $\mathfrak A(m,n)=\{\llbracket\alpha,\ooo,\iii\rrbracket: \ooo,\iii\text{ is an $(m,n)$-labeling of } \alpha\}$. 

Let $\alpha:\Z_{n+m}\to \{\oo,\ii\}$  be a sequence of labels as above with $\alpha_{0}=\oo$. Assume that the outgoing labels are precisely $\alpha_{p_1}=\alpha_{p_2}=\dots=\alpha_{p_m}=\oo$ with $0= p_1<p_2<\dots<p_m< n+m$. Denote by $\gamma_\alpha$ the operation $\gamma_\alpha:=(\dots(((\mu_1\circ_1\mu_1 \circ_1\dots _1\circ_1\mu)_{p_{2}} \circ_1\nu)_{p_{3}-1}\circ_1\nu)\dots )_{p_{m}-(m-2)}\circ_1 \nu\in \mc V^{(d)}(m,n)$ depicted in Figure \ref{FIG:gamma-alpha}.
\begin{figure}[h]
\[
 \includegraphics[scale=1]{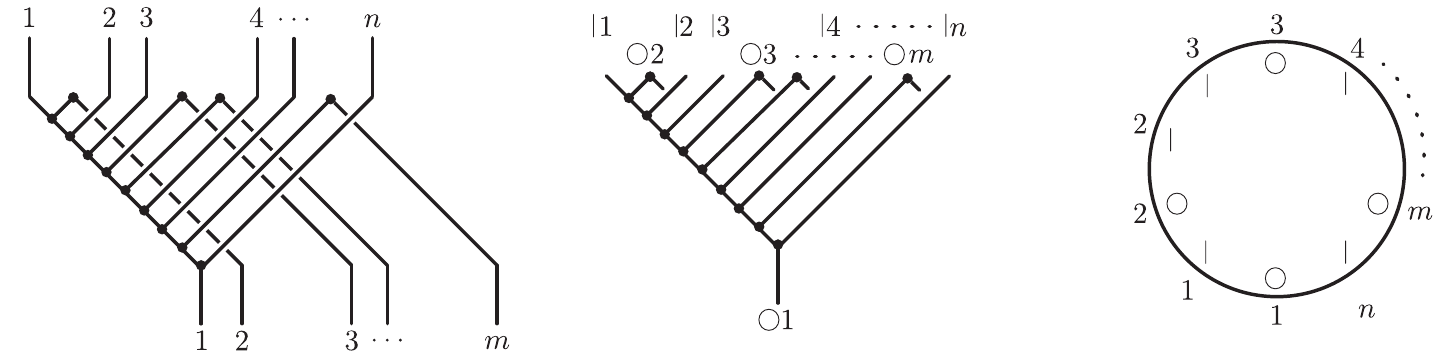} 
\]
\caption{We represent the operation $\gamma_\alpha\in \mc V^{(d)}(m,n)$ for $\alpha=(\oo\ii\oo\ii\ii\oo\oo\ii\ii\oo\ii)$, defined as a composition of $\mu$s and $\nu$s as in the left figure or, more compactly, as in the middle figure. We will see in Proposition \ref{PROP:V(m,n)-as-gammas}(\ref{ITM:V(m,n)=gammas}) that $\gamma_\alpha$ is precisely determined by the class $\llbracket\alpha,\ooo,\iii\rrbracket$, which we will represent as in the right figure.
}\label{FIG:gamma-alpha}
\end{figure}
Note, that $\gamma_\alpha$ has an implied labeling of its roots and leaves, which induces an $(m,n)$-labeling $\ooo_{\alpha}$, $\iii_{\alpha}$ as follows. If the outgoing labels are at the positions $0= p_1<p_2<\dots<p_m< n+m$, then define $\ooo_{\alpha}:\{1,\dots, m\}\to\Z_{n+m}$ by $\ooo_{\alpha}(j)=p_j$. If the incoming labels are at the positions $0< q_1<q_2<\dots<q_n< n+m$, then define $\iii_{\alpha}:\{1,\dots, n\}\to\Z_{n+m}$ by $\iii_{\alpha}(j)=q_j$.

Moreover, applying $\pi\in\Ss_m$ and $\sigma\in\Ss_n$ to $\gamma_\alpha$ from above yields the operation $(\pi,\sigma).\gamma_\alpha$, which is given by the same tree$_0$ as the one for $\gamma_\alpha$, but with a different labeling of its roots and leaves. This labeling can be encoded via the new $(m,n)$-labeling given by $\ooo_{\alpha}\circ\pi^{-1}$ and $\iii_{\alpha}\circ\sigma$. (Indeed, any $\ooo$ and $\iii$ is given by some $\alpha$, $\pi$ and $\sigma$ in this way.)
\end{defn}

We now show that $\mc V^{(d)}$ is essentially given by the operations $(\pi,\sigma).\gamma_\alpha$  from above.
\begin{prop}\label{PROP:V(m,n)-as-gammas} \quad
\begin{enumerate}
\item\label{ITM:comp-gamma}
Every composition of $\mu$s and $\nu$s in $\mc V^{(d)}(m,n)$ is equal to $(\pi,\sigma).\gamma_\alpha$ for some $\alpha:\Z_{n+m}\to \{\oo,\ii\}$ and some $\pi \in \Ss_m$, $\sigma \in \Ss_n$.
\item\label{ITM:gamma-cyclic}
Two $(\pi,\sigma).\gamma_\alpha$ and $(\pi',\sigma').\gamma_{\alpha'}$ are equal in $\mc V^{(d)}(m,n)$ if and only if $(\alpha,\ooo_\alpha\circ \pi^{-1}, \iii_\alpha\circ \sigma)\sim(\alpha',\ooo_{\alpha'}\circ {\pi'}^{-1}, \iii_{\alpha'}\circ \sigma')$. In this case, we denote this element by $\gamma_{\llbracket\alpha,\ooo_\alpha\circ \pi^{-1}, \iii_\alpha\circ \sigma\rrbracket}=(\pi,\sigma).\gamma_\alpha=(\pi',\sigma').\gamma_{\alpha'}$.
\item\label{ITM:V(m,n)=gammas}
The space $\mc V^{(d)}(m,n)$ is isomorphic to $\bigoplus_{\llbracket\alpha,\ooo,\iii\rrbracket\in \mathfrak A(m,n)} \kk[d(1-m)]\cdot \gamma_{\llbracket\alpha,\ooo,\iii\rrbracket}$, which has dimension $|\mathfrak A(m,n)|$ and is concentrated in degree $d(m-1)$.
\end{enumerate}
\end{prop}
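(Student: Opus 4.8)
The plan is to realize $\mc V^{(d)}(m,n)$ as the linearization of a \emph{set}-dioperad and then run a combinatorial normal-form argument. The key structural observation is that $\mc V^{(d)}$ is entirely sign-free: the $\Ss_2$-actions send $\mu\mapsto\bar\mu$ and $\nu\mapsto\nu$ with coefficient $+1$, the compositions are concatenations of tree$_0$s, and both relations $\mu _1\circ_1\mu-\mu _2\circ_1\mu$ and $\mu _1\circ_2\nu-\mu _2\circ_1\nu$ are \emph{binomial}, i.e.\ differences of two monomials with coefficient $+1$. Hence $\mc F_0(E_{\mc V^{(d)}})(m,n)$ carries an honest monomial basis indexed by isomorphism classes of root/leaf-labeled, $\mu/\bar\mu$-decorated $(m,n)$-tree$_0$s; the ideal $(R_{\mc V^{(d)}})(m,n)$ is spanned by such sign-free binomial differences; and therefore $\mc V^{(d)}(m,n)$ has a basis indexed by the equivalence classes of these monomials under the equivalence relation generated by associativity, invariance, and the symmetry of $\nu$. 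In particular \emph{linear independence is automatic}, and the entire proposition collapses to establishing a bijection between these classes and $\mathfrak A(m,n)$. I would record the degree statement of (3) here already: a half-edge count on a connected acyclic tree$_0$ $T$ forces the number of $\nu$-vertices to be $m-1$ and the number of $\mu$-vertices to be $n+m-2$, so every monomial of $\mc V^{(d)}(m,n)$ lies in degree $(m-1)d$, matching the shift $\kk[d(1-m)]$.

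For part (\ref{ITM:comp-gamma}) I would reduce an arbitrary monomial to $\gamma$-form by induction on $|\Int(T)|$. Associativity recombines each chain of directly-chained $\mu$-vertices into a left comb, so the $\mu$-part becomes a union of combs whose inputs are genuine leaves or are capped by outputs of $\nu$-vertices. Invariance, read as the move sliding a $\nu$ from the left input of a $\mu$ to its right input (with the freed outputs matched), together with the symmetry $\sigma.\nu=\nu$ interchanging the two outputs of $\nu$, then pushes every $\nu$ into the standard position of Figure \ref{FIG:gamma-alpha}: one output on the spine, one output a free root. The output of $\gamma_\alpha$ is thus $(\pi,\sigma).\gamma_\alpha$, where $\alpha$ records the cyclic pattern of free roots ($\oo$) and leaves ($\ii$) and $(\pi,\sigma)$ records the induced labeling. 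The delicate case is a $\nu$ whose \emph{both} outputs feed into the $\mu$-structure, joining two separate combs; here invariance must be iterated to migrate one output off the spine and thereby \emph{merge} the combs into the single spine of $\gamma_\alpha$, and I would use the connectivity count above to guarantee this is always possible.

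For part (\ref{ITM:gamma-cyclic}), the ``if'' direction asserts that the generating move of the equivalence relation on $\mathfrak A(m,n)$, the cyclic rotation $\tau$, is realized inside $\mc V^{(d)}$: sliding a $\nu$ along the spine by invariance and then \emph{around the end of the spine} produces exactly $\alpha\mapsto\alpha\circ\tau$ together with $\ooo\mapsto\tau^{-1}\circ\ooo$ and $\iii\mapsto\tau^{-1}\circ\iii$. Iterating shows equivalent data yield equal operations, and this simultaneously proves $\gamma_\alpha$ is well defined, independent of the order of composing the $\nu$s and of the planar representative of $\nu$. For the ``only if'' direction I would introduce the invariant $W(T)\in\mathfrak A(m,n)$ read off as the cyclic sequence of free roots and leaves, with labels, around the planar boundary of $T$; one checks directly that associativity, invariance, and the symmetry of $\nu$ each leave $W(T)$ unchanged, so $W$ descends to equivalence classes, and $W(\gamma_\alpha)=\llbracket\alpha,\ooo_\alpha,\iii_\alpha\rrbracket$ by inspection. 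Combined with part (\ref{ITM:comp-gamma}), $W$ is then a two-sided inverse to $\llbracket\alpha,\ooo,\iii\rrbracket\mapsto\gamma_{\llbracket\alpha,\ooo,\iii\rrbracket}$ on classes.

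Part (\ref{ITM:V(m,n)=gammas}) is then immediate: the bijection from (\ref{ITM:comp-gamma})--(\ref{ITM:gamma-cyclic}) identifies the monomial basis of $\mc V^{(d)}(m,n)$ with $\mathfrak A(m,n)$, yielding dimension $|\mathfrak A(m,n)|$, while the half-edge count gives concentration in degree $d(m-1)$. I expect the genuine obstacle to be the combinatorial heart of (\ref{ITM:comp-gamma}) together with the injectivity half of (\ref{ITM:gamma-cyclic}): namely proving that associativity, invariance, and $\nu$-symmetry \emph{suffice} to connect any two tree$_0$ monomials sharing the same cyclic word $W$ (a confluence/normal-form statement, including the comb-merging step), and verifying that the invariance move near the seam of the spine induces \emph{precisely} the rotation $\tau$ with correct labeling bookkeeping and no spurious identification. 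Everything else---linear independence and the degree---follows formally from the sign-free set-dioperad structure isolated in the first paragraph.
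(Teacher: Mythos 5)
Your proposal follows essentially the same route as the paper's proof. Part (\ref{ITM:comp-gamma}) is the same normal-form induction: the paper fixes a root, splits into the cases where it is the output of a $\mu$ or of a $\nu$, and uses associativity and invariance to peel off the spine of Figure \ref{FIG:gamma-alpha}; your comb-assembly plus $\nu$-migration is that same induction, and your ``delicate case'' of a $\nu$ with both outputs feeding the $\mu$-structure is exactly the paper's second case. Part (\ref{ITM:gamma-cyclic}) is also structurally identical: the ``if'' direction realizes the rotation $\tau$ by sliding a $\nu$ around the end of the spine (the paper's identities \eqref{EQN:gamma-cyclic-last-spot} and \eqref{EQN:gamma-cyclic-mid-spot}), and the ``only if'' direction is the paper's well-definedness argument for the cyclic word read off clockwise around the planar tree, which is precisely your invariant $W$. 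Part (\ref{ITM:V(m,n)=gammas}) then follows in both treatments.

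The one place you go beyond the paper is the opening reduction to a linearized set-dioperad: monomial basis, binomial relations, hence automatic linear independence of the classes. The paper leaves this entirely implicit (its ``only if'' argument silently treats equality in $\mc V^{(d)}(m,n)$ as generated by the moves), so making it explicit is a genuine improvement --- but it is also exactly the point that needs care. The spaces $E(T)$ are \emph{unordered} tensor products of the vertex decorations, and $\nu$ has degree $d$, so with the usual Koszul conventions reordering two $\nu$-decorated vertices costs a sign $(-1)^{d}$. The generating relations are honest binomials, but an ideal element obtained by composing the invariance relation with a spectator $\nu$ matches the $\nu$-vertices of its two monomials by a nontrivial permutation; relative to any fixed monomial basis it therefore carries a relative sign $(-1)^{d}$. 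For even $d$ this is invisible and your argument (like the paper's) is complete; for odd $d$ ``sign-free'' is no longer automatic: for instance in $\mc V^{(d)}(3,0)$ the three invariance moves within one cyclic class form a closed cycle whose sign-monodromy is $(-1)^{3d}$, so for odd $d$ one finds $2\gamma_{(\oo\oo\oo)}$ in the ideal rather than a basis vector. This caveat applies verbatim to the paper's own proof, so it is a shared gloss rather than a gap of yours relative to the paper; but since your write-up makes sign-freeness the load-bearing step, you should either restrict the statement to even $d$ or carry out the Koszul-sign bookkeeping for the $\nu$-reorderings explicitly.
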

\begin{proof}
For item \eqref{ITM:comp-gamma}, let $\rho\in \mc V^{(d)}(m,n)$ be given by generators $\mu$s, $\bar\mu$s, and $\nu$s decorating the vertices of an tree$_0$ $T$ together with a labeling of its roots and leaves. By using the $\Ss_2$-action, we may assume that there are no $\bar\mu$s in $T$, and that $T$ is a planar tree (cf. Definition \ref{DEF:V-dioperad0}). Fix a root $x$ in $T$. Using the relations in $\mc V^{(d)}$, we show that the tree $T$ is equivalent to a tree of the form 
\begin{equation}\label{EQN:tree-iteration}
 \includegraphics[scale=.8]{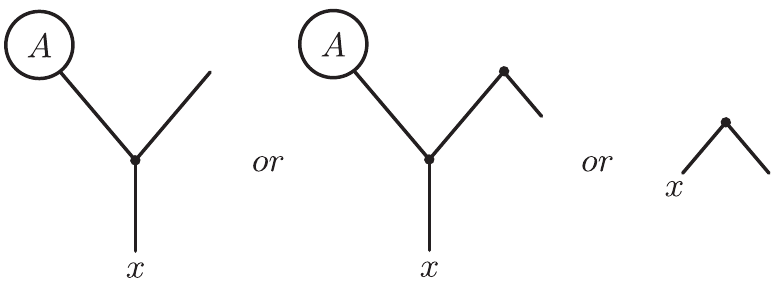} 
\end{equation}
where the $A$ in the circle denotes itself a planar tree which is decorated by $\mu$s and $\nu$s. Item \eqref{ITM:comp-gamma} then follows from this by inductively repeating this procedure for $A$ itself until $\rho$ is of the form $\gamma_\alpha$ together with a labeling of its roots and leaves, which is induced by applying some $\pi \in \Ss_m$ and $\sigma \in \Ss_n$.

To check $\rho$ is of the form \eqref{EQN:tree-iteration}, note that the root $x$ is either the output of a product $\mu$ or a co-inner product $\nu$:
\[
 \includegraphics[scale=.8]{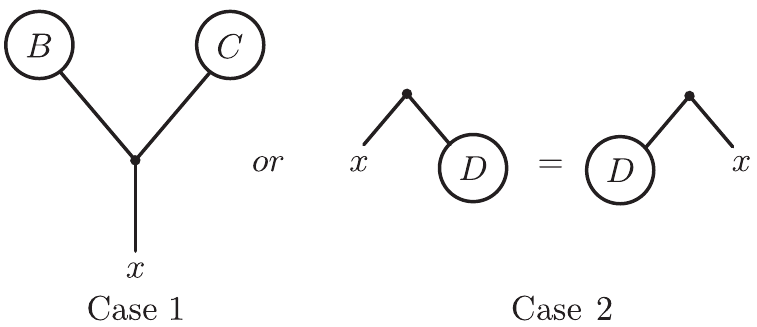} 
\]
where, again $B$, $C$, $D$ denote decorated sub-trees. In the first case, we can successively move more and more from $C$ over to $B$ via the relations
\[
 \includegraphics[scale=.8]{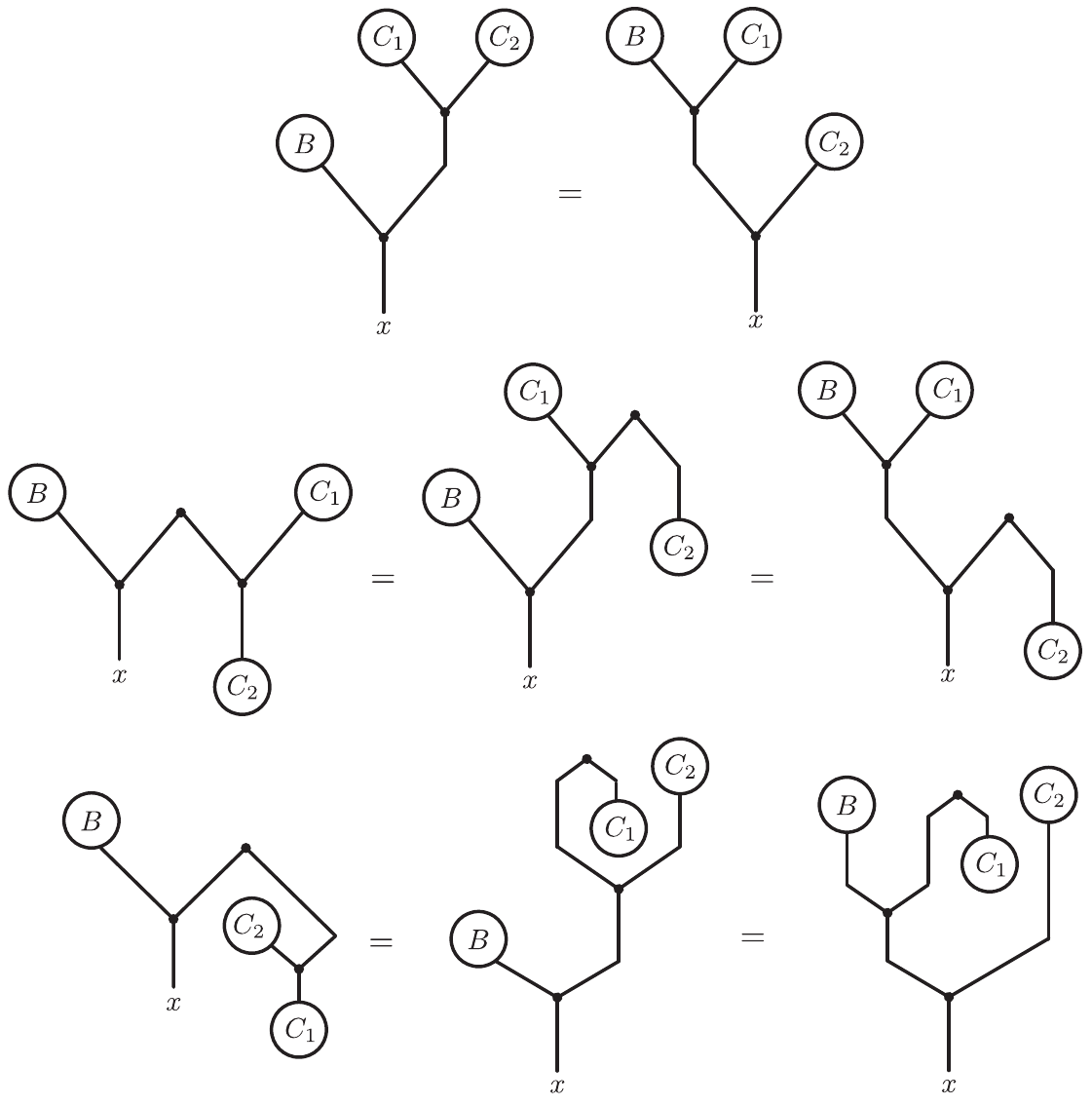} 
\]
The second case, when $D$ is not trivial, may be reduced to the first via the relations
\[
 \includegraphics[scale=.8]{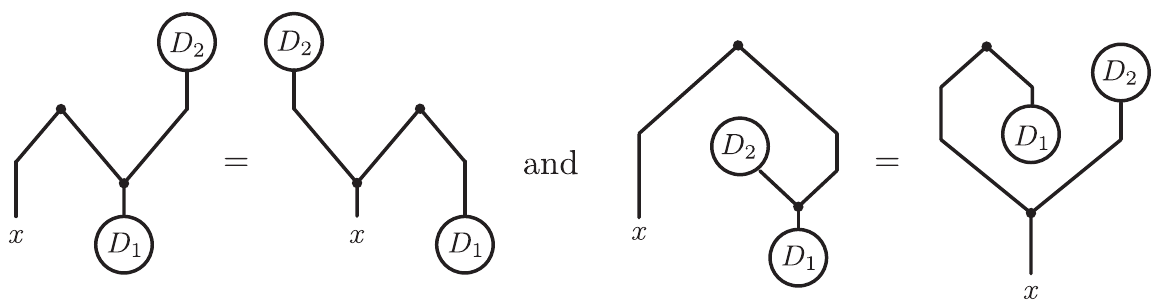} 
\]

For item \eqref{ITM:gamma-cyclic}, we first note that to any $\rho\in \mc V^{(d)}(m,n)$ given by generators $\mu, \bar \mu, \nu$ decorating an $(m,n)$-tree$_0$, there is a well defined equivalence class $\llbracket\alpha,\ooo,\iii\rrbracket$ associated to $\rho$. To see this, write $\rho$ as a planar tree $T$ decorated only with $\mu$s and $\nu$s as in Definition \ref{DEF:V-dioperad0}. Starting from any root or leaf of $T$, proceeding in a clockwise direction along the tree, we can read off outputs $\alpha(j):=\oo$ or inputs $\alpha(j):=\ii$ together with their labels to obtain such data $\alpha$, $\ooo$, and $\iii$. For example, for $\gamma_\alpha$ starting from the root $1$, we obtain exactly $\alpha$, $\ooo_\alpha$, and $\iii_\alpha$ (cf. Figure \ref{FIG:gamma-alpha}), while for $(\pi,\sigma).\gamma_\alpha$ we obtain $\alpha$, $\ooo_\alpha\circ \pi^{-1}$, and $\iii_\alpha\circ \sigma$. To see that this data is well-defined as an equivalence class $\llbracket\alpha,\ooo,\iii\rrbracket$, note that any possible ambiguity in writing $\rho$ does not change the equivalence class $\llbracket\alpha,\ooo,\iii\rrbracket$; these ambiguities are: choosing a starting root or leaf of $T$, presenting the planar tree $T$ by switching the order of $\nu$ via the $\Ss_2$-action, as well as applying the relations defining $\mc V^{(d)}$ in Figure \ref{FIG:V-relations}. 
This shows that if $(\pi,\sigma).\gamma_\alpha=(\pi',\sigma').\gamma_{\alpha'}$, then their associated classes must be equal, $\llbracket\alpha,\ooo_\alpha\circ \pi^{-1}, \iii_\alpha\circ \sigma \rrbracket=\llbracket\alpha',\ooo_{\alpha'}\circ {\pi'}^{-1}, \iii_{\alpha'}\circ \sigma'\rrbracket$ (the ``only if'' part of the statement).

To see the converse, assume $(\alpha,\ooo_\alpha\circ \pi^{-1}, \iii_\alpha\circ \sigma)\sim(\alpha',\ooo_{\alpha'}\circ {\pi'}^{-1}, \iii_{\alpha'}\circ \sigma')$, i.e. $(\alpha,\ooo_\alpha\circ \pi^{-1}, \iii_\alpha\circ \sigma)=(\alpha'\circ \tau^k,\tau^{-k}\circ \ooo_{\alpha'}\circ {\pi'}^{-1}, \tau^{-k}\circ \iii_{\alpha'}\circ \sigma')$ for some integer $k$. Thus, in particular $\alpha=\alpha'\circ \tau^{k}$, so that there is an output label in $\alpha$ at the $(n+m-k)$th spot, since  $\alpha(n+m-k)=\alpha'(\tau^k(n+m-k))=\alpha'(0)=\oo$. To keep track of these outputs, we will label the root of $\gamma_\alpha$ at the $0$th spot with ``$x$'' and the root at the $(n+m-k)$th spot with the letter ``$y$'' in the figures below. In the case where $k=1$, there is an output label at $\alpha_{n+m-1}=\oo$, and we may use the identities
\begin{equation}\label{EQN:gamma-cyclic-last-spot}
 \includegraphics[scale=0.8]{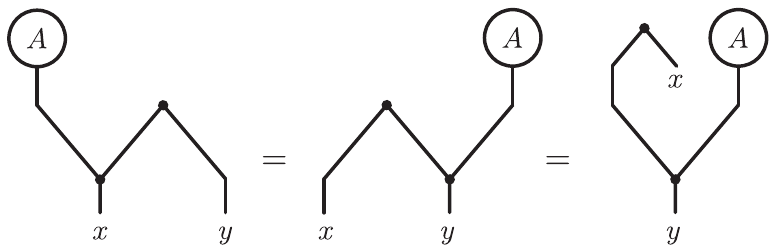} 
\end{equation}
where $A$ itself is an operation as in Figure \ref{FIG:gamma-alpha}. Note, that the right-hand side of \eqref{EQN:gamma-cyclic-last-spot} may again be written in the form of Figure \ref{FIG:gamma-alpha} where the $0$th root is now ``$y$'' instead of ``$x$'' yielding the graph $\gamma_{\alpha'}$ with a cyclic reordering of its labels. In the case $k=n+m-1$, the same equation \eqref{EQN:gamma-cyclic-last-spot} may be used but in a reversed order (switching $x$ and $y$). In the case where $1<k<n+m-1$, we may take the identities
\begin{equation}\label{EQN:gamma-cyclic-mid-spot}
 \includegraphics[width=10cm]{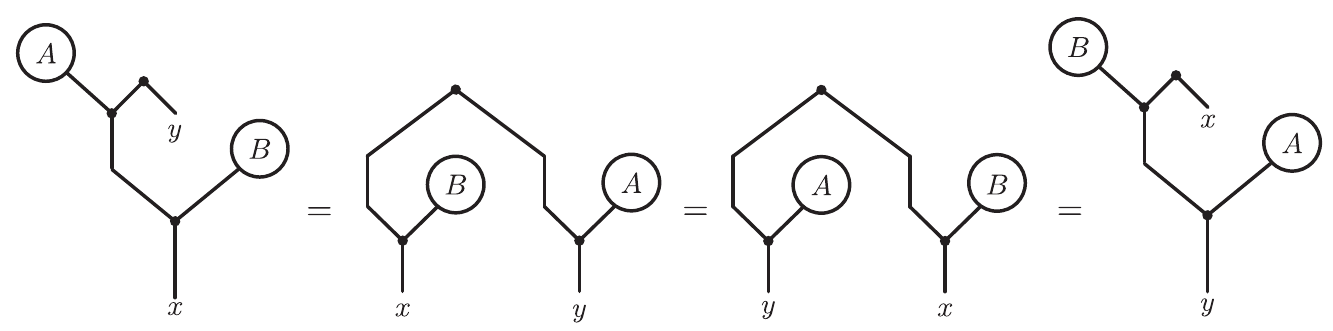} 
\end{equation}
where both $A$ and $B$ are as in Figure \ref{FIG:gamma-alpha}, and the right-hand side can again be rewritten as in Figure \ref{FIG:gamma-alpha} with the $0$th root ``$y$'' instead of ``$x$.'' In the above identities \eqref{EQN:gamma-cyclic-last-spot} and \eqref{EQN:gamma-cyclic-mid-spot}, the associated label cyclically rotate as appropriate, showing that $(\pi,\sigma).\gamma_\alpha=(\pi',\sigma').\gamma_{\alpha'}$.

Item \eqref{ITM:V(m,n)=gammas} now follows from \eqref{ITM:comp-gamma} and \eqref{ITM:gamma-cyclic}.
\end{proof}

\subsection{The $\mc V_\infty^{(d)}$ dioperad}

Using the explicit description of $\mc V^{(d)}$ in Proposition \ref{PROP:V(m,n)-as-gammas} we can calculate the homology $H^\bullet(\bD \mc V^{(d)})$ and see that it has no ``higher'' homology.
\begin{thm}\label{THM:V-is-Koszul}
There is a quasi-isomorphism of dioperads $\bD\mc V^{(d)}\to \mc V^{(-d)}$.
\end{thm}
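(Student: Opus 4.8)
The plan is to prove the statement one arity at a time: fix $(m,n)\in\JJ$ and analyze the complex $\bD\mc V^{(d)}(m,n)$. Since $\mc V^{(d)}$ carries the zero differential, the component $d$ of the cobar differential vanishes, so the only differential on $\bD\mc V^{(d)}(m,n)$ is the vertex-splitting differential $d^\circ$, and the relevant complex is exactly \eqref{EQN:DP-complex},
\[
\bigoplus_{|\Int(T)|=1} \mc V^{(d)*}(T)\otimes \Det(T)\xrightarrow{\ d^\circ\ }\cdots\xrightarrow{\ d^\circ\ }\bigoplus_{|\Int(T)|=n+2m-3}\mc V^{(d)*}(T)\otimes \Det(T),
\]
the sum running over $(m,n)$-tree$_0$s $T$. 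A short count shows that on every $(m,n)$-tree$_0$ one has $\sum_{v\in\Int(T)}(|\Out(v)|-1)=m-1$, so by Proposition~\ref{PROP:V(m,n)-as-gammas}\eqref{ITM:V(m,n)=gammas} the internal degree of each $\mc V^{(d)*}(T)$ equals the constant $-d(m-1)$; hence the whole complex sits in a single internal degree and is graded purely by vertex degree. Because $H^0(\bD\mc V^{(d)})$ has already been identified with $(\mc V^{(d)})^!=\mc V^{(-d)}$ and $\dim\mc V^{(-d)}(m,n)=|\mathfrak A(m,n)|$, it suffices to show that the cohomology of this complex is concentrated in the right-most (vertex degree $0$) term and there has dimension exactly $|\mathfrak A(m,n)|$; this is precisely Koszulity.

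My first step would be to split the complex according to composition classes. A basis element of $\mc V^{(d)*}(T)$ amounts to a decoration of each internal vertex of $T$ by a basis operation of $\mc V^{(d)}$, and grafting these along $T$ produces, by Proposition~\ref{PROP:V(m,n)-as-gammas}\eqref{ITM:comp-gamma}--\eqref{ITM:gamma-cyclic}, a single class $\gamma_{\llbracket\alpha,\ooo,\iii\rrbracket}\in\mc V^{(d)}(m,n)$. Since $d^\circ$ is assembled from cocompositions dual to the $_i\circ_j$, it replaces a decorated vertex by its two-vertex refinements without changing the overall grafted class; hence the assignment of the class $\llbracket\alpha,\ooo,\iii\rrbracket$ is constant along $d^\circ$, and the complex decomposes as a direct sum of subcomplexes $C_{\llbracket\alpha,\ooo,\iii\rrbracket}$ indexed by $\mathfrak A(m,n)$.

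The heart of the argument is to recognize each $C_{\llbracket\alpha,\ooo,\iii\rrbracket}$ as the cellular chain complex of the assocoipahedron $Z_\alpha$ of \cite{PT}. Concretely, the cells of $Z_\alpha$ are the decorated $(m,n)$-tree$_0$s grafting to the fixed class; the dimension of the cell of $T$ is $(n+2m-3)-|\Int(T)|$, so that $\Det(T)$ (in degree $3+|\Int(T)|-n-2m$) supplies exactly the orientation line of that cell, and the vertex-splitting map $d^\circ$, which raises $|\Int(T)|$ by one, is identified with the cellular boundary $\partial$ lowering dimension by one. Under this dictionary the right-most term (maximally expanded tree$_0$s, vertex degree $0$) is the space of $0$-cells of $Z_\alpha$. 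Granting the main result of \cite{PT} that $Z_\alpha$ is contractible, its cellular homology is $\kk$ in degree $0$ and vanishes in higher degrees; translating back, $C_{\llbracket\alpha,\ooo,\iii\rrbracket}$ has one-dimensional cohomology concentrated in vertex degree $0$. Summing over $\mathfrak A(m,n)$ gives cohomology of total dimension $|\mathfrak A(m,n)|$ concentrated in vertex degree $0$, which is the desired quasi-isomorphism $\bD\mc V^{(d)}\to\mc V^{(-d)}$.

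The step I expect to be the main obstacle is the precise identification of $C_{\llbracket\alpha,\ooo,\iii\rrbracket}$ with the cellular chains of $Z_\alpha$: one must match the combinatorics of decorated tree$_0$s (including the $\Ss_2$-ambiguity for $\nu$ resolved in Proposition~\ref{PROP:V(m,n)-as-gammas}\eqref{ITM:gamma-cyclic}) with the face poset of $Z_\alpha$, and, more delicately, check that the Koszul signs packaged in $\Det(T)$ together with the cocomposition signs reproduce the cellular boundary signs on $Z_\alpha$, rather than some global sign twist. Once this bookkeeping is in place, contractibility from \cite{PT} finishes the proof in each connected component.
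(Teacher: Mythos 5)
Your proposal follows essentially the same route as the paper: decompose $\bD\mc V^{(d)}(m,n)$ by the class $\llbracket\alpha,\ooo,\iii\rrbracket\in\mathfrak A(m,n)$ obtained by grafting the vertex decorations, identify each summand with the cellular chains of the assocoipahedron $Z_\alpha$ of \cite{PT} (this is Claim \ref{CLM:linear-iso}), observe that $d^\circ$ respects the decomposition and corresponds to the cellular boundary (the content of \eqref{EQN:dcirc-on-V(d)} and Claim \ref{CLM:chain-map}), and conclude by contractibility of $Z_\alpha$.

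The one place where your plan diverges from what the paper actually does is the step you yourself flag as the main obstacle. You propose to \emph{verify} that the signs coming from $\Det(T)$ and the cocompositions reproduce the cellular boundary signs of $Z_\alpha$ on the nose. The paper never does this: Claim \ref{CLM:chain-map} only asserts that the coefficients of $d^\circ$ agree with those of the cellular differential \emph{up to sign}, i.e. they vanish simultaneously. The exact matching is then circumvented rather than checked, by inductively rescaling the basis elements $\tilde t^{(j)}_{\alpha,i}$ by signs $\varepsilon^{(j)}_{\alpha,i}\in\{\pm 1\}$, starting from the unique top cell, as in \eqref{EQN:epsilon-p-alpha-l}. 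The nontrivial point is that this rescaling is well defined, and the paper's argument for this is worth internalizing: between any $(p+1)$-cell and any $(p-1)$-cell of $Z_\alpha$ incident to it there are exactly two intermediate $p$-cells (two single edge collapses connecting the corresponding trees), so $d^2=0$ and $(d^\circ)^2=0$ force the two candidate definitions of $\varepsilon^{(p-1)}_{\alpha,\ell}$ to agree. This diamond argument replaces the sign bookkeeping entirely, and is likely necessary: with the natural identifications there is no reason the two sign conventions coincide, so the verification you planned could well fail as stated, while the up-to-sign statement plus rescaling always suffices for a complex whose underlying up-to-sign incidence structure is that of a regular CW polytope.
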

The remainder of this subsection is concerned with the proof of Theorem \ref{THM:V-is-Koszul}. We need to show that the homology of $H^\bullet(\bD \mc V^{(d)})$ is concentrated in the top degree, which we will do by isomorphically identifying $\bD \mc V^{(d)}(m,n)$ (up to a shift) with the cellular chains $C_\bullet(Z(m,n))$ on a disjoint union union of contractible spaces, $Z(m,n)=\coprod\limits_{\llbracket\alpha,\ooo,\iii\rrbracket\in \mathfrak A(m,n)} Z_\alpha$. Here, $Z_\alpha$ is the assocoipahedron defined in \cite{PT}, which is a contractible cell complex of dimension $dim(Z_\alpha)=n+2m-4$. More precisely, it is a cellular subdivision of an associahedron times a simplex, which has the property, that its cells are indexed by directed planar trees with vertices whose number of incoming and outgoing edges are as in $\mc V^{(d)}$, i.e. $(|\Out(v)|,|\In(v)|)\in \JJ$. (Note, that the cell complex $Z_\alpha$ associated to a classes $\llbracket\alpha,\ooo,\iii\rrbracket$ depends only on $\alpha$ but not on the labelings $\ooo$, $\iii$.) For a given $\alpha$, we denote by $t^{(j)}_{\alpha,1},\dots,t^{(j)}_{\alpha,r}$ the directed planar trees indexing the cells of $Z_\alpha$ of degree $j\in\{0,\dots, n+2m-4\}$, where $r=r(\alpha,j)$ is the number of $j$-cells in $Z_\alpha$. Thus, the $t^{(j)}_{\alpha,i}$ provide a basis for the cellular chains on $Z_\alpha$, and thus for $Z(m,n)$, as a graded vector space:
\begin{equation}
C_\bullet(Z(m,n))=\bigoplus_{\llbracket\alpha,\ooo,\iii\rrbracket\in \mathfrak A(m,n)} C_\bullet(Z_\alpha),\quad\text{ where }\quad
C_\bullet(Z_\alpha)=\bigoplus_{j=0}^{dim(Z_\alpha)}\bigoplus_{i=1}^{r(\alpha,j)} \kk[j] \cdot t^{(j)}_{\alpha,i}
\end{equation}
The differential $d$ on $C_\bullet(Z_\alpha)$ maps $t^{(j)}_{\alpha,i}$ to a sum of basis elements of one lower degree, $d\big(t^{(j)}_{\alpha,i}\big)=\sum_{\ell=1}^{r(\alpha,j-1)} \delta^{(j\to j-1)}_{\alpha, i\to \ell} \cdot t^{(j-1)}_{\alpha,\ell}$. Here, by construction of $Z_\alpha$ as a contractible polytope, each coefficient $\delta^{(j\to j-1)}_{\alpha, i\to \ell}\in \{-1,0,+1\}$. The following two Claims \ref{CLM:linear-iso} and \ref{CLM:chain-map} state, that, the chain complex $\bD \mc V^{(d)}(m,n)$ is isomorphic to $C_\bullet(Z(m,n))$ up to shifts and signs. These claims, which are proved below, are enough to prove Theorem \ref{THM:V-is-Koszul}.
\begin{claim}\label{CLM:linear-iso}
There is a vector space isomorphism
$$
f:\bD \mc V^{(d)}(m,n)\stackrel\cong\to C_{\bullet}(Z(m,n))[d(m-1)]= \bigoplus_{\llbracket\alpha,\ooo,\iii\rrbracket\in \mathfrak A(m,n)} \bigoplus_{j=0}^{dim(Z_\alpha)}\bigoplus_{i=1}^{r(\alpha,j)} \kk[j+d(m-1)] \cdot t^{(j)}_{\alpha,i}.
$$
\end{claim}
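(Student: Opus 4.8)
The plan is to produce $f$ as an explicit bijection between the natural monomial bases of the two sides and then to verify that this bijection preserves the grading. Since the claim asks only for a vector-space isomorphism, the orientation signs carried by $\Det(T)$ and by the cellular chains are irrelevant here; matching them is what makes $f$ a chain map, which is the content of the separate Claim \ref{CLM:chain-map}. First I would unwind the left-hand side: by the description of $\bD\mc P$ on page \pageref{PAGE:DP(m,n)},
\[
\bD \mc V^{(d)}(m,n) \;=\; \bigoplus_{T} \mc V^{(d)*}(T)\otimes \Det(T),
\]
the sum running over isomorphism classes of $(m,n)$-tree$_0$s $T$, with $\mc V^{(d)*}(T)=\bigotimes_{v\in\Int(T)}\mc V^{(d)}(\Out(v),\In(v))^*$ and $\Det(T)$ one-dimensional.

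The main step is to identify a basis of this space with the cells of $Z(m,n)$. Proposition \ref{PROP:V(m,n)-as-gammas}\eqref{ITM:V(m,n)=gammas} gives a basis of each factor $\mc V^{(d)}(\Out(v),\In(v))$, namely the classes $\gamma_{\llbracket\beta_v,\ooo_v,\iii_v\rrbracket}$ indexed by $\mathfrak A(\Out(v),\In(v))$, so a basis vector of $\bD\mc V^{(d)}(m,n)$ is an $(m,n)$-tree$_0$ $T$ together with a choice, at each internal vertex $v$, of a dual class $\gamma^*_{\llbracket\beta_v\rrbracket}$ (taken modulo the induced-representation identifications and the relabelling of roots and leaves). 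By Definition \ref{DEF:A(m,n)-gamma}, such a class $\beta_v$ is exactly a cyclic word in $\oo$ and $\ii$ recording a planar arrangement of the edges at $v$; assigning one to every vertex promotes the abstract tree$_0$ $T$ to a directed planar tree $t$ with vertex types in $\JJ$. Reading its external edges cyclically yields a word $\alpha$ and the root/leaf labels yield an $(m,n)$-labeling, hence a class $\llbracket\alpha,\ooo,\iii\rrbracket\in\mathfrak A(m,n)$. By the construction of the assocoipahedron in \cite{PT}, these planar trees are precisely the cells of $Z_\alpha$, so collecting over all boundary classes reproduces $C_\bullet(Z(m,n))=\bigoplus_{\llbracket\alpha,\ooo,\iii\rrbracket}C_\bullet(Z_\alpha)$. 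I would then define $f$ to send the basis vector determined by $(T,\{\gamma^*_{\llbracket\beta_v\rrbracket}\})$, with its $\Det(T)$-generator, to the corresponding cell $t^{(j)}_{\alpha,i}$.

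It remains to match degrees. A tree$_0$ $T$ with $|\Int(T)|$ internal vertices corresponds to a cell of dimension $j=n+2m-3-|\Int(T)|$, which ranges over $\{0,\dots,n+2m-4\}=\{0,\dots,\dim Z_\alpha\}$ as $|\Int(T)|$ ranges over $\{1,\dots,n+2m-3\}$. The factor $\Det(T)$ sits in degree $3+|\Int(T)|-n-2m$, while $\mc V^{(d)*}(T)$ contributes $\sum_{v}-d(|\Out(v)|-1)$; using that an $(m,n)$-tree$_0$ has $|\Int(T)|-1$ internal edges, so that $\sum_{v}|\Out(v)|=m+|\Int(T)|-1$, one gets $\sum_{v}(|\Out(v)|-1)=m-1$ and hence a constant contribution $-d(m-1)$. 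Therefore a basis vector lies in degree $-d(m-1)+(3+|\Int(T)|-n-2m)=-j-d(m-1)$, which is exactly the degree of $t^{(j)}_{\alpha,i}$ in $C_\bullet(Z(m,n))[d(m-1)]$. This establishes the desired degree-preserving linear isomorphism.

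The step I expect to be the main obstacle is the middle one: making the dictionary between decorated abstract tree$_0$s and the planar trees indexing the cells of $Z_\alpha$ precise enough to be a genuine bijection and not merely an equality of dimensions. Concretely, one must check that the $\Ss_2$-action and cyclic-equivalence identifications built into the classes $\gamma_{\llbracket\beta_v\rrbracket}$ at each vertex, and into the global class $\llbracket\alpha,\ooo,\iii\rrbracket$, correspond exactly to the identifications among planar trees used to define $Z_\alpha$ in \cite{PT}, so that $f$ is well defined and bijective on the nose.
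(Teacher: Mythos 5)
Your proposal is correct and follows essentially the same route as the paper's own proof: decompose $\bD\mc V^{(d)}(m,n)$ over tree$_0$s, use Proposition \ref{PROP:V(m,n)-as-gammas}\eqref{ITM:V(m,n)=gammas} to replace each vertex factor by classes in $\mathfrak A(\Out(v),\In(v))$, identify a tree$_0$ with cyclic orders at its vertices with a directed planar tree (hence a global class $\llbracket\alpha,\ooo,\iii\rrbracket$ together with a cell of $Z_\alpha$), and match degrees via $\dim(\text{cell})=n+2m-3-|\Int(T)|$ and $\sum_v(m_v-1)=m-1$. The "main obstacle" you flag is resolved in the paper exactly as you anticipate, by exhibiting the explicit inverse map (relabel, forget planarity, record vertex cyclic orders) and invoking $Z_\alpha\cong Z_{\alpha\circ\tau}$ from \cite{PT} to fix a well-defined representative $Z_{\alpha_0}$.
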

Denote the corresponding basis of $\bD \mc V^{(d)}(m,n)$ by $\tilde{t}^{(j)}_{\alpha,i}:=f^{-1}(t^{(j)}_{\alpha,i})$.
\begin{claim}\label{CLM:chain-map}
The differential $d^\circ$ from \eqref{EQN:DP-complex} (see page \pageref{EQN:DP-complex}) on $\bD \mc V^{(d)}(m,n)$ under the above isomorphism has, up to signs, the same coefficients that $d$ has on $C_\bullet(Z(m,n))$, i.e. if we write
$$
d^\circ \big(\tilde{t}^{(j)}_{\alpha,i}\big)=\sum_{\ell=1}^{r(\alpha,j-1)} \tilde\delta^{(j\to j-1)}_{\alpha, i\to \ell} \cdot \tilde{t}^{(j-1)}_{\alpha,\ell}, \text{ then } \tilde\delta^{(j\to j-1)}_{\alpha, i\to \ell}=\pm \delta^{(j\to j-1)}_{\alpha, i\to \ell}.
$$
\end{claim}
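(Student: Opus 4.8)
The plan is to show that, under the identification $f$ of Claim~\ref{CLM:linear-iso}, the cobar differential $d^\circ$ and the cellular boundary $d$ connect the same pairs of basis elements and have matching coefficients up to sign. Since the statement only asserts $\tilde\delta^{(j\to j-1)}_{\alpha,i\to\ell}=\pm\delta^{(j\to j-1)}_{\alpha,i\to\ell}$, and the cellular coefficients already satisfy $\delta^{(j\to j-1)}_{\alpha,i\to\ell}\in\{-1,0,+1\}$, it is enough to prove: (i) the two differentials have the same \emph{support}, i.e.\ $\tilde\delta^{(j\to j-1)}_{\alpha,i\to\ell}=0$ exactly when $\delta^{(j\to j-1)}_{\alpha,i\to\ell}=0$; and (ii) every cobar coefficient lies in $\{-1,0,+1\}$. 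Given (i) and (ii), for each pair $(i,\ell)$ either both coefficients vanish or both lie in $\{-1,+1\}$, and in either case $\tilde\delta=\pm\delta$.

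First I would make $d^\circ$ explicit. Under $f$ a basis element $\tilde t^{(j)}_{\alpha,i}$ is an $(m,n)$-tree$_0$ $T$ whose internal vertices $v$ are decorated by dual generators $\gamma^*_{\beta_v}\in\mc V^{(d)}(\Out(v),\In(v))^*$, together with the orientation in $\Det(T)$, lying in the summand indexed by $\llbracket\alpha,\ooo,\iii\rrbracket$; the degree matching of Claim~\ref{CLM:linear-iso} amounts to the dictionary $|\Int(T)|=n+2m-3-j$, so that both $d^\circ$ and the cellular $d$ raise $|\Int(T)|$ by one (equivalently lower $j$ by one). The differential $d^\circ$ is a sum over internal vertices $v$ and admissible splittings: at $v$ it applies the cocomposition ${}_i\Delta_j$ dual to ${}_i\circ_j$, replacing $v$ by a pair $v'\wedge v''$ joined by a new edge with $v'$ outgoing into $v''$, and records the induced sign from $\Det(T)\otimes\Omega$. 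Because Proposition~\ref{PROP:V(m,n)-as-gammas} identifies every composite of generators with $\pm$ a single basis element $\gamma_{\llbracket\ldots\rrbracket}$, the structure constants of $\mc V^{(d)}$ in the $\gamma$-basis---and hence the cocomposition coefficients in the dual $\gamma^*$-basis---lie in $\{-1,0,+1\}$; the nonzero terms of ${}_i\Delta_j(\gamma^*_{\beta_v})$ are exactly the pairs $(\beta',\beta'')$ with $\gamma_{\beta'}\,{}_i\circ_j\,\gamma_{\beta''}=\pm\gamma_{\beta_v}$. Each admissible triple $(v,i,j)$ produces a distinct target tree $T'$---one recovers $(v,i,j)$ by collapsing the inserted internal edge of $T'$---so the expansion map is injective and no cancellation occurs: each coefficient $\tilde\delta^{(j\to j-1)}_{\alpha,i\to\ell}$ is a single product of a cocomposition sign in $\{-1,+1\}$ with the $\Det(T)\otimes\Omega$ sign, proving (ii).

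It then remains to match supports, which is (i) and the core of the argument. The support of $d^\circ\tilde t^{(j)}_{\alpha,i}$ is precisely the set of trees $T'$ obtained from $T$ by a single admissible expansion $\gamma_{\beta_v}=\gamma_{\beta'}\,{}_i\circ_j\,\gamma_{\beta''}$ of one vertex into two. On the cellular side, the support of $d\,t^{(j)}_{\alpha,i}$ is the set of codimension-one faces of the cell indexed by $T$ in $Z_\alpha$. I would invoke the construction of the assocoipahedron in \cite{PT}: its cells are indexed by directed planar $\mc V^{(d)}$-trees, and the family $\{Z_\alpha\}$ is assembled so that the codimension-one faces of a cell $T$ are exactly the trees obtained by expanding one vertex along a $\mc V^{(d)}$-decomposition $\gamma_{\beta_v}=\gamma_{\beta'}\circ\gamma_{\beta''}$ (the associahedron directions expanding the $\mu$-part and the simplex directions expanding the $\nu$-part). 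Hence the facet-incidence poset of $Z_\alpha$ coincides with the cobar expansion poset, the two supports agree, and (i) holds. Combining (i) and (ii) yields $\tilde\delta^{(j\to j-1)}_{\alpha,i\to\ell}=\pm\delta^{(j\to j-1)}_{\alpha,i\to\ell}$.

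The main obstacle is exactly this support matching: I must be certain that the facet structure of $Z_\alpha$ reproduces all, and only, the dioperadic vertex-expansions, with none missing and none spurious. This is where the detailed polytopal construction of \cite{PT} is indispensable, since a priori a subdivision of an associahedron times a simplex could have facets not visibly corresponding to single $\mc V^{(d)}$-compositions; the content is that \cite{PT} arranges the subdivision so that its face lattice is the poset of $\mc V^{(d)}$-trees under edge-collapse. A secondary point to verify is the grading bookkeeping encoded in $|\Int(T)|=n+2m-3-j$, ensuring that the cohomological direction of $d^\circ$ is correctly identified under $f$ with the homological direction of the cellular $d$; this is routine once the shift $[d(m-1)]$ of Claim~\ref{CLM:linear-iso} is accounted for.
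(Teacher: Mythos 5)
Your reduction of the claim to (i) support matching plus (ii) cobar coefficients lying in $\{-1,0,+1\}$ is exactly how the paper reads its own statement, and your argument for (ii) via Proposition \ref{PROP:V(m,n)-as-gammas} (composites of $\gamma$'s are single basis elements, so cocomposition coefficients are units, and distinct splittings land on distinct decorated trees) is sound. The problem is step (i). You correctly identify the support of $d^\circ$ as the set of trees obtained by splitting one vertex along a decomposition $\gamma_{\beta_v}=\pm\,\gamma_{\beta'}\,{}_i\circ_j\,\gamma_{\beta''}$, and the support of the cellular $d$ as the set of planar $1$-edge expansions of $t_\alpha$ (the latter is what \cite{PT} provides). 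But to compare these two sets you must know \emph{which} pairs $(\beta',\beta'')$ actually compose to $\gamma_{\beta_v}$ --- i.e., you need the composition law of $\mc V^{(d)}$ expressed in the cyclic-order basis. You attribute this matching entirely to \cite{PT} ("the content is that \cite{PT} arranges the subdivision so that its face lattice is the poset of $\mc V^{(d)}$-trees under edge-collapse"), but \cite{PT} is a statement about the cell complex $Z_\alpha$ only: it knows nothing about the dioperad $\mc V^{(d)}$, its $_i\circ_j$-operations, or their behavior on the classes $\llbracket\alpha,\ooo,\iii\rrbracket$. No citation to \cite{PT} can tell you that the dual of ${}_i\circ_j$ splits a cyclic order into two cyclic orders that glue back along the new edge.

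That missing fact is precisely the main technical content of the paper's proof, stated there as \eqref{EQN:dcirc-on-V(d)}: for set-labeled inputs/outputs, the composition $\rho\,_y\circ_{\tilde x}\tilde\rho$ of basis elements of $\mc V^{(d)}(X,Y)$ and $\mc V^{(d)}(\tilde X,\tilde Y)$ is the basis element whose cyclic order is obtained by cutting the two cyclic words at $y$ and $\tilde x$ and concatenating them. Proving this requires (a) extending the $_i\circ_j$-operations to set-labeled spaces $\mc P(X,Y)$ via the relabeling maps $f^\circ,g^\circ$ of \eqref{EQN:y-circ-x-on-P}, checking well-definedness via equivariance, and (b) an explicit computation with the $\gamma_\alpha$'s (equations \eqref{EQN:composing-gamma-alpha}--\eqref{EQN:composed-gamma-alpha}) verifying that the induced labelings really reproduce the glued cyclic order --- this is where one uses the normal form $\gamma_\alpha\,{}_p\circ_1\gamma_{\tilde\alpha}$ and the cyclic-rotation invariance from Proposition \ref{PROP:V(m,n)-as-gammas}\eqref{ITM:gamma-cyclic}. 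Only after \eqref{EQN:dcirc-on-V(d)} is established does the dioperadic expansion poset become identified with the planar edge-expansion poset, at which point your appeal to \cite{PT} for the facet structure of $Z_\alpha$ closes the argument. So your outline is the right shape, but the step you dismissed as a citation is the actual proof.
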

In other words, the last Claim \ref{CLM:chain-map} says, that the coefficients of $d^\circ$ vanish precisely when the coefficients of $d$ vanish, but may differ from the coefficients of $d$ by a sign when they don't vanish. We first show how the above two claims imply Theorem \ref{THM:V-is-Koszul}.
\begin{proof}[Proof of Theorem \ref{THM:V-is-Koszul} using Claims \ref{CLM:linear-iso} and \ref{CLM:chain-map}]\label{PROOF:theorem35}
The linear isomorphism $f$ from Claim \ref{CLM:linear-iso} is by Claim \ref{CLM:chain-map}, in general, not necessarily a chain map, since the signs of $d$ and $d^\circ$ do not match. We can however define a new map $g:\bD \mc V^{(d)}(m,n)\stackrel\cong\to C_{\bullet}(Z(m,n))[d(m-1)]$ by fixing the signs of $f$, so that $g$ is an isomorphism of chain complex. This then in turn implies isomorphisms on homology, which implies the claim of the theorem, i.e.
\[
H^\bullet(\bD \mc V^{(d)}(m,n))\cong H_{\bullet}(Z(m,n))[d(m-1)]\cong \bigoplus_{\llbracket\alpha,\ooo,\iii\rrbracket\in \mathfrak A(m,n)} \kk[d(m-1)] \cong \mc V^{(-d)}(m,n).
\]

To define $g$, fix a component $Z_\alpha$, and recall that $Z_\alpha$ has only one top dimensional cell, indexed by $t^{(n+2m-4)}_{\alpha,1}$. For this cell we define $g(\tilde t^{(n+2m-4)}_{\alpha,1}):=f(\tilde t^{(n+2m-4)}_{\alpha,1})=t^{(n+2m-4)}_{\alpha,1}$. Now suppose $g(\tilde t^{(j)}_{\alpha,i})=\varepsilon^{(j)}_{\alpha,i}\cdot t^{(j)}_{\alpha,i}$, where $\varepsilon^{(j)}_{\alpha,i}\in \{+1,-1\}$, have been defined for all $j=n+2m-4,\dots, p$, in a way so that $g\big( d^\circ(\tilde t^{(j)}_{\alpha,i})\big)=d\big( g(\tilde t^{(j)}_{\alpha,i})\big)$ for any $j=n+2m-4,\dots, p+1$. We then define $g(\tilde t^{(p-1)}_{\alpha,\ell})$ as follows. Let $\tilde t^{(p)}_{\alpha,i}$ be the tree for any $p$ cell for which $\delta^{(p\to p-1)}_{\alpha, i\to \ell}\neq 0$. Then, define $g(\tilde t^{(p-1)}_{\alpha,\ell}):=\varepsilon^{(p-1)}_{\alpha,\ell}\cdot t^{(p-1)}_{\alpha,\ell}$, where
\begin{equation}\label{EQN:epsilon-p-alpha-l}
\varepsilon^{(p-1)}_{\alpha,\ell}:=\varepsilon^{(p)}_{\alpha,i}\cdot \delta^{(p\to p-1)}_{\alpha, i\to \ell} \cdot \tilde\delta^{(p\to p-1)}_{\alpha, i\to \ell}.
\end{equation}
We claim that this is well-defined (independent of the chosen $\tilde t^{(p)}_{\alpha,i}$), and that $g\big( d^\circ(\tilde t^{(p)}_{\alpha,i})\big)=d\big( g(\tilde t^{(p)}_{\alpha,i})\big)$.

To check that $g$ is well-defined, assume that there is another $\tilde t^{(p)}_{\alpha,i'}$ with 
$\delta^{(p\to p-1)}_{\alpha, i'\to \ell}\neq 0$. Since the non-zero faces in $Z_\alpha$ are precisely those obtained by edge expansion of a planar directed tree, we see that $t^{(p)}_{\alpha,i}$ and $t^{(p)}_{\alpha,i'}$ are obtained from $t^{(p-1)}_{\alpha,\ell}$ by collapsing an edge. Collapsing both these edges in $t^{(p-1)}_{\alpha,\ell}$ gives a new tree $t^{(p+1)}_{\alpha,k}$ which has $t^{(p)}_{\alpha,i}$ and $t^{(p)}_{\alpha,i'}$ as edge expansions, i.e. for which $\delta^{(p+1\to p)}_{\alpha, k\to i}\neq 0$ and $\delta^{(p+1\to p)}_{\alpha, k\to i'}\neq 0$, and thus also $\tilde \delta^{(p+1\to p)}_{\alpha, k\to i}\neq 0$ and $\tilde \delta^{(p+1\to p)}_{\alpha, k\to i'}\neq 0$. Since the only possibilities to obtain $t^{(p+1)}_{\alpha,k}$ from $t^{(p-1)}_{\alpha,\ell}$ via two edge collapses is by going through either $t^{(p)}_{\alpha,i}$ or $t^{(p)}_{\alpha,i'}$, we see from $d^2(t^{(p+1)}_{\alpha,k})=0$, that $\delta^{(p+1\to p)}_{\alpha, k\to i}\delta^{(p\to p-1)}_{\alpha, i\to \ell}=-\delta^{(p+1\to p)}_{\alpha, k\to i'}\delta^{(p\to p-1)}_{\alpha, i'\to \ell}$. Similarly, we obtain from $(d^\circ)^2(\tilde t^{(p+1)}_{\alpha,k})=0$, that $\tilde\delta^{(p+1\to p)}_{\alpha, k\to i}\tilde\delta^{(p\to p-1)}_{\alpha, i\to \ell}=-\tilde\delta^{(p+1\to p)}_{\alpha, k\to i'}\tilde\delta^{(p\to p-1)}_{\alpha, i'\to \ell}$. Thus, we obtain:
\begin{multline*}
\varepsilon^{(p)}_{\alpha,i}\cdot \delta^{(p\to p-1)}_{\alpha, i\to \ell} \cdot \tilde\delta^{(p\to p-1)}_{\alpha, i\to \ell}
=
\varepsilon^{(p+1)}_{\alpha,k}\cdot \delta^{(p+1\to p)}_{\alpha, k\to i} \cdot \tilde\delta^{(p+1\to p)}_{\alpha, k\to i}\cdot \delta^{(p\to p-1)}_{\alpha, i\to \ell} \cdot \tilde\delta^{(p\to p-1)}_{\alpha, i\to \ell}
\\
=
\varepsilon^{(p+1)}_{\alpha,k}\cdot \delta^{(p+1\to p)}_{\alpha, k\to i'} \cdot \tilde\delta^{(p+1\to p)}_{\alpha, k\to i'}\cdot \delta^{(p\to p-1)}_{\alpha, i'\to \ell} \cdot \tilde\delta^{(p\to p-1)}_{\alpha, i'\to \ell}
=
\varepsilon^{(p)}_{\alpha,i'}\cdot \delta^{(p\to p-1)}_{\alpha, i'\to \ell} \cdot \tilde\delta^{(p\to p-1)}_{\alpha, i'\to \ell}
\end{multline*}
This shows that $\varepsilon^{(p-1)}_{\alpha,\ell}$ in \eqref{EQN:epsilon-p-alpha-l} is well-defined.

It is now immediate to check that $g$ is a chain map:
\[
g\big( d^\circ(\tilde t^{(p)}_{\alpha,i})\big)
=\sum_{\ell=1}^{r} \tilde\delta^{(p\to p-1)}_{\alpha, i\to \ell} \cdot \varepsilon^{(p-1)}_{\alpha,\ell}\cdot t^{(p-1)}_{\alpha,\ell}
=d\big( g(\tilde t^{(p)}_{\alpha,i})\big)
\]
This completes the proof of Theorem \ref{THM:V-is-Koszul}.
\end{proof}
It remains to prove Claims \ref{CLM:linear-iso} and \ref{CLM:chain-map}.
\begin{proof}[Proof of Claim \ref{CLM:linear-iso}]
Recall from page \pageref{PAGE:DP(m,n)} that $\bD \mc V^{(d)}(m,n)\cong \bigoplus_{(m,n)\text{-tree$_0$ }T} (\mc V^{(d)})^*(T)\otimes \Det(T)$. Since we do not care about signs for either Claims \ref{CLM:linear-iso} and \ref{CLM:chain-map}, we will absorb those in vector space isomorphisms without further mention; in particular $\Det(T)\cong \kk[n+2m-3-|\Int(T)|]$. Now, denoting by $m_v=|\Out(v)|$ and $n_v=|\In(v)|$ the number of outgoing and incoming edges at a vertex $v$, respectively, we can write
\begin{align}\label{EQN:V(d)*(T)}
(\mc V^{(d)})^*(T)=&\bigotimes_{v\in \Int(T)} ( \mc V^{(d)})^*(\Out(v),\In(v))
\\
\cong & \bigotimes_{v\in \Int(T)} \bigg( \bigoplus\limits_{ \tiny\begin{array}{c}  f:\Out(v)\stackrel \sim\to \{1,\dots, m_v\}\\  g:\In(v)\stackrel \sim\to \{1,\dots, n_v\}\end{array}} \mc V^{(d)}(m_v,n_v)^*\bigg)_{(\Ss_{m_v},\Ss_{n_v})}\nonumber
\\
\cong & \bigotimes_{v\in \Int(T)} \bigg( \bigoplus\limits_{ \tiny\begin{array}{c}  f:\Out(v)\stackrel \sim\to \{1,\dots, m_v\}\\  g:\In(v)\stackrel \sim\to \{1,\dots, n_v\}\end{array}} \bigoplus_{\llbracket\alpha,\ooo,\iii\rrbracket\in \mathfrak A(m_v,n_v)} \kk[d(1-m_v)]^*\bigg)_{(\Ss_{m_v},\Ss_{n_v})}\nonumber
\\
\cong & \bigotimes_{v\in \Int(T)} \bigoplus_{\llbracket\alpha,\ooo,\iii\rrbracket\in \mathfrak A(\Out(v),\In(v))} \kk[d(m_v-1)],\nonumber
\end{align}
where $\mathfrak A(X,Y)$\label{PAGE:A(X,Y)} consists of equivalence classes of triples $(\alpha,\ooo,\iii)$, $\alpha:\Z_{n+m}\to \{\oo,\ii\}$, $\ooo:X\to \Z_{n+m}$, $\iii:Y\to \Z_{n+m}$ with $Image(\ooo)=\alpha^{-1}(\oo)$ and $Image(\iii)=\alpha^{-1}(\ii)$, modulo the relation generated by $(\alpha,\ooo,\iii)\sim (\alpha\circ \tau,\tau^{-1}\circ\ooo,\tau^{-1}\circ\iii)$; cf. Definition \ref{DEF:A(m,n)-gamma}. Since the sum $\sum_{v\in \Int(T)} (m_v-1)=m-1$ is the total number $m$ of roots of $T$ minus $1$, we see that 
\[
(\mc V^{(d)})^*(T)\cong \bigoplus_{\tiny\begin{array}{c}\forall v\in\Int(T)\text{ choose}\\\llbracket\alpha,\ooo,\iii\rrbracket\in \mathfrak A(\Out(v),\In(v))\end{array}} \kk[d(m-1)].
\]
Thus, we obtain that
\begin{equation*}\label{EQN:DV=T-cyclic-v}
\bD \mc V^{(d)}(m,n) \cong 
 \bigoplus_{(m,n)\text{-tree$_0$ }T} \bigoplus_{\tiny\begin{array}{c}\forall v\in\Int(T)\text{ choose}\\\llbracket\alpha,\ooo,\iii\rrbracket\in \mathfrak A(\Out(v),\In(v))\end{array}} \kk[d(m-1)+(n+2m-3-\Int(T))].
\end{equation*}
Now, note that the data of an $(m,n)$-tree$_0$ $T$ with a choice of cyclic order $\llbracket\alpha,\ooo,\iii\rrbracket\in \mathfrak A(\Out(v),\In(v))$ at each of its internal vertices $v$ is equivalent to writing $T$ as a directed planar tree by assembling the edges at each vertex according to its cyclic structure; see the planar tree in the center of Figure \ref{FIG:(m,n)-tree0-vs-cells-of-Z}. Here, each leaf has a unique label from $\{1,\dots, n\}$, while each root has a unique label from $\{1,\dots, m\}$.
\begin{figure}[h]
\[
\hspace{-.2cm} \includegraphics[scale=.95]{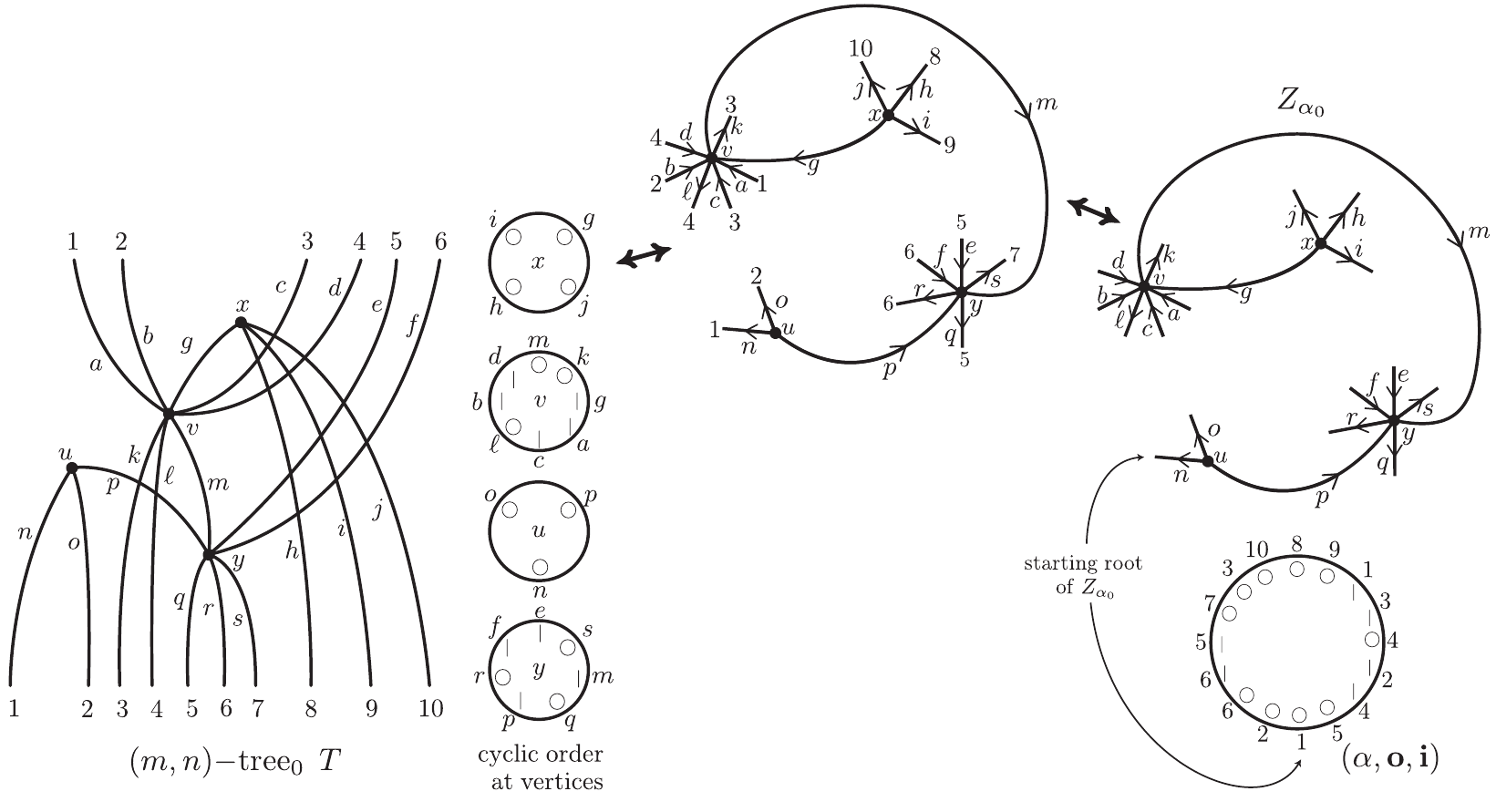}
\]
\caption{Correspondence between an $(m,n)$-tree$_0$ with cyclic order at their internal vertices, and the class $\llbracket\alpha,\ooo,\iii\rrbracket$ together with a cell of $Z_{\alpha_0}$. The edges are denoted by $a$, $b$, $c$, $\dots$, $s$, the internal vertices are denoted $u$, $v$, $x$, $y$.}\label{FIG:(m,n)-tree0-vs-cells-of-Z}
\end{figure}

Moreover, this planar structure induces a cyclic order on the set of all roots and leaves of the $(m,n)$-tree$_0$ $T$ (which is determined only up to cyclic order as it depends on the starting root or leaf), which is precisely the data of a class $\llbracket\alpha,\ooo,\iii\rrbracket$. With this, the directed planar tree obtained by forgetting these labels at the roots and leaves, is precisely a tree $t_\alpha$ indexing a cell of $Z_\alpha$; see the right side of Figure \ref{FIG:(m,n)-tree0-vs-cells-of-Z}. Note that although $\alpha$ is defined only up to cyclic order, it does not matter which $Z_\alpha$ we choose, since $Z_{\alpha}\simeq Z_{\alpha\circ \tau}$ are isomorphic cell complexes (cf. \cite[Corollary 3.11(2)]{PT}). However, in order to pick a well-defined $Z_{\alpha_0}$, we will choose the unique $\alpha_0=\alpha\circ \tau^k$, which has the ``starting root'' at $0\in \Z_{n+m}$, i.e. at $0$ it is outgoing, $\alpha_0(0)=\oo$, with outgoing label ``$1$'', $\ooo(1)=0$; see the right side of Figure \ref{FIG:(m,n)-tree0-vs-cells-of-Z}.

Thus, we claim that there is an isomorphism of vector spaces:
\begin{equation}\label{EQN:DV=[aoi]-Za}
\bD \mc V^{(d)}(m,n) \cong \bigoplus_{\llbracket\alpha,\ooo,\iii\rrbracket\in \mathfrak A(m,n)} \,\,\,\bigoplus_{\text{cells $c$ of }Z_{\alpha_0}} \kk[d(m-1)+dim(\text{cell }c)].
\end{equation}
The stated data of a class $\llbracket\alpha,\ooo,\iii\rrbracket\in \mathfrak A(m,n)$ with a cell of $Z_{\alpha_0}$ indexed by $t_\alpha$ is equivalent to the staring data of an $(m,n)$-tree$_0$ $T$ with a cyclic order at its vertices $\llbracket\alpha,\ooo,\iii\rrbracket\in \mathfrak A(\Out(v),\In(v))$, since we can give an inverse map from the right side of Figure \ref{FIG:(m,n)-tree0-vs-cells-of-Z} to the left side: place labels of $\ooo$ and $\iii$ around $t_\alpha$, forget the planar structure and call the resulting $(m,n)$-tree$_0$ $T$, and, in addition, record the individual cyclic orders at each interior vertex $v$ of $T$. The maps from right to left, and from left to right of Figure \ref{FIG:(m,n)-tree0-vs-cells-of-Z}, as described above, are inverses of each other.

Note furthermore, that the dimension of a cell indexed by a tree $t_\alpha$ of $Z_\alpha$ is precisely $n+2m-3-\#($internal vertices of $t_\alpha)$. Thus, \eqref{EQN:DV=[aoi]-Za} establishes an isomorphism as stated in Claim \ref{CLM:linear-iso}.
\end{proof}
\begin{proof}[Proof of Claim \ref{CLM:chain-map}]
We need to compare the differentials $d^\circ$ on $\bD \mc V^{(d)}(m,n)$ with $d$ on $C_\bullet(Z(m,n))$ under the isomorphism from Claim \ref{CLM:linear-iso}. The main (and technical) part of this proof is \eqref{EQN:dcirc-on-V(d)}, which states that the composition in $\mc V^{(d)}$ is given by combining cyclic orders at a vertex along their common edge to give the resulting cyclic order of the composition.

To state \eqref{EQN:dcirc-on-V(d)} precisely, we recall the already used identification of a basis of $\mc V^{(d)}(X,Y)$ for \emph{sets} $X$ and $Y$ with cyclic orders on $X\sqcup Y$:
\begin{multline}\label{EQN:V(d)(XY)-iso}
 \mc V^{(d)}(X,Y)
\cong
\bigg( \bigoplus\limits_{ \tiny\begin{array}{c}  f:X\stackrel \sim\to \{1,\dots, m\}\\  g:Y\stackrel \sim\to \{1,\dots, n\}\end{array}} \mc V^{(d)}(m,n)\bigg)_{(\Ss_{m},\Ss_{n})}\\
\cong
 \bigg( \bigoplus\limits_{ \tiny\begin{array}{c}  f:X\stackrel \sim\to \{1,\dots, m\}\\  g:Y\stackrel \sim\to \{1,\dots, n\}\end{array}} \bigoplus_{\llbracket\alpha,\ooo,\iii\rrbracket\in \mathfrak A(m,n)} \kk[d(1-m)]\bigg)_{(\Ss_{m},\Ss_{n})}
\cong
\bigoplus_{\llbracket\alpha,\ooo,\iii\rrbracket\in \mathfrak A(X,Y)} \kk[d(1-m)].
\end{multline}
Thus, a basis element $\rho\in \mc V^{(d)}(X,Y)$ is uniquely given by a class $\llbracket\alpha,\ooo,\iii\rrbracket\in \mathfrak A(X,Y)$, which can be represented by a bijection $\rho_{\ooo\iii}:=\ooo\sqcup \iii:X\sqcup Y\stackrel \sim\to \Z_{n+m}$ which itself is determined only up to cyclic rotation on $\Z_{n+m}$; cf. page \pageref{PAGE:A(X,Y)}. Now, the composition in $\mc V^{(d)}$ applied to sets $X$, $Y$, $\tilde X$, $\tilde Y$ along an edge connecting $\tilde x\in \tilde X$ with $y \in Y$ is a map $_y\circ _{\tilde x}:\mc V^{(d)}(X,Y)\otimes \mc V^{(d)}(\tilde X,\tilde Y)\to \mc V^{(d)}(X\sqcup (\tilde X-\{\tilde x\}),(Y-\{y\})\sqcup \tilde Y)$. For $\rho \in \mc V^{(d)}(X,Y)$ given by $\rho_{\ooo\iii}$ with $\rho_{\ooo\iii}(y)=n+m-1$ and $\tilde \rho \in \mc V^{(d)}(\tilde X,\tilde Y)$ given by $\tilde \rho_{\ooo\iii}$ with $\tilde \rho_{\ooo\iii}(\tilde x)=0$, we claim that $\rho_y\circ _{\tilde x}\tilde \rho$ is the basis element with cyclic order given by ``cutting'' the cyclic orders $\rho_{\ooo\iii}$ and $\tilde \rho_{\ooo\iii}$ at $y$ and $\tilde x$, respectively, and combining them along their boundaries:
\begin{align}\label{EQN:dcirc-on-V(d)}
& (\rho_y\circ _{\tilde x}\tilde \rho)_{\ooo\iii}:X\sqcup (\tilde X-\{\tilde x\})\sqcup (Y-\{y\})\sqcup \tilde Y\to \Z_{m+n+\tilde m+\tilde n-2},\\
& (\rho_y\circ _{\tilde x}\tilde \rho)^{-1}_{\ooo\iii}(j)=\left\{\begin{array}{ll}\rho^{-1}_{\ooo\iii}(j), &\text{for }0\leq j \leq m+n-2 \\ \tilde \rho_{\ooo\iii}^{-1}(j-m-n+2), &\text{for }m+n-1\leq j \leq m+n+\tilde m+\tilde n -3 \end{array}\right. \nonumber\\
& \includegraphics[scale=.8]{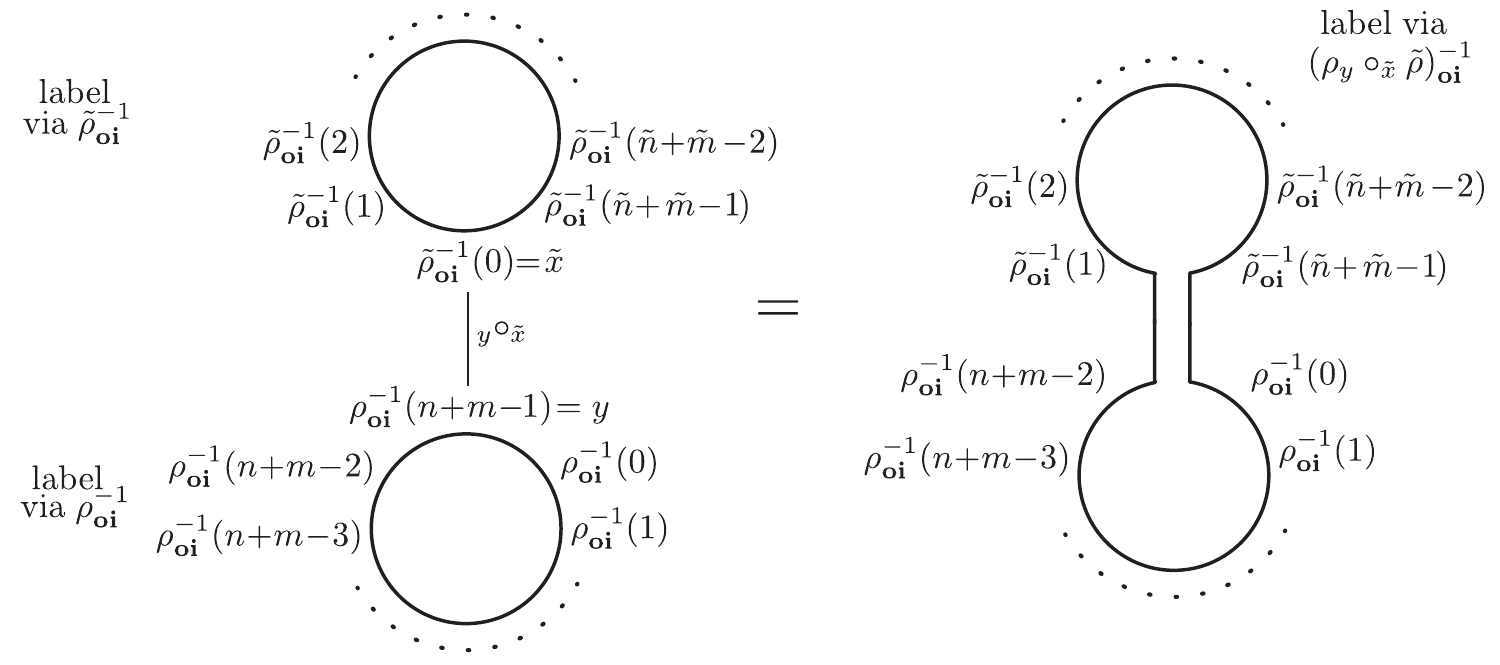}
\nonumber
\end{align}
Once we have established \eqref{EQN:dcirc-on-V(d)}, the result of Claim \ref{CLM:chain-map} can be seen as follows. Dualizing the linear map $_y\circ _{\tilde x}$ shows that $(_y\circ _{\tilde x})^*(\rho^*)$ for some $\rho^*\in (\mc V^{(d)})^*(X,Y)$ for some cyclic order $\rho_{\ooo\iii}$ is given by a sum $(_y\circ _{\tilde x})^*(\rho^*)=\sum (\rho')^*\otimes (\rho'')^*$ of all possible ways of breaking $\rho_{\ooo\iii}$ into two cyclic orders $\rho'_{\ooo\iii}$ and $\rho''_{\ooo\iii}$ so that their composition yields the given order $\rho_{\ooo\iii}$ as in \eqref{EQN:dcirc-on-V(d)}. Using this dual of the composition under the isomorphism \eqref{EQN:V(d)*(T)} shows, up to sign, that $d^\circ$ (which is essentially the dual of the compositions $_y\circ _{\tilde x}$, cf. \eqref{EQN:DP-complex} on page \pageref{EQN:DP-complex}) splits a vertex $v$ of an $(m,n)$-tree$_0$ $T$ into two ``allowable'' vertices connected via one directed edge. (Here, ``allowable'' means that the new vertices have the right number of outputs and inputs, as specified by $\JJ$.) The total cyclic order of the combined vertices after connecting them via their common edge coincides with the original one. Thus, this shows that, again up to sign, going from the left side of Figure \ref{FIG:(m,n)-tree0-vs-cells-of-Z} to the right side of Figure \ref{FIG:(m,n)-tree0-vs-cells-of-Z} keeps the global class $\llbracket\alpha,\ooo,\iii\rrbracket\in \mathfrak A(m,n)$ intact, while a tree $t_\alpha$ indexing a cell of $Z_{\alpha_0}$ gets mapped to an ``allowable'' $1$-edge expansion of $t_\alpha$. These are precisely the trees indexing the boundary components of the cell in $Z_{\alpha_0}$ indexed by $t_\alpha$. Thus, this proves Claim \ref{CLM:chain-map}.

It remains to prove \eqref{EQN:dcirc-on-V(d)}. We first describe how the $_i\circ _j$-operations of a dioperad $\mc P$ are extended to an operation on $\mc P(X,Y)= \Bigg( \bigoplus\limits_{ \tiny\begin{array}{c} f:X\stackrel \sim\to \{1,\dots, m\}\\  g:Y\stackrel \sim\to \{1,\dots, n\}\end{array}} \mc P(m,n)\Bigg)_{(\Ss_m,\Ss_n)}$ for \emph{sets} $X$ and $Y$; cf. \cite[(1.32)]{MSS}. More precisely, consider sets $X$, $Y$, $\tilde X$, $\tilde Y$ with $X, Y, \tilde X \neq \varnothing$, and a choice of $\tilde x\in \tilde X$ and $y\in Y$, and denote by $m=|X|$, $n=|Y|$, $\tilde m=|\tilde X|$, and $\tilde n=|\tilde Y|$. Then, there are induced maps
\begin{align}\label{EQN:y-circ-x-on-P}
_y\circ_{\tilde x}:& \mc P(X,Y)&\otimes & \mc P(\tilde X,\tilde Y)&\to & \mc P(X\sqcup (\tilde X-\{\tilde x\}), (Y-\{y\})\sqcup \tilde Y)\\
& \left[(\rho)_{\tiny\begin{array}{c} f:X\stackrel \sim\to \{1,\dots, m\}\\  g:Y\stackrel \sim\to \{1,\dots, n\}\end{array}}\right]&\otimes & \left[(\tilde \rho)_{\tiny\begin{array}{c} \tilde f:\tilde X\stackrel \sim\to \{1,\dots, \tilde m\}\\  \tilde g:\tilde Y\stackrel \sim\to \{1,\dots, \tilde n\}\end{array}}\right]&\mapsto & \left[(\rho \,_{g(y)}\circ_{\tilde f(\tilde x)} \tilde \rho)_{\tiny\begin{array}{c} f^\circ\\  g^\circ\end{array}}\right],\nonumber
\end{align}
where $f^\circ:X\sqcup (\tilde X-\{\tilde x\})\to \{1,\dots, m+\tilde m-1\}$ is $f^\circ(w):=f(w)+\tilde f(\tilde x)-1$ for $w\in X$, and $f^\circ(w)=\left\{\begin{array}{ll}\tilde f(w), &\text{for }\tilde f(w)<\tilde f(\tilde x)\\ \tilde f(w)+m-1, &\text{for }\tilde f(w)>\tilde f(\tilde x)\end{array}\right.$ for $w\in \tilde X-\{\tilde x\}$, and $g^\circ:(Y-\{y\})\sqcup \tilde Y\to \{1,\dots, n+\tilde n-1\}$ is given by $g^\circ (w)=\left\{\begin{array}{ll}g(w), &\text{for }g(w)<g(y)\\ g(w)+\tilde n-1, &\text{for }g(w)>g(y)\end{array}\right.$ for $w\in Y-\{y\}$, and $g^\circ(w)=\tilde g(w)+g(y)-1$ for $w\in \tilde Y$; see Figure \ref{FIG:composition-P(XY)}.
\begin{figure}[h]
\[
\includegraphics[scale=.9]{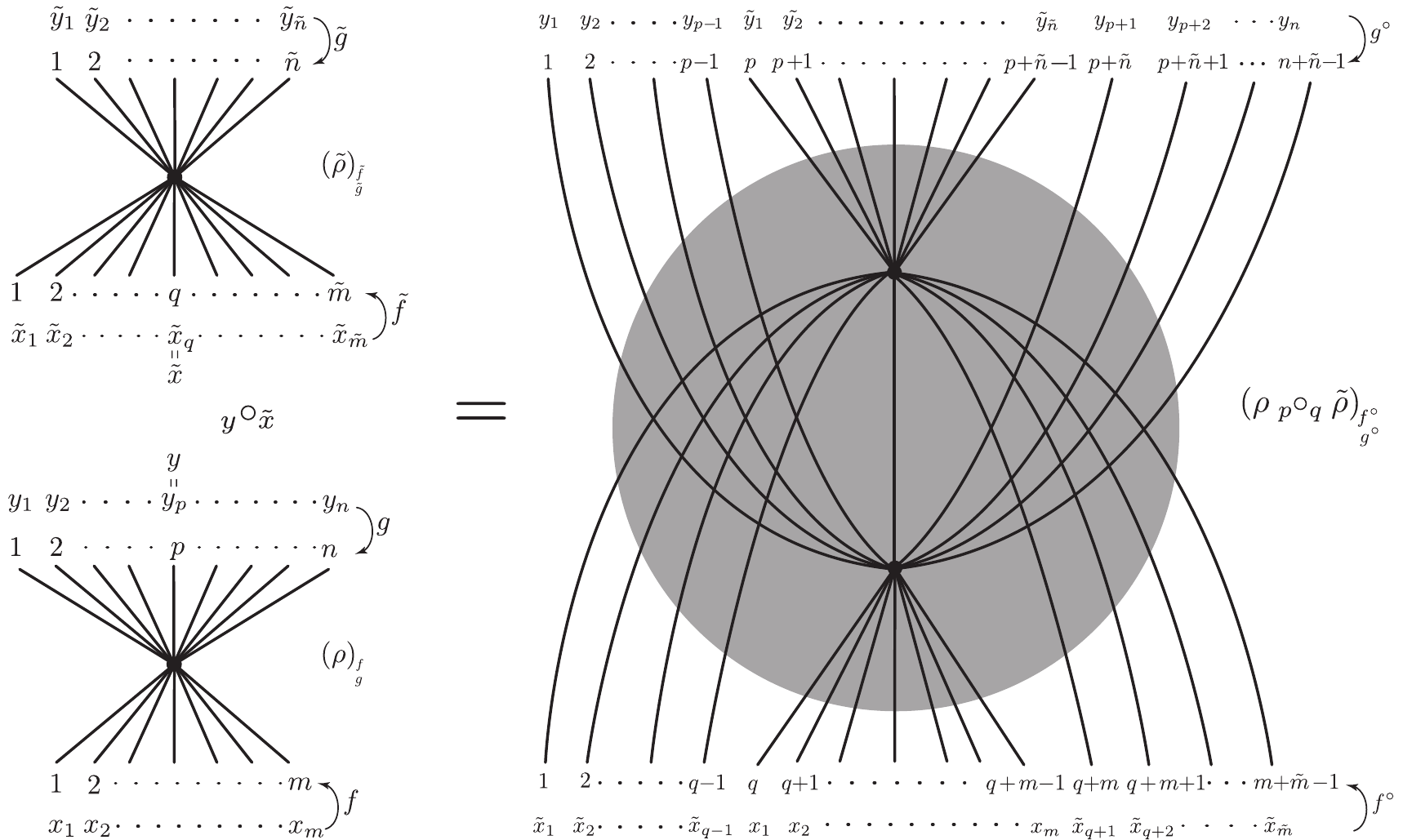}
\]
\caption{Composition for a dioperad $\mc P$ with inputs and outputs labeled by sets}\label{FIG:composition-P(XY)}
\end{figure}
Using the equivariance property (Definition \ref{DEF:dioperad0}(c)), one checks that this map is well-defined, i.e. independent of the chosen representatives $\rho$, $f$, $g$, $\tilde \rho$, $\tilde f$, $\tilde g$, just as in \cite[page 66, (1.32)]{MSS}.

Next we identify the above composition operations $_y\circ_{\tilde x}$ for the dioperad $\mc V^{(d)}$ under the isomorphism $\mc V^{(d)}(X,Y)\cong \bigoplus_{\llbracket\alpha,\ooo,\iii\rrbracket\in \mathfrak A(X,Y)} \kk[d(1-m)]$ from \eqref{EQN:V(d)(XY)-iso}. Starting from basis elements $\rho\in \mc V^{(d)}(X,Y) $ and $\tilde \rho \in \mc V^{(d)}(\tilde X,\tilde Y)$, we want to compute $\rho_y\circ_{\tilde x} \tilde \rho$ for some $y\in Y$ and $\tilde x \in \tilde X$. As described above, $\rho$ and $\tilde \rho$ have associated cyclic orders $\rho_{\ooo\iii}:X\sqcup Y\to \Z_{m+n}$ and $\tilde \rho_{\ooo\iii}:\tilde X\sqcup \tilde Y\to \Z_{\tilde m+\tilde n}$ under \eqref{EQN:V(d)(XY)-iso}. Since $\rho_{\ooo\iii}$ and $\tilde \rho_{\ooo\iii}$ are only determined up to cyclic rotation, we may assume, without loss of generality, that $\tilde\rho^{-1}_{\ooo\iii}(0)=\tilde x$, while we assume that $\rho^{-1}_{\ooo\iii}(0)=x$ for some $x\in X$. Using these choices for $\rho_{\ooo\iii}$ and $\tilde \rho_{\ooo\iii}$, there are $f:X\to \{1,\dots, m\}$, $g:Y\to \{1,\dots, n\}$, $\alpha\in \mathfrak A(m,n)$ and similarly $\tilde f:\tilde X\to \{1,\dots, \tilde m\}$, $\tilde g:\tilde Y\to \{1,\dots, \tilde n\}$, $\tilde \alpha\in \mathfrak A(\tilde m,\tilde n)$ so that $\rho_{\ooo\iii}=(\ooo_\alpha\circ f)\sqcup (\iii_\alpha\circ g)$ and $\tilde \rho_{\ooo\iii}=(\ooo_{\tilde \alpha}\circ \tilde f)\sqcup (\iii_{\tilde \alpha}\circ \tilde g)$.
\[
\includegraphics[scale=.75]{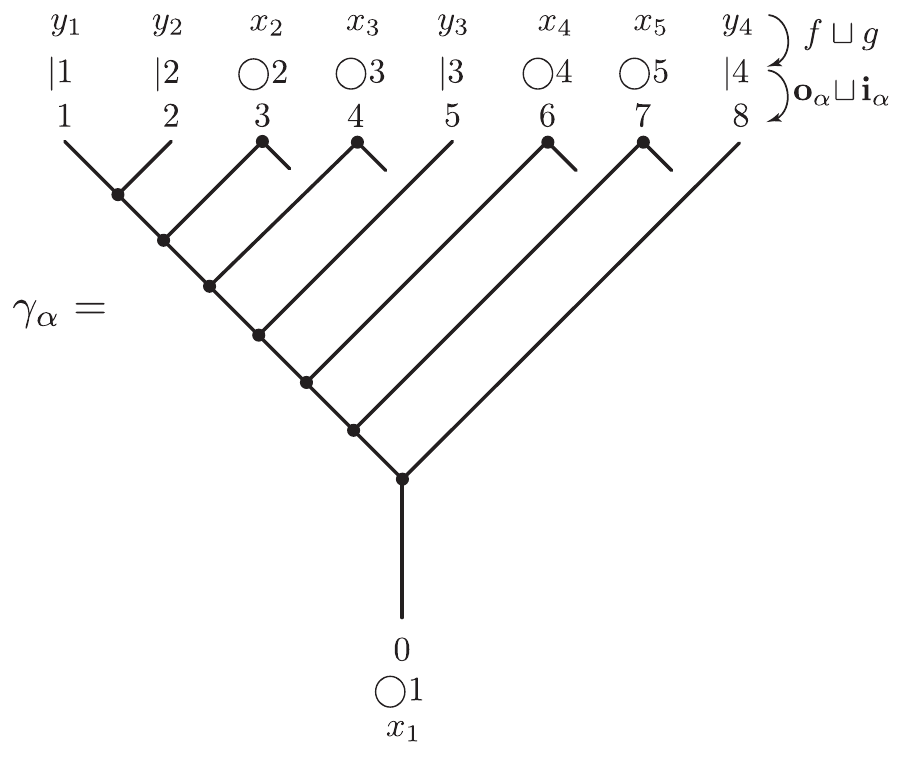}
\]
Thus, $\rho=(\gamma_\alpha)_{\tiny\begin{array}{c} f\\  g\end{array}}$ and $\tilde \rho=(\gamma_{\tilde \alpha})_{\tiny\begin{array}{c} \tilde f\\ \tilde g\end{array}}$ are just $\gamma_\alpha$ and $\gamma_{\tilde \alpha}$ with the particular labels from $f$, $g$, $\tilde f$, and $\tilde g$. Thus, by the description of the composition \eqref{EQN:y-circ-x-on-P} for sets, we need to calculate $(\gamma_\alpha \,_{g(y)}\circ_{\tilde f(\tilde x)}\gamma_{\tilde \alpha}))_{\tiny\begin{array}{c} f^\circ\\  g^\circ\end{array}}$. Since by assumption $\tilde f(\tilde x)=\ooo_{\tilde \alpha}^{-1}(\tilde\rho_{\ooo\iii}(\tilde x))=\ooo_{\tilde \alpha}^{-1}(0)=1$, it follows that we compose $\gamma_{\tilde \alpha}$ at the special first output of $\tilde \alpha$. For $\gamma_\alpha$, the position of the input composition is denoted by $p:=g(y)$, and assume that this $p$th input position occurs exactly between the $r$th and $(r+1)$st output position in the cyclic order of $\alpha$; see left side of \eqref{EQN:composing-gamma-alpha}. We obtain the following composition for $\gamma_\alpha \,_{g(y)}\circ_{\tilde f(\tilde x)}\gamma_{\tilde \alpha}=\gamma_\alpha \,_p\circ_1 \gamma_{\tilde \alpha}$:
\begin{equation}\label{EQN:composing-gamma-alpha}
\includegraphics[scale=.85]{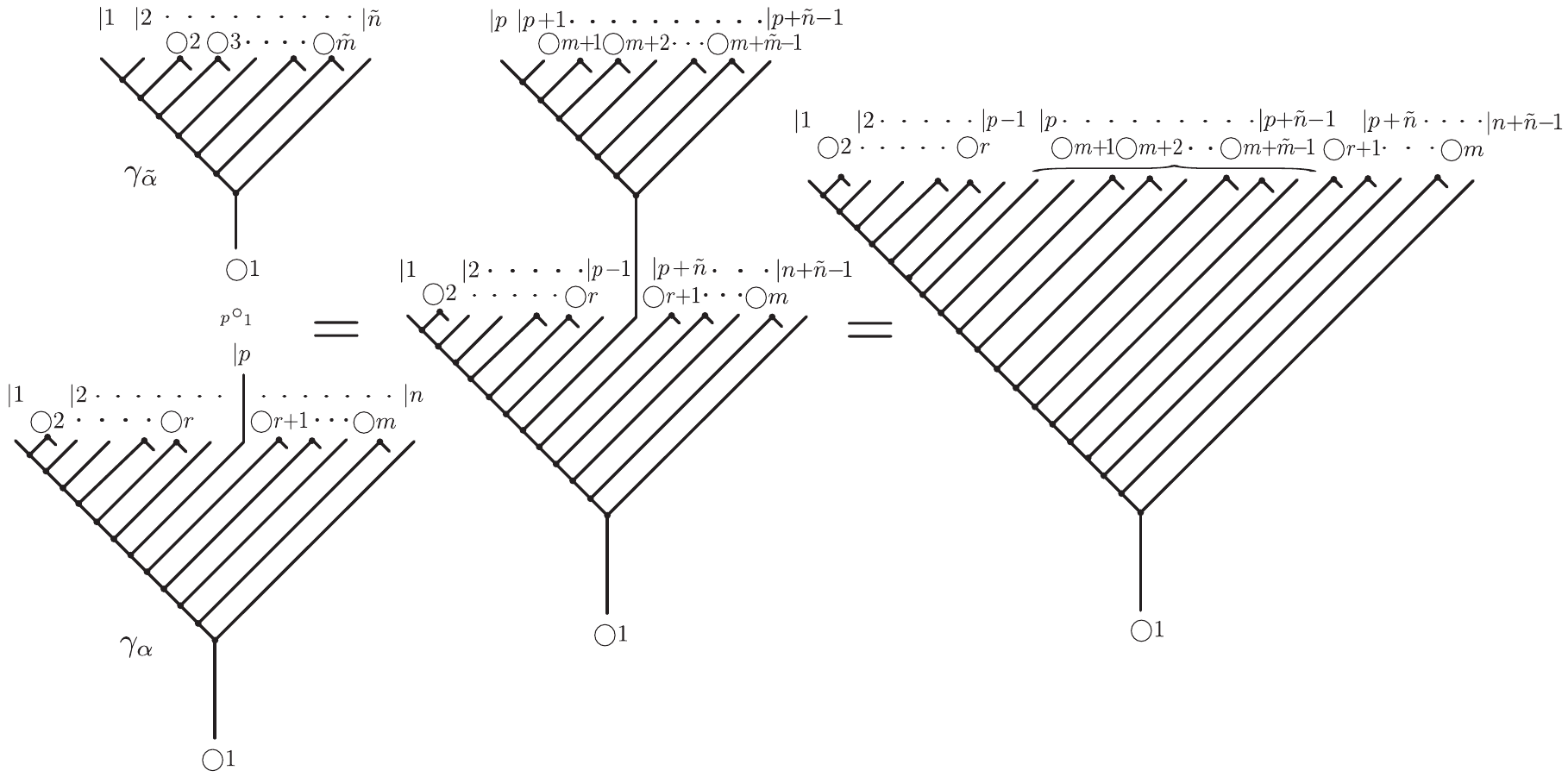}
\end{equation}
Notice, in particular, the new labeling of the outputs in $\gamma_\alpha \,_p\circ_1 \gamma_{\tilde \alpha}$ due to the fact that we composed at the first spot in $\gamma_{\tilde \alpha}$, so that all output labels of $\gamma_{\alpha}$ come before those of $\gamma_{\tilde \alpha}$, while the input labels arrange according to the cyclic order. Fortunately, the labeling of $f^\circ$ and $g^\circ$ as depicted in Figure \ref{FIG:composition-P(XY)} recover the wanted cyclic order on $\rho_y\circ_{\tilde x}\tilde \rho$:
\begin{equation}\label{EQN:composed-gamma-alpha}
\includegraphics[scale=.9]{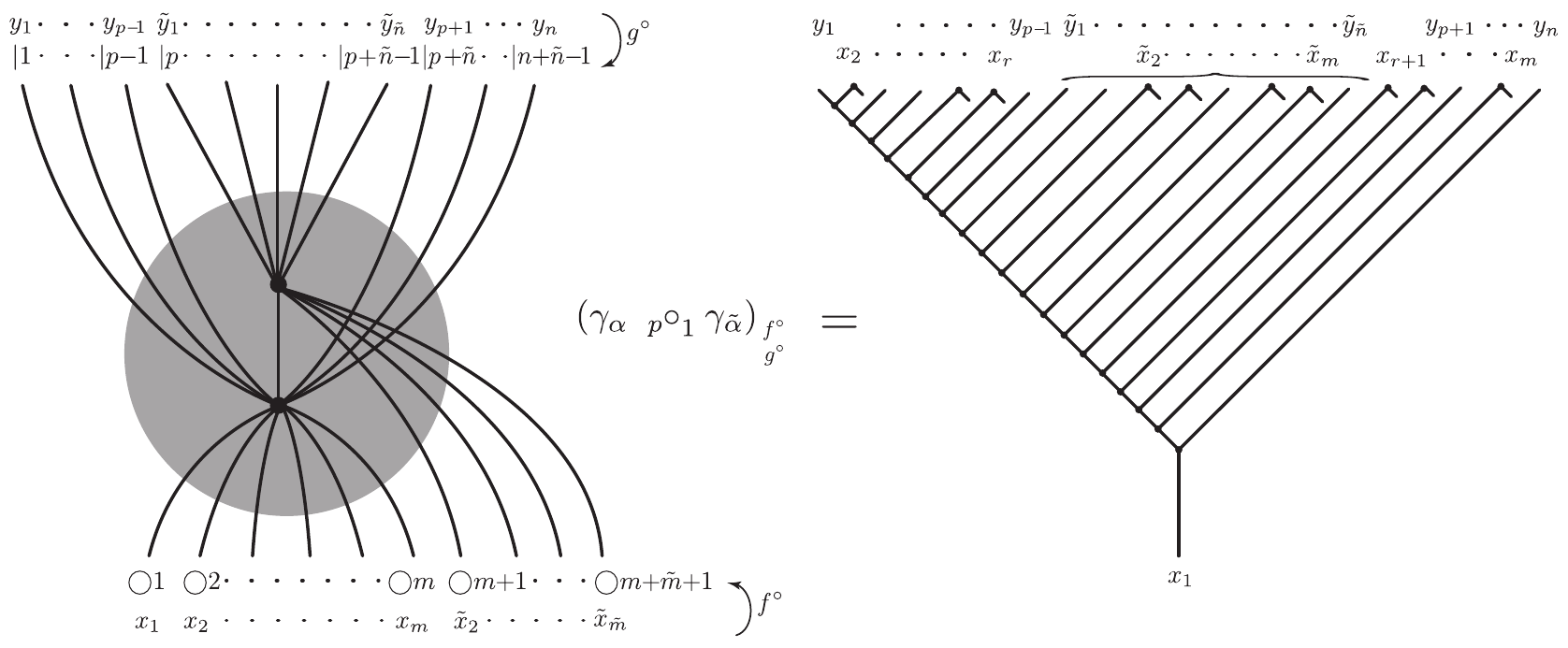}
\end{equation}
Comparing the cyclic orders (of $X$, $Y$, $\tilde X$, and $\tilde Y$ via $f$, $g$, $\tilde f$, and $\tilde g$) applied to $\gamma_\alpha$ and $\gamma_{\tilde \alpha}$ on the left hand side of \eqref{EQN:composing-gamma-alpha} with the cyclic order on the right of \eqref{EQN:composed-gamma-alpha} shows, that the composition $\mc V^{(d)}$ ``glues'' the cyclic orders of $\rho$ and $\tilde \rho$ as described in \eqref{EQN:dcirc-on-V(d)}. This completes the proof of \eqref{EQN:dcirc-on-V(d)}.
\end{proof}

\subsection{$\mc V_\infty^{(d)}$-algebras}

We end this section with a description of a $\mc V^{(d)}_\infty$-algebra.
\begin{defn}\label{DEF:V-infty-algebra}
If $\mc P$ is quadratic and Koszul, then define $\mc P_\infty:=\bD \mc P^!$. An algebra over the $\mc P_\infty$ dioperad is called a $\mc P_\infty$-algebra.
\end{defn}
In the case of $\mc P=\mc V^{(d)}$, we thus get $\mc V^{(d)}_\infty=\bD (\mc V^{(d)})^!=\bD \mc V^{(-d)}$. Since all operations of $\bD \mc V^{(-d)}$ are generated under composition by the corollas, one for each class $\llbracket\alpha,\ooo,\iii\rrbracket\in \mathfrak A(m,n)$, it follows that the data of a $\mc V^{(d)}_\infty$-algebra is given by a dg-vector space $(A,d_A)$ together with elements
$\mathfrak v_{\llbracket\alpha,\ooo,\iii\rrbracket}\in Hom(A^{\otimes n},A^{\otimes m})\cong A^{\otimes m}\otimes(A^*)^{\otimes n}$.
Choosing a representative $(\alpha,\ooo,\iii)$ determines a canonical isomorphism $\varrho_{(\alpha,\ooo,\iii)}:A^{\otimes m}\otimes(A^*)^{\otimes n}  \to A_{\alpha(0)}\otimes A_{\alpha(1)}\otimes \dots \otimes A_{\alpha(n+m-1)}$, where we set $A_{\ii}:=A^*$ and $A_{\oo}:=A$, and the isomorphism $\varrho_{(\alpha,\ooo,\iii)}$ rearranges the tensor factors using the usual Koszul sign rule according to the representative $(\alpha,\ooo,\iii)$ of $\llbracket\alpha,\ooo,\iii\rrbracket$. 
We define $\mathfrak{v}_{\alpha}$ as the image of $\mathfrak v_{\llbracket\alpha,\ooo,\iii\rrbracket}$ under this isomorphism:
\begin{equation}\label{EQU:v-alphas-domain}
\mathfrak v_{\alpha} = \varrho_{(\alpha,\ooo,\iii)}(\mathfrak v_{\llbracket\alpha,\ooo,\iii\rrbracket}) \in A_{\alpha(0)}\otimes A_{\alpha(1)}\otimes \dots \otimes A_{\alpha(n+m-1)}
\end{equation}
The elements $\mathfrak v_{\alpha}$ satisfy the following conditions:
\begin{itemize}
\item
\emph{Degree:} If $\alpha$ has $n$ input labels ``$\ii$'' and $m$ output labels ``$\oo$'', then $\mathfrak v_\alpha$ is of degree 
\begin{equation}\label{EQN:|v-alpha|}
 |\mathfrak v_{\alpha}|=4-n-2m+d\cdot (m-1). 
\end{equation}
\item
\emph{Symmetry condition:} The cyclic rotation $\tau$ (see Definition \ref{DEF:A(m,n)-gamma}) acts on the $\mathfrak v_\alpha$s via the usual Koszul sign rule: \begin{equation}
 \mathfrak v_{\alpha\circ \tau}=\tau(\mathfrak v_\alpha) 
\end{equation}
Here, the cyclic rotation $\tau:\Z_{n+m}\to \Z_{n+m}$ acts on the right by rotating tensor factors.
\item\label{EQU:v-a-boundary-condition}
\emph{Boundary condition:} If $\alpha$ has $n$ input labels ``$\ii$'' and $m$ output labels ``$\oo$'', then
\begin{equation}\label{EQN:dA(v-alpha)}
d_A(\mathfrak v_\alpha)=\sum_{\tiny\begin{array}{c}(m,n)\text{-tree$_0$ } T \\ \text{with one internal edge } e\\ \text{so that }\alpha'\circ_e \alpha''=\alpha \end{array}} (-1)^\epsilon \cdot \mathfrak v_{\alpha'}\circ \mathfrak v_{\alpha''},
\end{equation}
where ``$\circ$'' is the operation that contracts output of $\mathfrak v_{\alpha''}$ with the input of $\mathfrak v_{\alpha'}$ determined by the interior edge of $T$.
\end{itemize} 
To explain the sign $\epsilon$ appearing in \eqref{EQN:dA(v-alpha)}, we explicitly state  \eqref{EQN:dA(v-alpha)} in some examples.
\begin{ex}
Let $(A,d_A)$ be a $\mc V_\infty^{(d)}$-algebra, i.e. the elements $\mathfrak v_\alpha$ from \eqref{EQU:v-alphas-domain} are the images of a tree$_0$ with one internal vertex (corolla) labeled by $\gamma^*_\alpha$ under a dioperad map $f:\mc V_\infty^{(d)}\to \mc End_A$ composed with the appropriate $\varrho_{(\alpha,\ooo,\iii)}$. (The degree of $\mathfrak v_\alpha$ for an $\alpha$ with $n$ inputs and $m$ outputs is $|\mathfrak v_\alpha|=4-n-2m+d\cdot (m-1)$.) We calculate $d^\circ$ on such a corolla labeled by $\gamma^*_\alpha$ in $\bD(\mathcal V^{(-d)})(m,n)\cong \bigoplus_{(m,n)\text{-tree}_0\text{s } T} (\mc V^{(-d)})^*(T)\otimes  \bigwedge\,^{|\Int(T)|}(\kk[-1])^{\Int(T)} \otimes \Omega(m,n)$ for a summand whose vertices are labeled by $\gamma^*_{\alpha_1}$ and $\gamma^*_{\alpha_2}$ as follows:
\[
\includegraphics[scale=.89]{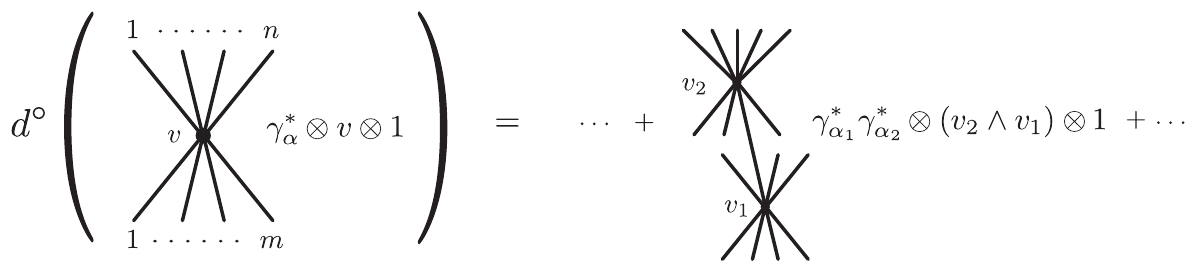}
\]
Here, the degrees of the elements stated above are $|v|=1$, $|1|=3-n-2m$ since $1=1_{\Omega(n,m)} \in \Omega(n,m)$, and $|\gamma^*_{\alpha}|=d(m-1)$ since $\gamma^*_{\alpha}\in (\mc V^{(-d)})^*(n,m)$ includes exactly $m-1$ co-inner products (see Figure \ref{FIG:gamma-alpha}). On the other hand, calculating the corresponding composition of corollas labeled by $\gamma^*_{\alpha_1}$ and $\gamma^*_{\alpha_2}$ gives
\[
\includegraphics[scale=.89]{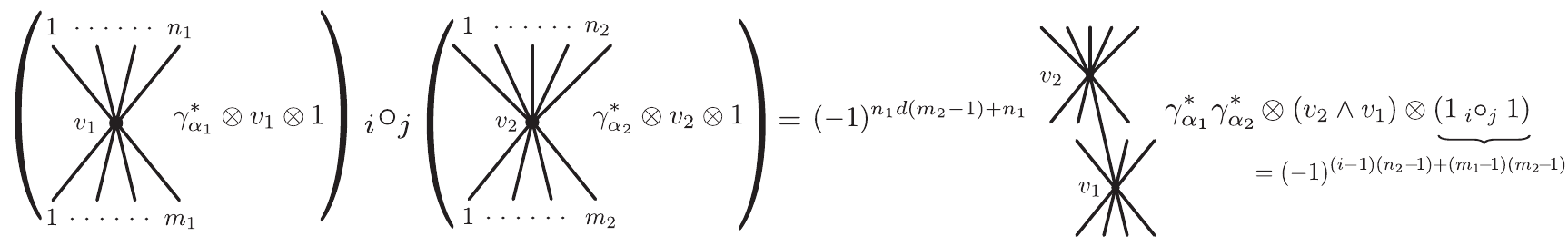}
\]
Here the degrees are $|\gamma^*_{\alpha_1}|=d\cdot (m_1-1)$, $|\gamma^*_{\alpha_2}|=d\cdot (m_2-1)$, $|v_1|=1$, $|v_2|=1$, $|1_{\Omega(m_1,n_1)}|=3-n_1-2m_1$, and $|1_{\Omega(m_2,n_2)}|=3-n_2-2m_2$. The overall sign on the right hand side comes from the usual Koszul sign rule, since both $\gamma^*_{\alpha_2}$ and $v_2$ move over $v_1$ and $1_{\Omega(m_1,n_1)}$. We thus get an overall sign of
\begin{equation}\label{EQU:signs-in-dA}
d_A(\mathfrak v_\alpha)=\dots +(-1)^{dn_1(m_2-1)+n_1+(i-1)(n_2-1)+(m_1-1)(m_2-1)} (\pi,\sigma).\mathfrak v_{\alpha_1}{}_i\circ_j \mathfrak v_{\alpha_2}+\dots.
\end{equation}

We evaluate this in three cases. Denote in particular, $\mathfrak m_1:=d_A$, $\mathfrak m_n:=\mathfrak v_{(\oo\ii\dots\ii)}\in A\otimes (A^*)^{\otimes n}$ for $n\geq 2$, $c:=\mathfrak v_{(\oo\oo)}\in A\otimes A$, $\mathfrak h:=\mathfrak v_{(\oo\oo\ii)}\in A\otimes A\otimes A^*$, and $\mathfrak l:=\mathfrak v_{(\oo\oo\oo)}\in A\otimes A\otimes A$.
\begin{enumerate}
\item
If $\alpha=(\oo\ii\dots \ii)$ with $n$ inputs, then necessarily $\alpha_1=(\oo\ii\dots \ii)$ with $n_1$ inputs, and $\alpha_2=(\oo\ii\dots \ii)$ with $n_2$ inputs, where $n_1+n_2-1=n$, and $m_1=m_2=j=1$. The sign in \eqref{EQU:signs-in-dA} thus becomes $(-1)^{n_1+(i-1)(n_2-1)}$. Applying this to elements $a_1,\dots, a_n\in A$, we obtain:
\begin{multline*}
\quad\quad [d_A,\mathfrak m_n](a_1,\dots, a_n) \\
=\sum_{n_1+n_2=n+1}\sum_i (-1)^{n_1+(i-1)(n_2-1)+n_2(|a_1|+\dots+|a_{i-1}|)} \mathfrak m_{n_1}(a_1,\dots, \mathfrak m_{n_2}(a_i,\dots, a_{i+n_2-1}),\dots, a_n)
\end{multline*}
Using that $n_1=n-n_2+1$, as well as writing $d_A=\mathfrak m_1$, and bringing all terms to the left, we obtain
\begin{multline*}
\quad\quad \mathfrak m_1(\mathfrak m_n(a_1,\dots, a_n))+\sum_i (-1)^{(|a_1|+\dots+|a_{i-1}|)+n-1} \mathfrak m_n(a_1,\dots, \mathfrak m_1(a_i), \dots, a_n)
\\
+\sum_{n_1+n_2=n+1}\sum_i (-1)^{n_2(|a_1|+\dots+|a_{i-1}|)+(i-1)(n_2-1)+n-n_2} \mathfrak m_{n_1}(a_1,\dots, \mathfrak m_{n_2}(a_i,\dots, a_{i+n_2-1}),\dots, a_n)=0
\end{multline*}
These are precisely the signs for an $A_\infty$-algebra as stated in \cite[Proposition 1.4]{T}; see also \cite[Example 3.132]{MSS}. \item
For $\alpha=(\oo\oo\ii)$ there are only two cases for $\alpha_1$ and $\alpha_2$ (see Equation \eqref{EQN:boundary-(2,1)}). Both cases have $\alpha_1=(\oo\ii\ii)$ and $\alpha_2=(\oo\oo)$ ($m_1=1$, $n_1=2$, $m_2=2$, $n_2=0$), with either $i=1$, $j=2$, or with $i=2$, $j=1$. We get (recall $\mathfrak h=\mathfrak v_{(\oo\oo\ii)}$ and $c=\mathfrak v_{(\oo\oo)}=\sum_i c'_i\otimes c''_i$):
\[
[d_A,\mathfrak h]=(\mathfrak m_2) {}_1\circ_2 c- (\mathfrak m_2) {}_2\circ_1 c
\]
or, applied to $a\in A$, 
\[
d_A(\mathfrak h(a)) + (-1)^d \cdot \mathfrak h (d_A(a)) =\sum_i c'_i\otimes \mathfrak m_2(c''_i,a) -\sum_i (-1)^{|a|\cdot (|c'_i|+|c''_i|)} \mathfrak m_2(a,c'_i)\otimes c''_i.
\]
\item
In the case $\alpha=(\oo\oo\oo)$, we get three possible compositions of $\alpha_1$ and $\alpha_2$ that give $\alpha$, all of which with $\alpha_1=(\oo\oo\ii)$ and $\alpha_2=(\oo\oo)$ ($m_1=2$, $n_1=1$, $m_2=2$, $n_2=0$) and $i=j=1$. These compositions appear in the boundary of $\mathfrak l=\mathfrak v_{(\oo\oo\oo)}\in A^{\otimes 3}$ as
\[
 d_A (\mathfrak l)=(-1)^d \mathfrak h {\:}_1{\!}\circ_1 c+(-1)^d \pi. (\mathfrak h {\:}_1{\!}\circ_1 c)+(-1)^d \pi^2.(\mathfrak h {\:}_1{\!}\circ_1 c),
\]
where $\pi=(123)\in \Ss_3$; see \cite[Figure 25]{TZ} or \cite[Figure 15]{PT}. Using the components for $c=\sum_i c'_i\otimes c''_i$ as well as $\mathfrak h(c'_i)\otimes c''_i=\sum_j (c'_i)'_j\otimes (c'_i)''_j\otimes c''_i$, this may also be written as
\begin{multline*}
\quad\quad d_A(\mathfrak l)=
(-1)^d\cdot \sum_i \sum_j \Big((c'_i)'_j\otimes (c'_i)''_j\otimes c''_i
\\
+(-1)^{|c''_i|(|(c'_i)'_j|+|(c'_i)''_j|)}c''_i\otimes (c'_i)'_j\otimes (c'_i)''_j+ (-1)^{|(c'_i)'_j|(|(c'_i)''_j|+|c''_i|)}(c'_i)''_j\otimes c''_i\otimes (c'_i)'_j\Big).
\end{multline*}
\end{enumerate}
\end{ex}
The next lemma compares the data from Definition \ref{DEF:V-infty-algebra} to the notion of a $V_\infty$-algebra from \cite[Definition 3.1]{TZ}.
\begin{lem}\label{LEM:V-infty-algebra-comparison}
If $d$ is even, then the data of a $\mc V^{(d)}_\infty$-algebra is the same as that of a $V_\infty$-algebra in the sense of \cite[Definition 3.1]{TZ}. If $d$ is odd, then the two concepts differ at most by the signs in the boundary condition \eqref{EQN:dA(v-alpha)}.
\end{lem}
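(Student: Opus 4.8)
The plan is to prove the lemma by a direct, item-by-item comparison of the defining data of a $\mc V^{(d)}_\infty$-algebra, as spelled out above in Definition \ref{DEF:V-infty-algebra} and the surrounding discussion, against the data of a $V_\infty$-algebra in \cite[Definition 3.1]{TZ}. Both structures consist of a dg-vector space $(A,d_A)$ together with a family of operations indexed by the \emph{same} combinatorial objects, namely the cyclically ordered labeled sequences $\alpha$ (equivalently the classes $\llbracket\alpha,\ooo,\iii\rrbracket\in\mathfrak A(m,n)$). So the first step is to fix the dictionary identifying the operations $\mathfrak v_\alpha$ of \eqref{EQU:v-alphas-domain} with the corresponding generating operations of \cite{TZ}, after which the three conditions imposed on the $\mathfrak v_\alpha$ (degree, cyclic symmetry, and boundary) are to be matched against the analogous three conditions there.

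First I would dispose of the degree condition \eqref{EQN:|v-alpha|} and the symmetry condition $\mathfrak v_{\alpha\circ\tau}=\tau(\mathfrak v_\alpha)$. Both are uniform in $d$: the degree formula $4-n-2m+d(m-1)$ is exactly the one recorded in \cite{TZ}, and the cyclic-invariance relation is the same Koszul-sign rotation in both sources. Hence these two pieces of data agree for every $d$ and contribute nothing to any discrepancy between the even and odd cases; in particular the operations $\mathfrak v_\alpha$ sit in the same degrees under either definition.

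The substance of the comparison is therefore the boundary condition \eqref{EQN:dA(v-alpha)}. Here I would first observe that the \emph{combinatorics} of the two structure equations coincide verbatim: in both cases $d_A(\mathfrak v_\alpha)$ is a sum over the $(m,n)$-tree$_0$s $T$ with a single internal edge $e$ whose two vertices carry $\mathfrak v_{\alpha'}$ and $\mathfrak v_{\alpha''}$ with $\alpha'\circ_e\alpha''=\alpha$, the composition $\circ$ contracting the designated output with the designated input. What remains is to compare the signs $(-1)^\epsilon$. Using the explicit expression \eqref{EQU:signs-in-dA}, the exponent decomposes as $dn_1(m_2-1)+\big(n_1+(i-1)(n_2-1)+(m_1-1)(m_2-1)\big)$, where the parenthesized part is precisely the sign occurring in \cite{TZ} and is independent of $d$, so that the only $d$-dependent contribution is the factor $(-1)^{dn_1(m_2-1)}$.

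This last factor is the crux. When $d$ is even, $dn_1(m_2-1)$ is even, so $(-1)^{dn_1(m_2-1)}=+1$ and every boundary sign reduces exactly to the one in \cite{TZ}; combined with the already-verified agreement of degrees and symmetry, this shows the data of a $\mc V^{(d)}_\infty$-algebra and of a $V_\infty$-algebra coincide, giving the first assertion. When $d$ is odd, the factor becomes $(-1)^{n_1(m_2-1)}$, which is the sole place where the two sets of conditions can disagree; since degrees, symmetry, and the combinatorial shape of \eqref{EQN:dA(v-alpha)} are unchanged, the two concepts differ at most by these boundary signs, giving the second assertion. The main obstacle I anticipate is purely bookkeeping: one must unwind the Koszul-sign conventions in both \cite{TZ} and the present paper carefully enough to be certain that, after the identifications above, the $d$-independent remainder of $\epsilon$ matches the \cite{TZ} sign on the nose and that no further parity-sensitive term is hidden in the rearrangement isomorphisms $\varrho_{(\alpha,\ooo,\iii)}$.
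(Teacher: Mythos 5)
There is a genuine gap, and it sits exactly at the step you defer to ``bookkeeping.'' Your isolation of the $d$-dependence is correct and matches the paper's starting point: from \eqref{EQU:signs-in-dA} the exponent is $dn_1(m_2-1)+n_1+(i-1)(n_2-1)+(m_1-1)(m_2-1)$, and the only $d$-sensitive term is $dn_1(m_2-1)$, so the second assertion of the lemma (odd $d$ differs at most by boundary signs) does follow from your argument. But your claim that the parenthesized remainder $n_1+(i-1)(n_2-1)+(m_1-1)(m_2-1)$ ``is precisely the sign occurring in \cite{TZ}'' is false as stated: in \cite[Definition 3.1]{TZ} the boundary condition carries \emph{no} explicit signs at all --- the only signs there are Koszul signs. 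So for even $d$ the two structure equations do \emph{not} literally coincide under the naive identity dictionary $\mathfrak v_\alpha \leftrightarrow v_{i_1,\dots,i_k}$, and the content of the lemma is to produce a non-identity dictionary under which they do. The paper does this in two steps: first it rescales, setting $\widetilde{\mathfrak v_\alpha}:=(-1)^{m(m-1)/2}\mathfrak v_\alpha$, which kills the $(m_1-1)(m_2-1)$ term because $\tfrac{m(m-1)}{2}=(m_1-1)(m_2-1)+\tfrac{m_1(m_1-1)}{2}+\tfrac{m_2(m_2-1)}{2}$ when $m=m_1+m_2-1$; second it suspends the inputs, setting $\widetilde{\widetilde{\mathfrak v_\alpha}}:=\widetilde{\mathfrak v_\alpha}\circ(s\otimes\cdots\otimes s)$ with $s:A[1]\to A$, and verifies that the suspensions generate exactly the residual sign $n_1+(i-1)(n_2-1)$ (via $\epsilon=n_1+i-1+(i-1)n_2$, which agrees mod $2$), so that the shifted operations compose with no extra signs --- matching \cite{TZ}. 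This rescaling-plus-suspension argument is the heart of the proof, not routine unwinding, and it is entirely absent from your proposal.

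A secondary inaccuracy: you assert the degree formula $4-n-2m+d(m-1)$ is ``exactly the one recorded in \cite{TZ}.'' It is not; the degrees in \cite{TZ} are the \emph{negatives} of these (there $v_{i_1,\dots,i_k}$ has degree $k(2-d)+(d-4)+(i_1+\cdots+i_k)=-\bigl(4-(i_1+\cdots+i_k)-2k+d(k-1)\bigr)$, and the differential $v_1$ has degree $-1$ rather than $+1$). One must flip the grading of $A$ (replace $A^i$ by $A^{-i}$) before the degree conditions agree, and this grading flip is part of the dictionary you would need to fix at the outset.
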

\begin{proof}
In \cite[Definition 3.1]{TZ}, the differential of $A$ was $v_1\in A\otimes A^*$, which is of degree $-1$, so that the choice made in \cite{TZ} is contrary to the one stated in Convention \ref{CONVENTIONS} where the differential has degree $+1$. Elements $v_{i_1,\dots,i_k}\in A\otimes (A^*)^{\otimes i_1}\otimes\dots\otimes A\otimes (A^*)^{\otimes i_k}$ in \cite{TZ} are of degree $k(2-d)+(d-4)+(i_1+\dots+i_k)$, which is equal to $-(4-(i_1+\dots +i_k)-2k+d(k-1))$, where $k$ is the number of ``$A$'' tensor factors (outgoing), and $(i_1+\dots +i_k)$ is the number of ``$A^*$'' tensor factors (incoming). Thus, switching the grading  of $A$ in \cite[Definition 3.1]{TZ} to its negative (i.e. setting degree $i$ to be $A^{-i}$), matches with our Convention \ref{CONVENTIONS} as well as the degree condition \eqref{EQN:|v-alpha|} for a $\mc V^{(d)}_\infty$-algebra $A$. Furthermore, the symmetry condition here matches the one in \cite{TZ}.

For the boundary condition \eqref{EQN:dA(v-alpha)}, note that, up to signs, the term appearing in the sum \eqref{EQN:dA(v-alpha)} are all choices of two allowable vertex terms whose composition gives the cyclic order of the $\alpha$ on the left. These are precisely the terms appearing in the boundary condition of \cite[Definition 3.1]{TZ}.

To check that the signs in \eqref{EQN:dA(v-alpha)} and \cite[Definition 3.1]{TZ} coincide, we now assume that $d$ is even. The signs of \eqref{EQN:dA(v-alpha)} were calculated on a summand in Equation \eqref{EQU:signs-in-dA}, which for even $d$ reduces to $d_A(\mathfrak v_\alpha)=\dots +(-1)^{n_1+(i-1)(n_2-1)+(m_1-1)(m_2-1)} (\pi,\sigma).\mathfrak v_{\alpha_1}{}_i\circ_j \mathfrak v_{\alpha_2}+\dots$. We can eliminate the ``$(m_1-1)(m_2-1)$''-part of the sign by redefining $\widetilde {\mathfrak v_\alpha}:=(-1)^{\frac{m(m-1)}{2}}\mathfrak v_\alpha$ for any $\alpha$ with $m$ outputs. Using this, the $\widetilde {\mathfrak v_\alpha}$ satisfy
\begin{equation}\label{EQU:sign-in-v-tilde}
d_A(\widetilde {\mathfrak v_\alpha})=\dots +(-1)^{n_1+(i-1)(n_2-1)} (\pi,\sigma).\widetilde {\mathfrak v_{\alpha_1}}{}_i\circ_j \widetilde {\mathfrak v_{\alpha_2}}+\dots,
\end{equation}
since for $m=m_1+m_2-1$ we have $\frac{m(m-1)}{2}=(m_1-1)(m_2-1)+\frac{m_1 (m_1-1)}{2}+\frac{m_2 (m_2-1)}{2}$. The remaining part of the signs come from shifts of the input tensor factors of $\widetilde {\mathfrak v_\alpha}\in Hom(A^{\otimes n},A^{\otimes m})\cong A^{\otimes m}\otimes (A^*)^{\otimes n}$ as stated in \cite[Definition 3.1]{TZ}. In fact, for the map $s:A[1]\to A$, $s(a):=a$ which shifts up by $1$, we define $\widetilde{\widetilde {\mathfrak v_\alpha}}:=\widetilde {\mathfrak v_\alpha}\circ (s\otimes \dots\otimes s)\in Hom((A[1])^{\otimes n},A^{\otimes m})$, where we note that the degree of $s$ is $|s|=1$, and $|\widetilde {\mathfrak v_\alpha}|=4-n-2m+d(m-1)\equiv n \,\,(mod\,\, 2)$. To keep track of the tensor factors to which $s$ applies, we will use the notation $\widetilde{\widetilde {\mathfrak v_\alpha}}=\widetilde {\mathfrak v_\alpha}\circ (s^{\{1\}}\otimes \dots\otimes s^{\{n\}})$, where $s^{\{i\}}$ applies to the $i$th $A[1]$-tensor factor. We claim that these shifted $\widetilde{\widetilde {\mathfrak v_\alpha}}$s now compose as defined in \cite[Definition 3.1]{TZ}. More precisely, the composition $\widetilde{\widetilde {\mathfrak v_{\alpha_1}}}{}_i\circ_j \widetilde{\widetilde {\mathfrak v_{\alpha_2}}} = \Big(\widetilde {\mathfrak v_{\alpha_1}} \circ (s_1^{\{1\}}\otimes \dots\otimes s_1^{\{n_1\}})\Big){}_i\circ_j \Big(\widetilde {\mathfrak v_{\alpha_2}} \circ (s_2^{\{1\}}\otimes \dots\otimes s_2^{\{n_2\}})\Big) $ is given by first removing the shift $s_1^{\{i\}}$ (which would receive an output of $\widetilde {\mathfrak v_{\alpha_2}}$ in $A$ instead of $A[1]$ and thus needs no shift, and which costs a sign of $(-1)^{n_1+i-1}$ when removing $s_1^{\{i\}}$ from the left), and then rearranging the shifts in the order of the inputs. This yields
\begin{equation*}
\widetilde{\widetilde {\mathfrak v_{\alpha_1}}}{}_i\circ_j \widetilde{\widetilde {\mathfrak v_{\alpha_2}}}= (-1)^{\epsilon} \cdot (\widetilde {\mathfrak v_{\alpha_1}} {}_i\circ_j \widetilde {\mathfrak v_{\alpha_2}})\circ (s_1^{\{1\}}\otimes \dots\otimes s_1^{\{i-1\}}\otimes s_2^{\{1\}}\otimes \dots\otimes s_2^{\{n_2\}}\otimes s_1^{\{i+1\}}\otimes \dots\otimes s_1^{\{n_1\}}),
\end{equation*}
where $\epsilon=n_1+i-1+(i-1)n_2$, since $s_1^{\{1\}}\otimes \dots\otimes s_1^{\{i-1\}}$ moved over $\widetilde {\mathfrak v_{\alpha_2}}$, while $s_1^{\{i+1\}}\otimes \dots\otimes s_1^{\{n_1\}}$ moved over all of $ \widetilde{\widetilde {\mathfrak v_{\alpha_2}}}$ of even degree. Comparing this to \eqref{EQU:sign-in-v-tilde}, we see that in this shifted setting, $d_A(\widetilde{\widetilde {\mathfrak v_\alpha}})=\dots +(\pi,\sigma).\widetilde{\widetilde {\mathfrak v_{\alpha_1}}}{}_i\circ_j \widetilde{\widetilde {\mathfrak v_{\alpha_2}}}+\dots$ without applying any further signs. This is just as the $v_{i_1,\dots i_n}$ compose in \cite[Definition 3.1]{TZ}, i.e. the only additional signs come from the Koszul sign rule. This shows that the $\widetilde{\widetilde {\mathfrak v_{\alpha}}}$ satisfy the conditions from \cite[Definition 3.1]{TZ} including the signs.
\end{proof}
We close this section by recalling the main theorem from \cite{TZ}, which states that $V_\infty$-algebras permit a string topology-type action (cf. \cite{CS}) of a graph complex $\mc{DG}^\bullet_\infty$ on the cyclic Hochschild complex $CC^\bullet(A)$ of $A$. The last lemma thus implies the following corollary.
\begin{cor}[\cite{TZ}, Theorem 4.3]\label{COR:DG-action}
If $d$ is even, and $A$ is a $\mc V^{(d)}_\infty$ algebra, then there is a map of PROPs $\mc{DG}^\bullet_\infty\to \mc End(CC^\bullet(A))$.
\end{cor}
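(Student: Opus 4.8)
The plan is to obtain this as an immediate consequence of Lemma \ref{LEM:V-infty-algebra-comparison} together with the cited Theorem 4.3 of \cite{TZ}. The key observation is that all of the genuine content — matching not merely the underlying structure maps but also the signs appearing in the boundary condition \eqref{EQN:dA(v-alpha)} — has already been dispatched in the proof of Lemma \ref{LEM:V-infty-algebra-comparison} under the hypothesis that $d$ is even. Thus the corollary is a citation-chaining argument, and I would keep the proof short.

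First I would invoke Lemma \ref{LEM:V-infty-algebra-comparison} in the even case: since $d$ is assumed even, a $\mc V^{(d)}_\infty$-algebra structure on a dg-vector space $A$ is \emph{precisely} a $V_\infty$-algebra structure in the sense of \cite[Definition 3.1]{TZ}. Concretely, the collection $\{\mathfrak v_\alpha\}$ from \eqref{EQU:v-alphas-domain}, after the rescaling $\widetilde{\mathfrak v_\alpha}$ and the suspension $\widetilde{\widetilde{\mathfrak v_\alpha}}$ introduced in that proof, is exactly the data $\{v_{i_1,\dots,i_k}\}$ of a $V_\infty$-algebra, with matching degree \eqref{EQN:|v-alpha|}, symmetry, and boundary conditions. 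Hence the hypothesis ``$A$ is a $\mc V^{(d)}_\infty$-algebra'' furnishes, verbatim, a $V_\infty$-algebra on $A$. I would then apply \cite[Theorem 4.3]{TZ}, which asserts that for any $V_\infty$-algebra $A$ there is a map of PROPs $\mc{DG}^\bullet_\infty\to \mc End(CC^\bullet(A))$ realizing the string-topology-type action of the directed-graph complex on the cyclic Hochschild complex $CC^\bullet(A)$. Feeding the $V_\infty$-structure produced in the previous step into this theorem yields exactly the desired PROP map, completing the argument.

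The only ``obstacle'' is not located in this corollary at all, but upstream: the subtle point is the sign bookkeeping involved in passing between the two sets of structure maps, and that is precisely what Lemma \ref{LEM:V-infty-algebra-comparison} settles. I would close by remarking that the evenness of $d$ is exactly what guarantees these signs agree — for odd $d$ the two notions differ by signs (again by Lemma \ref{LEM:V-infty-algebra-comparison}), so the clean identification, and hence the direct appeal to \cite[Theorem 4.3]{TZ}, would no longer be available without further care. This explains why the hypothesis ``$d$ even'' is imposed in the statement.
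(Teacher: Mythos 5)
Your proposal is correct and matches the paper's own argument exactly: the paper derives this corollary by combining Lemma \ref{LEM:V-infty-algebra-comparison} (which, for $d$ even, identifies $\mc V^{(d)}_\infty$-algebra structures with $V_\infty$-algebra structures in the sense of \cite[Definition 3.1]{TZ}) with \cite[Theorem 4.3]{TZ}, precisely the citation chain you describe. Your closing remark about the role of the evenness hypothesis also reflects the paper's reasoning, so there is nothing to add.
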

We refer to \cite{TZ} for further details of this action. 

\section{Structures related to $\mc V^{(d)}$}\label{SEC:related-structures}

In this section we consider two concepts closely related to the dioperad $\mc V^{(d)}$. First, in section \ref{SEC:V-properad}, we look at the corresponding properad $\mc F_{dioperad}^{properad}(\mc V^{(d)})$ under the map described in \cite[Section 5.6.]{MV}, and we show that this properad is not contractible Koszul. Second, in section \ref{SEC:W-dioperad}, we consider a version of $\mc V^{(d)}$ with anti-symmetric co-inner product.

\subsection{$\mc V^{(d)}$ as a properad}\label{SEC:V-properad}

In this section, we freely use the notation defined by Merkulov and Vallette in \cite[Section 5.6.]{MV} to compare dioperadic and properadic structures. If $\mc U_{dioperad}^{properad}:\text{Properads}\to \text{Dioperads}$ is the forgetful functor, then its left adjoint is denoted by $\mc F_{dioperad}^{properad}:\text{Dioperads}\to \text{Properads}$. By \cite[Corollary 45]{MV}, $\mc F_{dioperad}^{properad}(\mc V^{(d)})$ is the quadratic properad given by the (properad) generators $E_{\mc V^{(d)}}$ and relations $R_{\mc V^{(d)}}$ described in Definition \ref{DEF:V-dioperad0}. We define the properad $\mc {PV}^{(d)}:=\mc F_{dioperad}^{properad}(\mc V^{(d)})$. We will show that $\mc {PV}^{(d)}$ is not Koszul contractible.

Recall from \cite[Proposition 48]{MV}, that $\mc {PV}^{(d)}$ is Koszul contractible iff the resolution $\bD\mc V^{(-d)}\to \mc V^{(d)}$ induces a quasi-isomorphism when applying $\mc F_{dioperad}^{properad}$, i.e. iff the map $\mc F_{dioperad}^{properad}(\bD\mc V^{(-d)})\to \mc F_{dioperad}^{properad}(\mc V^{(d)})=\mc {PV}^{(d)}$ is a quasi-isomorphism.
\begin{prop}\label{PROP:V-not-Koszul}
The properad $\mc {PV}^{(d)}$ is not Koszul contractible.
\end{prop}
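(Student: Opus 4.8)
The plan is to show directly that the comparison map $\mc F_{dioperad}^{properad}(\bD\mc V^{(-d)})\to \mc{PV}^{(d)}$ fails to be a quasi-isomorphism, which by \cite[Proposition 48]{MV} is exactly the assertion. Both sides are graded by the genus of the underlying connected graph, and the map preserves this grading, so it suffices to find a single genus in which it is not a quasi-isomorphism. In genus $0$ the two sides reduce to the dioperadic statement $\bD\mc V^{(-d)}\to\mc V^{(d)}$, which is a quasi-isomorphism by Theorem \ref{THM:V-Koszul}; hence any failure must occur in positive genus, and I will locate it already in genus $1$.

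First I would record the relevant degree bookkeeping. A connected genus-$g$ graph built from the generators which lands in bidegree $(m,n)$ must use exactly $p=m+g-1$ copies of the co-inner product $\nu$ and $q=n+m+2g-2$ copies of the product $\mu$ (solve $m=2p+q-E$, $n=2q-E$, $E=p+q-1+g$). Since $\mu$ has degree $0$ and $\nu$ has degree $d$, the classical properad $\mc{PV}^{(d)}(m,n)$, which carries the zero differential, is concentrated in the single internal degree $(m+g-1)d$ in each genus $g$. Consequently, were the map a quasi-isomorphism, the genus-$g$ summand of $\mc F_{dioperad}^{properad}(\bD\mc V^{(-d)})(m,n)$ would have its homology concentrated in that one degree. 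My strategy is to contradict this concentration in genus $1$.

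Next I would analyze the genus-$1$ complex in a smallest nontrivial bidegree, for concreteness $(m,n)=(2,0)$, where $p=q=2$; here $\mc{PV}^{(d)}(2,0)$ is nonzero (e.g.\ the $4$-cycle $\nu_1\!-\!\mu_1\!-\!\nu_2\!-\!\mu_2$) and sits in degree $2d$. Because $\mc F_{dioperad}^{properad}$ sends the quasi-free dioperad $\bD\mc V^{(-d)}$ to the free properad on the same generators with the transported differential, the differential $d^\circ$ only expands a single vertex into a tree (a genus-$0$ move) and never creates or resolves the unique cycle. Thus, after grouping the genus-$1$ graphs by their cyclic (``necklace'') core, the complex should be identified with a loop-closure, i.e.\ a categorical trace, of a tensor product of assocoipahedron cellular chains $C_\bullet(Z_\alpha)$ along the two legs created by cutting the cycle; each such $C_\bullet(Z_\alpha)$ is contractible by \cite{PT} and Claim \ref{CLM:linear-iso}. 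The point is that closing a contractible complex up along a pair of legs behaves like passing from an interval to a circle: it produces homology in two adjacent degrees rather than one. I would make this precise by writing the genus-$1$ complex as the mapping cone of $(\mathrm{id}-\phi)$, where $\phi$ is the monodromy of going once around the loop, and reading off that $H_\bullet$ is supported in at least two distinct degrees whenever $\mathrm{id}-\phi$ fails to be invertible on the (top-degree) homology of the cut-open complex.

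The main obstacle is exactly this last computation: enumerating the genus-$1$ graphs in the chosen bidegree, matching them via Claim \ref{CLM:linear-iso} to decorated assocoipahedron cells, and tracking the Koszul signs of $d^\circ$ carefully enough both to identify the complex with an explicit trace and to verify that the monodromy $\phi$ has a fixed vector (equivalently, that $\mathrm{id}-\phi$ is not invertible), so that the homology genuinely spreads over two degrees. Here I should also note that the exclusions $(1,0),(1,1)\notin\JJ$ remove the naive trace $\mu\circ\nu$ but not the genus-$1$ operations in $(2,0)$ and $(1,2)$, so the obstruction survives. Once the genus-$1$ homology is shown to be supported in more than one internal degree, it cannot agree with the single-degree space $\mc{PV}^{(d)}(2,0)$, whence the map is not a quasi-isomorphism and $\mc{PV}^{(d)}$ is not Koszul contractible.
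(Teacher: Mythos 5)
Your overall strategy coincides with the paper's: reduce via \cite[Proposition 48]{MV} to showing that $\mc F_{dioperad}^{properad}(\bD\mc V^{(-d)})\to \mc{PV}^{(d)}$ is not a quasi-isomorphism, note that the target carries zero differential and is concentrated in a single internal degree in each genus, and hunt for ``higher'' homology in the genus-$1$ part of bidegree $(2,0)$ --- exactly the summand the paper analyzes. Your vertex/edge bookkeeping ($p=m+g-1$ copies of $\nu$, $q=n+m+2g-2$ copies of $\mu$, hence internal degree $(m+g-1)d$) is correct, as is the observation that the transported differential acts vertex-locally and never creates or resolves the cycle.

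The gap is that the one step that actually constitutes the proof --- exhibiting a nonzero homology class outside the top vertex degree --- is not carried out, and the structural device you propose for it (writing the genus-$1$ complex as the mapping cone of $\mathrm{id}-\phi$ for a monodromy $\phi$ of the cut-open complex) is itself unjustified and genuinely delicate. The cut is not canonical: the loop of a genus-$1$ graph can be severed at any of its edges, the number of edges around the loop varies from graph to graph within the complex, and identifying the closure with a trace must be done equivariantly with respect to this ambiguity (the same subtlety that prevents cyclic homology from being a naive mapping cone on Hochschild chains). You flag this yourself (``the main obstacle is exactly this last computation''), so as written the argument is a plausible plan rather than a proof. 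The paper closes exactly this gap by direct enumeration: the genus-$1$ complex in bidegree $(2,0)$ is identified, up to sign, with the cellular chain complex of a disjoint union of a cylinder and a disk (Figure \ref{FIG:FDV1(2,0)}), and the cylinder component carries the nontrivial $1$-cycle $\xi_1+\xi_2$, which is the required higher homology class. Your monodromy heuristic is at least consistent with that answer (a cylinder is the mapping torus of an identity map), but to complete your proof you would either have to establish the trace identification rigorously --- including the fixed-vector claim for $\phi$ --- or simply perform the explicit enumeration of graphs as the paper does.
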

The proof is given below. Recall from \cite[Section 5.6.]{MV} that a Koszul contractible properad $\mc P$ is also Koszul as a properad, however the converse is not true. (In fact, by \cite[Remark below Corollary 51]{MV}, $\mc Frob_\diamond$ is Koszul but not Koszul contractible. On the other hand, by \cite[Text below Proposition 48, and Theorem 53]{MV}, $\mc{NC}\text{-}\mc Frob$ is neither Koszul nor Koszul contractible, even though its associated dioperad is Koszul as a dioperad.) We can thus still ask the following question.
\begin{question}
Is $\mc {PV}^{(d)}$ Koszul as a properad?
\end{question}

\begin{proof}[Proof of Proposition \ref{PROP:V-not-Koszul}]
Since $\bD \mc V^{(-d)}$ is a free dioperad generated by $(\mc {V}^{(-d)})^*$ (ignoring shifts and the extra differential $d^\circ$), it follows from \cite[Corollary 45]{MV}, that $\mc F_{dioperad}^{properad}(\bD\mc V^{(-d)})$ is also a free properad on the same generators (up to shift and extra differential). If $\mc {PV}^{(d)}$ were Koszul contractible, then $\mc F_{dioperad}^{properad}(\bD\mc V^{(-d)})$ cannot have any ``higher'' homology. We show this is false by explicitly computing $(\mc F_{dioperad}^{properad}(\bD\mc V^{(-d)}))^{\text{[}genus=1\text{]}}(2,0)$, which is the genus $1$ part of the space with $0$ inputs and $2$ outputs of $\mc F_{dioperad}^{properad}(\bD\mc V^{(-d)})$. In fact, we show that $(\mc F_{dioperad}^{properad}(\bD\mc V^{(-d)}))^{\text{[}genus=1\text{]}}(2,0)$ is isomorphic to the cellular chains of a disjoint union of a cylinder and a disk (given by a certain cellular decomposition). The complete space $(\mc F_{dioperad}^{properad}(\bD\mc V^{(-d)}))^{\text{[}genus=1\text{]}}(2,0)$ is displayed in Figure \ref{FIG:FDV1(2,0)}. In particular, the cylinder induces a non-trivial ``$1$''-cycle $\xi_1+\xi_2$, which induces a ``higher'' homology class.
\begin{figure}[h]
\[
\includegraphics[scale=1]{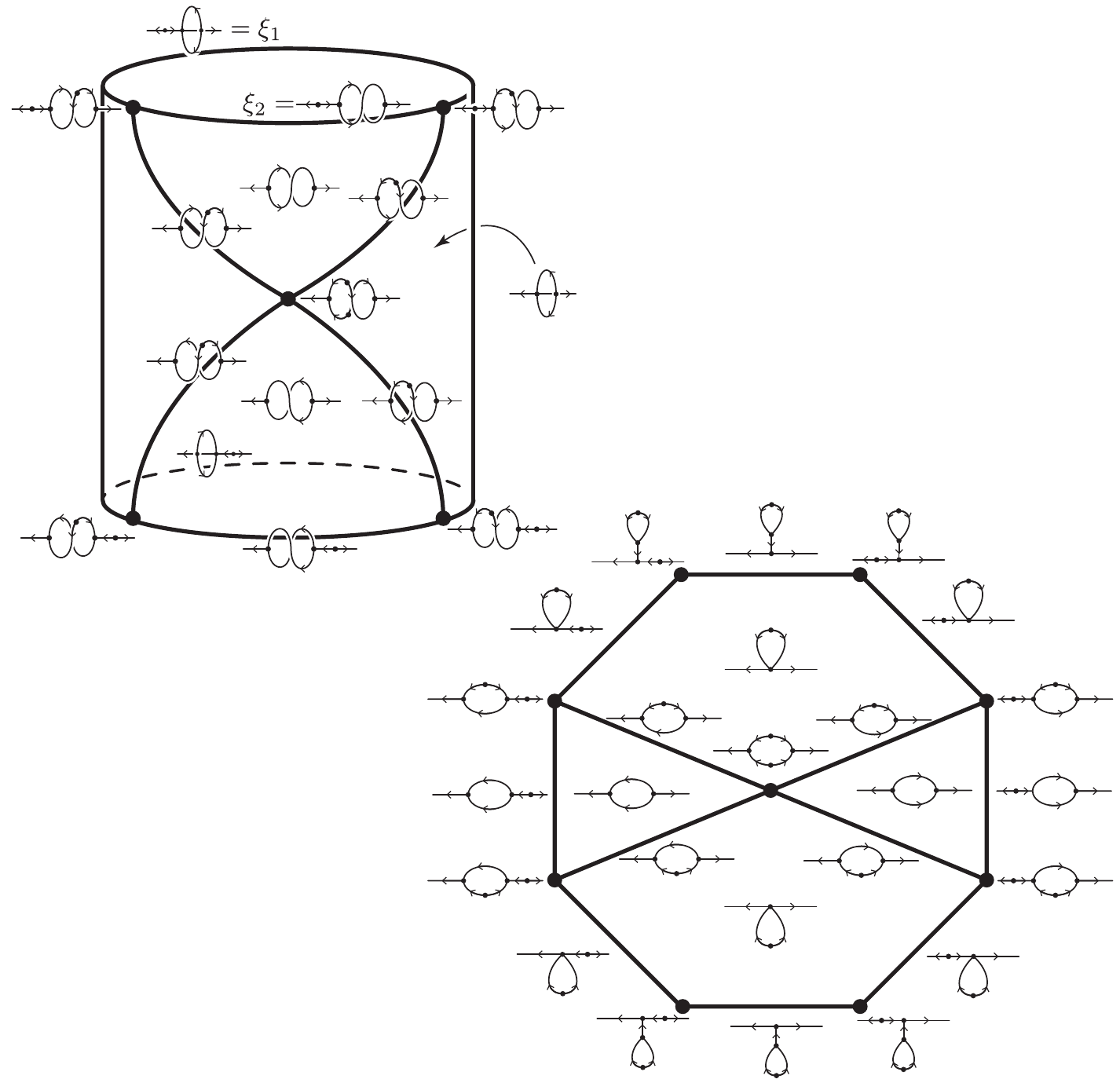}
\]
\caption{The space $(\mc F_{dioperad}^{properad}(\bD\mc V^{(-d)}))^{\text{[}genus=1\text{]}}(2,0)$ is given as a union of a decomposed cylinder (union of two triangles and one hegaxon; see upper left) with a decomposed disk (union of two triangles and two pentagons; see lower right). (The labelings of each directed graph are suppressed for better readability; they are always $1$ on the left root and $2$ on the right root of a graph.) The non-trivial homology class is represented, for example, by the cycle $\xi_1+\xi_2$.}\label{FIG:FDV1(2,0)}
\end{figure}
\end{proof}

\subsection{Antisymmetric co-inner products}\label{SEC:W-dioperad}

Recall our main example of a  $\mc V^{(d)}$-algebra being the cohomology $H^\bullet(X)$ of an even dimensional, oriented manifold $X$ with its usual cup product and with the image of the Thom class $c=\sum_i c'_i\otimes c''_i\in H^\bullet(X)\otimes H^\bullet(X)$. If $X$ is an odd dimensional manifold, then the Thom class $c$ is anti-symmetric,\[\sum_i c'_{i}\otimes c''_{i}= (-1)\cdot \sum_i (-1)^{|c'_i||c''_i|}\cdot c''_i\otimes c'_i.\]
Thus $H^\bullet(X)$ with its cup product and Thom class image is an algebra over the dioperad $\mc W^{(d)}$ which is an antisymmetric version of $\mc V^{(d)}$, defined below.

Explicitly, we define $\mc W^{(d)}=\langle E_{\mc W^{(d)}},R_{\mc W^{(d)}}\rangle$ to be generated by $E_{\mc W^{(d)}}(1,2)=\kk\cdot \mu\oplus \kk\cdot \bar \mu$ with right $\Ss_2$-action $\sigma.\mu=\bar \mu$ interchanging $\mu$ and $\bar \mu$, just as in $\mc V^{(d)}$ (see Definition \ref{DEF:V-dioperad0}), and $E_{\mc W^{(d)}}(2,0)=\kk[-d]\cdot \nu$ be concentrated in degree $d$ with left $\Ss_2$-action $\sigma.\nu=-\nu$. The space of relations is again spanned by associativity of $\mu$ and invariance of $\nu$, i.e. $\mu _1\circ_1\mu=\mu _2\circ_1\mu$, and $\mu _1\circ_2\nu=\mu _2\circ_1 \nu$ (see Figure \ref{FIG:V-relations}).

It would be interesting to find explicit expressions for $\mc W^{(d)}(m,n)$ and prove its Koszulness, just as we did for $\mc V^{(d)}$. Note, that there are elements $\gamma_{\llbracket\alpha,\ooo,\iii\rrbracket}\in \mc W^{(d)}(m,n)$ which span all of $\mc W^{(d)}(m,n)$ via the same proof as in Proposition \ref{PROP:V(m,n)-as-gammas} \eqref{ITM:V(m,n)=gammas}. However, due to sign reasons some of the $\gamma_{\llbracket\alpha,\ooo,\iii\rrbracket}$ might be zero, as for example:
\[
\includegraphics[scale=.85]{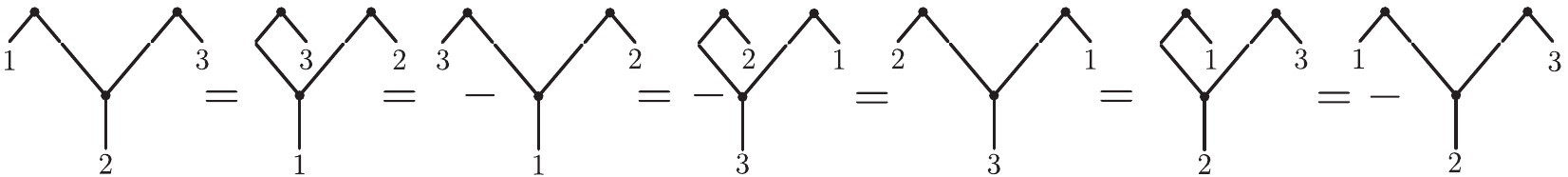}
\]
Thus, for $2\neq 0$ we get $\gamma_{(\oo\oo\oo)}=0$, and so $\mc W^{(d)}(3,0)=\{0\}$. In general, 
the chain complex
$\mc W^{(d)}(m,n)$ is a quotient of $\mc V^{(d)}(m,n)$, where some $\gamma_{\llbracket\alpha,\ooo,\iii\rrbracket}$ are set equal to zero.

\begin{question}
Explicitly describe $\mc W^{(d)}(m,n)$ combinatorially.
\end{question}
\begin{question}\label{Q:W-Koszul-properad}
Is $\mc W^{(d)}$ Koszul as a dioperad?
\end{question}

We note that for our main example $H^\bullet(X)$ of an odd dimensional, orientable manifold $X$, the composition of the co-inner product with the product is zero, since $\sum_i c'_i\cdot c''_i$ is the Euler class, which vanishes for odd dimensional manifolds. Thus, $H^\bullet(X)$ is an algebra over the properad $\mc F_{dioperad}^{properad}(\mc W^{(d)})_\diamond$ with additional diamond relation $\mu \circ \nu=0$.

{
\bibliographystyle{amsalpha}
\bibliography{VinftyKoszuality-arXiv}}

\end{document}